\newcommand{\injto}{\hookrightarrow}
\newcommand{\longto}{\longrightarrow}
\newcommand*\rel@kern[1]{\kern#1\dimexpr\macc@kerna}
\newcommand*\widebar[1]{%
  \begingroup
  \def\mathaccent##1##2{%
    \rel@kern{0.8}%
    \overline{\rel@kern{-0.8}\macc@nucleus\rel@kern{0.2}}%
    \rel@kern{-0.2}%
  }%
  \macc@depth\@ne
  \let\math@bgroup\@empty \let\math@egroup\macc@set@skewchar
  \mathsurround\z@ \frozen@everymath{\mathgroup\macc@group\relax}%
  \macc@set@skewchar\relax
  \let\mathaccentV\macc@nested@a
  \macc@nested@a\relax111{#1}%
  \endgroup
}
\DeclareMathOperator\Sym{Sym}
\DeclareMathOperator\Aut{Aut}
\DeclareMathOperator\Diff{Diff}
\DeclareMathOperator\Isom{Isom}
\DeclareMathOperator\Hom{Hom}
\DeclareMathOperator\Spec{Spec}
\DeclareMathOperator\Pic{Pic}
\DeclareMathOperator\GL{GL}
\DeclareMathOperator\SL{SL}
\DeclareMathOperator\Gal{Gal}
\DeclareMathOperator\Sch{Sch}
\DeclareMathOperator\Mod{Mod}
\DeclareMathOperator\Ab{Ab}
\DeclareMathOperator\sgn{sgn}
\def\bQ{{\mathbf{Q}}} \def\bZ{{\mathbf{Z}}} 
\def\bF{{\mathbf{F}}}  \def\bR{{\mathbf{R}}}
\def\bC{{\mathbf{C}}}  \def\bP{{\mathbf{P}}}
\def\cO{{\mathcal{O}}}  \def\cC{{\mathcal{C}}}
 \def\cE{{\mathcal{E}}} \def\cL{{\mathcal{L}}} 
\def\cM{\mathcal M} \def\cI{\mathcal I} 
\def\cH{\mathcal H} \def\cD{{\mathcal D}} \def\cQ{{\mathcal{Q}}}
\def\cA{{\mathcal A}}
\def\rB{{\mathrm B}} \def\rR{{\rm R}} \def\rH{{\rm H}} 
\def\rL{{\mathrm L}}
\def\id{{\rm id}}
\def\et{{\rm et}}
\def\an{{\rm an}}
\def\tors{{\rm tors}}
\theoremstyle{plain}
\newtheorem{theorem}{Theorem}
\newtheorem{corollary}{Corollary}
\newtheorem{lemma}{Lemma}
\newtheorem{proposition}{Proposition}
\theoremstyle{definition}
\newtheorem{definition}{Definition}
\newtheorem{example}{Example}
\newtheorem{remark}{Remark}
\begin{document}

\title
 {Characteristic classes for curves of genus one}

\author
 {Lenny Taelman}

\maketitle

\begin{abstract}We compute the cohomology of the stack $\cM_1$ over $\bC$ with coefficients in $\bZ[\frac{1}{2}]$, and in low degrees with  coefficients in $\bZ$. Cohomology classes on $\cM_1$ give rise to \emph{characteristic classes}, cohomological invariants of families of curves of genus one. We prove a number of vanishing results for  those characteristic classes, and give  explicit examples of families with non-vanishing characteristic classes. 
\end{abstract}

\section{Introduction and statement of the results}

\subsection{The cohomology of $\cM_1$}
We denote by $\cM_{1}$  the algebraic stack of curves of genus one and by $\cM_{1,1}$ the algebraic stack of elliptic curves, both over $\bC$. See \S \ref{sec:curves} for more details. If $X$ is an algebraic stack of finite type over $\bC$ then we denote by $X^\an$ its analytification and by  $\rH^\bullet(X^\an,-)$ its singular cohomology.

Consider the map $J\colon \cM_{1}\to\cM_{1,1}$, sending a curve to its Jacobian, and its Leray spectral sequence
\begin{equation}\label{eq:intro-leray}
	E_2^{p,q} = \rH^p(\cM_{1,1}^\an, \,\rR^q J_\ast \bZ ) \Longrightarrow \rH^{p+q}(\cM_1^\an,\bZ).
\end{equation}
The fibers of $J$ are classifying spaces of rank two tori, and one has
\[
	\rR^q J_\ast \bZ = \begin{cases}
		\Sym^k \rR^1 \pi_\ast \bZ & (q=2k) \\
		0 & (\text{$q$ odd}),
	\end{cases}
\]
where $\pi\colon \cE \to \cM_{1,1}$ is the universal elliptic curve. The upper-half plane is contractible, and since it is a universal covering of $\cM_{1,1}^\an$ with covering group $\SL_2 \bZ$, the cohomology of local systems on $\cM_{1,1}$ can be expressed in terms of group cohomology for $\SL_2 \bZ$. One finds:
\[
	E_2^{p,q} = \begin{cases}
		\rH^p(\SL_2 \bZ,\, \Sym^k (\bZ^2)) & (q=2k) \\
		0 & (\text{$q$ odd}),
	\end{cases}
\]
where $\bZ^2$ is the standard representation of $\SL_2 \bZ$.

The group $\SL_2 \bZ$ has a free subgroup of index $12$, so if $M$ is an $\SL_2 \bZ$-module on which $6$ is invertible, then $\rH^\bullet(\SL_2 \bZ, M)$ is concentrated in degrees $0$ and $1$. It immediately follows that the spectral sequence (\ref{eq:intro-leray}) tensored with $\bZ[\frac{1}{6}]$ degenerates at $E_2$. We show that it degenerates already with $\bZ[\frac{1}{2}]$-coefficients, and moreover, that the filtration on cohomology splits. In other words:

\begin{theorem} \label{thm:one}
$\rH^n( \cM_1^\an,\, \bZ[\frac{1}{2}] ) = \bigoplus_{p+2k=n} \rH^p( \SL_2 \bZ,\, \Sym^k (\bZ[\frac{1}{2}]^2) )$.
\end{theorem}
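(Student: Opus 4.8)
The plan is to realize the spectral sequence (\ref{eq:intro-leray}) as the Serre spectral sequence of an explicit fibration and to split it by means of a single universal class. Since the upper half-plane is contractible, $\cM_1^\an$ is the Borel construction $(\bC\bP^\infty\times\bC\bP^\infty)_{h\SL_2\bZ}$, where the fiber $BE=K(\bZ^2,2)=\bC\bP^\infty\times\bC\bP^\infty$ of $J$ has $\SL_2\bZ$ acting through the standard representation on $\bZ^2$, and $\rH^{2k}(BE)\cong\Sym^k(\bZ^2)$. Thus (\ref{eq:intro-leray}) is the multiplicative Serre spectral sequence of this fibration, with fiber cohomology the polynomial Hopf algebra on two primitive degree-$2$ generators. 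As the odd rows vanish, $d_2=0$ and only the odd differentials $d_3,d_5,\dots$ can be nonzero; rationally (indeed over $\bZ[\tfrac16]$) these vanish for degree reasons, so the whole problem is $\{2,3\}$-primary.

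The first step is to produce a global lift of the fiber fundamental class. The Jacobian map $J$ has a section $s\colon\cM_{1,1}\to\cM_1$ (send an elliptic curve, viewed as a genus-one curve, to its Jacobian), and more structurally $\cM_1\cong B(\cE/\cM_{1,1})$, the relative classifying stack of the universal elliptic curve, a commutative group stack over $\cM_{1,1}$ with $s$ as its identity section. Being a fiberwise $K(L,2)$ for the local system $L=\rR^1\pi_\ast\bZ$, it carries a tautological class $u\in\rH^2(\cM_1^\an,J^\ast L)$ restricting to the identity in $\End(L)=\rH^2(BE,L)$ on each fiber, i.e. the equivariant fundamental class. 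Its existence means that the twisted transgression of the fiber class, namely the $k$-invariant $\kappa\in\rH^3(\cM_{1,1}^\an,L)$, vanishes; hence $d_3=0$, and since $\kappa=0$ and the fiber cohomology is multiplicatively generated in degree $2$, all higher differentials vanish as well. So the spectral sequence degenerates at $E_2$, and one checks that this already holds over $\bZ$, so that inverting $2$ is needed only for the splitting.

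Next I would split the resulting filtration using the powers $u^k\in\rH^{2k}(\cM_1^\an,J^\ast\Sym^kL)$, which restrict to the generators of $\rH^{2k}(BE)\cong\Sym^k(\bZ^2)^\vee$. Cup product with $J^\ast(-)$, followed by the contraction $\Sym^k(\bZ^2)^\vee\otimes\Sym^k(\bZ^2)\to\bZ$ and the self-duality $(\bZ^2)^\vee\cong\bZ^2$ of the standard representation, yields maps $\Phi_k\colon\rH^p(\SL_2\bZ,\Sym^k(\bZ^2))\to\rH^{p+2k}(\cM_1^\an)$. By construction $\Phi_k$ induces the identity on the associated graded up to the contraction pairing, so the assembled map is an isomorphism onto each graded piece wherever that pairing is invertible; collecting the $\Phi_k$ then gives the desired direct-sum decomposition.

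The main obstacle is the integrality of this splitting. The contraction pairing on $\Sym^k$ of the standard representation has discriminant $\prod_{j}j!(k-j)!$, divisible by small primes, so the naive splitting only works after inverting those primes. The saving structural facts are that $\rH^\bullet(\SL_2\bZ,-)$ carries only $2$- and $3$-primary torsion in positive degrees (index-$12$ free subgroup; the amalgam $\SL_2\bZ=\bZ/4\ast_{\bZ/2}\bZ/6$), so everything localizes at the two elliptic points, and that inverting $2$ makes the central $-I$ act semisimply, annihilating $\Sym^k(\bZ^2)$ for odd $k$ and removing half of the potential extensions. What remains, namely showing that the surviving $3$-primary extensions among the even pieces split, so that one inverts $2$ rather than $6$, is the delicate heart of the argument; I expect it to require a direct analysis at the order-$3$ elliptic point ($j=0$) rather than the formal pairing above, with the genuine $2$-primary obstruction at the order-$4$ point ($j=1728$) explaining why the prime $2$ cannot be recovered.
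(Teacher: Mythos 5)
There is a genuine gap, in fact two, and they sit exactly at the content of the theorem. First, your degeneration argument is unsound. The tautological class $u$ lives in $\rH^2(\cM_1^\an,\,J^\ast L)$, i.e.\ in cohomology with \emph{twisted} coefficients; it is not an element of the constant-coefficient Serre spectral sequence, and the Leibniz-rule argument you invoke would require the $E_2$-page to be multiplicatively generated by the bottom row together with the column $p=0$. But that column vanishes in positive fiber degrees: $E_2^{0,2k}=\rH^0(\SL_2\bZ,\,\Sym^k(\bZ^2))=0$ for $k\geq 1$, since the standard representation has no nonzero invariants. So vanishing of the $k$-invariant (equivalently, the existence of the section) says nothing about the differentials leaving $E_r^{p,2k}$ for $p>0$. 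The inference ``split fibration in fiberwise Eilenberg--MacLane spaces $\Rightarrow$ collapse'' is in fact false: for fiberwise $K(\Lambda,1)$'s, i.e.\ for the Lyndon--Hochschild--Serre spectral sequence of the \emph{split} extensions $\bZ^n\rtimes\bZ/m$, collapse fails by the Langer--L\"uck counterexample \cite{LangerLueck12} to the Adem--Ge--Pan--Petrosyan conjecture — the very reference the paper cites to warn that $\rR\pi_\ast\bZ$ does not split integrally in general. Your stronger claim that the spectral sequence degenerates over $\bZ$ in all degrees is precisely what the paper states is \emph{not} known. Note also that degeneration is not formal even over $\bZ[\frac{1}{2}]$: the $3$-torsion classes (e.g.\ $\bZ/3\subset\rH^{2p}(\SL_2\bZ,\bZ)$ for all $p$) keep infinitely many columns alive, so degeneration at the prime $3$ is part of what must be proved.

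Second, your splitting maps $\Phi_k$ do not do the job at the only prime that matters. On the associated graded, $\Phi_k$ is multiplication by the elementary divisors of the contraction pairing $\Sym^k(\bZ^2)^\vee\otimes\Sym^k(\bZ^2)\to\bZ$, which are binomial-type integers divisible by $3$ as soon as $k\geq 3$. Since all torsion in $\rH^{\geq 1}(\SL_2\bZ,-)$ is $2$- and $3$-primary and you have inverted only $2$, the prime $3$ is the entire content of the theorem; your construction only recovers the easy $\bZ[\frac{1}{6}]$-statement, and you yourself flag the $3$-primary analysis as missing. The paper resolves both issues at once by a different mechanism: it proves that the whole complex $\rR J_\ast\bZ[\frac{1}{2}]$ splits as $\bigoplus_k(\Sym^k\cH)[-2k]$ in the derived category of $\SL_2\bZ$-modules (Theorem \ref{thm:torus-split}). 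This is done simplicially: $\cM_1$ is the bar construction of $\cE/\cM_{1,1}$, the pushforward from the torus itself is split by an explicit antisymmetrization quasi-isomorphism (Proposition \ref{prop:equivariant-group-cohomology}, which in rank $2$ costs only $\frac{1}{2!}=\frac{1}{2}$), and the resulting cosimplicial complexes are identified via Quillen's theorem $\rL\!\wedge^k(\cH[-1])\cong(\Sym^k\cH)[-k]$ from \cite{Illusie71}. A splitting in the derived category yields degeneration \emph{and} the splitting of the filtration simultaneously, with no pairing and hence no factorial denominators — which is exactly how the prime $3$ is handled.
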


We actually prove a more general result about the cohomology of bundles of classifying spaces of tori of arbitrary dimension. The proof is of simplicial nature, and relies on a theorem of Quillen on derived exterior powers. See section \ref{sec:splitting}, and Theorem \ref{thm:torus-split} in particular.

Together with the recent computation of $\rH^\bullet( \SL_2 \bZ,\, \Sym^k(\bZ^2) )$ by Callegaro, Cohen and Salvetti \cite{CallegaroCohenSalvetti13}, Theorem \ref{thm:one} gives a complete description of the cohomology of $\cM_1^\an$ with coefficients in $\bZ[\frac{1}{2}]$.

For small $n$ one can completely compute $\rH^n(\cM_1^\an,\, \bZ)$. Note that the forgetful map $\cM_{1,1}\to \cM_{1}$ is a section to $J$ so that for any group $A$ the map $J^\ast \colon \rH^\bullet(\cM_{1,1}^\an,A) \to \rH^\bullet(\cM_{1}^\an,A)$ is split injective. We denote the resulting complement of the image by $\rH^\bullet(\cM_{1}^\an,A)^\dagger$.

\begin{theorem}\label{thm:low-degree}
We have
$ \rH^\bullet(\cM_{1}^\an,\bZ) = 
	\rH^\bullet(\cM_{1,1}^\an,\bZ) \oplus \rH^\bullet(\cM_{1}^\an,\bZ)^\dagger$
and for $n<6$ these groups are as follows:
\[
 \renewcommand{\arraystretch}{1.4}
\begin{tabular}{ c || c | c | c | c | c | c   } % | c | c }
$n$ & $0$ & $1$ & $2$ & $3$ & $4$ & $5$  \\ % $8$ & $9$ \\
  \hline                        
$\rH^n(\cM_{1,1}^\an,\bZ)$ &
	$\bZ$ &
	$0$ & 
	$\bZ/12$ &
	$0$ & 
	$\bZ/12$ &
	$0$ \\
$\rH^n(\cM_1^\an,\bZ)^\dagger$ & 
	$0$ & 
	$0$ & 
	$0$ &
	$0$ &
	$\bZ/2$ &
	$\bZ/2 \oplus \bZ$ 
\end{tabular}
\]
\end{theorem}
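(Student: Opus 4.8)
The plan is to read everything off the Leray spectral sequence (\ref{eq:intro-leray}). The asserted splitting is exactly the section argument already recorded before the statement: the forgetful map $\cM_{1,1}\to\cM_1$ is a section of $J$, so $J^\ast$ is split injective and the bottom row $E_2^{\bullet,0}=\rH^\bullet(\SL_2\bZ,\bZ)$ survives to $E_\infty$ untouched, recovering $\rH^\bullet(\cM_{1,1}^\an,\bZ)$ as a direct summand. The first row of the table is then simply $\rH^\bullet(\SL_2\bZ,\bZ)$, which I would compute from the amalgam $\SL_2\bZ\cong\bZ/4 \ast_{\bZ/2}\bZ/6$: Mayer--Vietoris with trivial coefficients reduces it to the (periodic) cohomology of $\bZ/4$, $\bZ/6$ and $\bZ/2$, giving $\bZ,0,\bZ/12,0,\bZ/12,0$ in degrees $0,\dots,5$. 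The $\dagger$-part is whatever comes from the rows $q\ge 2$. Because the odd rows of (\ref{eq:intro-leray}) vanish, $d_2=0$; and because $J$ has a section, no differential can hit the bottom row. Hence the first differential that can affect anything is $d_3\colon E_3^{p,q}\to E_3^{p+3,q-2}$ \emph{between even rows}.

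Next I would list the finitely many $E_2$-terms of total degree $<6$ lying off the bottom row. These are $E_2^{p,2}=\rH^p(\SL_2\bZ,\bZ^2)$ for $p\le 3$ and $E_2^{p,4}=\rH^p(\SL_2\bZ,\Sym^2\bZ^2)$ for $p\le 1$, and I would compute them integrally by the same amalgam Mayer--Vietoris (or by quoting \cite{CallegaroCohenSalvetti13}), using the explicit generators $S=\left(\begin{smallmatrix}0&-1\\1&0\end{smallmatrix}\right)$ of $\bZ/4$ and $ST=\left(\begin{smallmatrix}0&-1\\1&1\end{smallmatrix}\right)$ of $\bZ/6$. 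On $\bZ^2$ the center $\{\pm1\}$ acts by $-1$, while $ST-1$ is invertible over $\bZ$, so $\rH^\bullet(\bZ/6,\bZ^2)=0$ and the whole computation collapses onto the injection $\rH^{\mathrm{odd}}(\bZ/4,\bZ^2)=\bZ/2\hookrightarrow\rH^{\mathrm{odd}}(\bZ/2,\bZ^2)=(\bZ/2)^2$. This yields $\rH^0(\SL_2\bZ,\bZ^2)=\rH^1(\SL_2\bZ,\bZ^2)=\rH^3(\SL_2\bZ,\bZ^2)=0$ and $\rH^2(\SL_2\bZ,\bZ^2)=\rH^4(\SL_2\bZ,\bZ^2)=\bZ/2$. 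On $\Sym^2\bZ^2$ the center acts trivially; here $\rH^0=0$, and $\rH^1(\SL_2\bZ,\Sym^2\bZ^2)$ has free rank $1$ by Eichler--Shimura (no cusp forms of weight $4$, a single Eisenstein class) plus some $2$-power torsion.

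Feeding this back in, the $\dagger$-contributions of total degree $<6$ sit only at $(p,q)=(2,2),(3,2),(1,4)$. Since $\rH^1(\SL_2\bZ,\bZ^2)=0$, degree $3$ contributes nothing, matching the table. The term $E^{2,2}=\rH^2(\SL_2\bZ,\bZ^2)=\bZ/2$ receives and supports no differential, so $\rH^4(\cM_1^\an,\bZ)^\dagger=\bZ/2$. Because $\rH^3(\SL_2\bZ,\bZ^2)=0$, the term $E^{3,2}$ vanishes and the entire degree-$5$ $\dagger$-part comes from $E^{1,4}=\ker\!\big(d_3\colon \rH^1(\SL_2\bZ,\Sym^2\bZ^2)\to \rH^4(\SL_2\bZ,\bZ^2)=\bZ/2\big)$.

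The one genuine obstacle is therefore this single $d_3$ together with the $2$-primary structure of $\rH^1(\SL_2\bZ,\Sym^2\bZ^2)$. Here Theorem \ref{thm:one} does the essential work: after inverting $2$ the sequence degenerates, so $d_3$ is valued in $2$-torsion and the rank-$1$ Eisenstein class necessarily survives, producing the free $\bZ$ summand of $\rH^5(\cM_1^\an,\bZ)^\dagger$. To obtain the remaining $\bZ/2$ I would compute the $2$-primary part of $\rH^1(\SL_2\bZ,\Sym^2\bZ^2)$ explicitly from the amalgam and evaluate $d_3$ either directly or via the interpretation of the higher differentials as transgression/characteristic classes of the $BE$-bundle underlying Theorem \ref{thm:torus-split}; in the end $E_\infty^{1,4}=\bZ\oplus\bZ/2$. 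All extension problems in this range are trivial, the nonzero graded pieces being copies of $\bZ$ and $\bZ/2$ in distinct filtration weights, so assembling the columns reproduces the stated table.
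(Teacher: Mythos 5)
Your setup coincides with the paper's (split the Leray spectral sequence of $J$ by means of the forgetful section, then inspect the finitely many dagger $E_2$-terms of total degree $<6$), and your $E_2$-computations via the amalgam $\bZ/4\ast_{\bZ/2}\bZ/6$ are correct; in particular the entries for $n\leq 4$ of the table follow unconditionally, since $E^{2,2}$ can neither receive nor support any differential inside the dagger summand. The genuine gap is exactly at the point you yourself call ``the one genuine obstacle'': you never actually show that $d_3\colon E_3^{1,4}\to E_3^{4,2}\cong\bZ/2$ kills the torsion class. Since $\ker(d_3)\cong\bZ\oplus\bZ/2$ if $d_3$ vanishes on the $\bZ/2$-summand but $\ker(d_3)\cong\bZ$ if it does not, this single evaluation is precisely what separates the stated degree-$5$ entry from a wrong one; writing ``evaluate $d_3$ either directly or via \dots; in the end $E_\infty^{1,4}=\bZ\oplus\bZ/2$'' assumes the conclusion. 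Moreover, neither of your suggested routes works as stated. Theorem \ref{thm:one} gives no information here: the target $E_3^{4,2}=\rH^4(\SL_2\bZ,\bZ^2)$ is already annihilated by inverting $2$, so degeneration with $\bZ[\frac{1}{2}]$-coefficients constrains nothing about this $d_3$ (it only re-proves the automatic fact that the free rank survives, since any homomorphism $\bZ\oplus\bZ/2\to\bZ/2$ has kernel of free rank one). And the transgression/multiplicativity idea fails because $E_2^{0,2}=\rH^0(\SL_2\bZ,\bZ^2)=0$: the $q=4$ row is not generated by products of $q=2$ classes with bottom-row classes, so the Borel-type description of differentials in a $\rB T$-bundle does not determine $d_3$ on $E^{1,4}$.

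The paper closes this gap with an input you do not use: the independent computation of $\rH^\bullet(\cM_1^\an,\bZ/2\bZ)$ by Furusawa--Tezuka--Yagita. Comparing $\bF_2$-dimensions, via the relation $\dim_{\bF_2}\rH^n(\cM_1^\an,\bZ/2\bZ)^\dagger=\dim_{\bF_2}\bigl(\rH^n(\cM_1^\an,\bZ)^\dagger\otimes\bF_2\bigr)+\dim_{\bF_2}\rH^{n+1}(\cM_1^\an,\bZ)^\dagger[2]$, with the upper bound furnished by the $E_2$-page forces all differentials in this range to vanish and the filtrations to split: any nonzero differential (in particular your $d_3$ on the torsion class) would make the mod-$2$ cohomology strictly smaller than the Furusawa--Tezuka--Yagita answer. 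To complete your proof you must either import this comparison or supply some other genuine evaluation of $d_3$ on the $2$-torsion; as written, the key degree-$5$ entry of the table is asserted rather than proved.
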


See Theorem \ref{thm:low-degree-repeated} in section \ref{sec:low-degree} for a more extensive table for $n\leq 9$.
This theorem is shown by analyzing the spectral sequence (\ref{eq:intro-leray}), and comparing it with the computations of \cite{CallegaroCohenSalvetti13} and \cite{Furusawa88}. It turns out that in low degree the spectral sequence (\ref{eq:intro-leray}) degenerates at $E_2$, but it is not clear if this happens in arbitrary degree.

\begin{remark}
Since the table in Theorem \ref{thm:low-degree} may lead the reader into suspecting that $\rH^\bullet(\cM_{1}^\an,\,\bZ)$ contains only $2$ and $3$-power torsion, we briefly point out that in fact it contains $p$-torsion for all primes $p$. Indeed, if $p$ is a prime then
\[
	X^pY-Y^pX\in \bF_p[X,Y]
\]
is invariant under $\SL_2 \bF\!{}_p$, so 
$\rH^0( \SL_2 \bZ,\, \Sym^{p+1} (\bF_{\!p}^2) )$ is non-zero. It follows that 
\[
	\rH^1(\SL_2 \bZ,\, \Sym^{p+1} (\bZ^2) ),
\]
contains $p$-torsion, and by Theorem \ref{thm:one} we conclude that for any prime $p>2$ the group $\rH^{2p+3}( \cM_1^\an, \bZ)$ has $p$-torsion. See \cite{Furusawa88} for a complete description of the $p$-power torsion of $\rH^\bullet(\cM_{1}^\an,\,\bZ)$ for $p\geq 5$.
\end{remark}

\begin{remark}By Shimura \cite{Shimura59}  cohomology of $\SL_2 \bZ$  is related to modular forms of level $1$, and indeed, it was already observed by Morita \cite{Furusawa88} that the spectral sequence (\ref{eq:intro-leray})  implies
\[
	\rH^n( \cM_1^\an,\, \bC ) = \begin{cases}
		\bC & (n=0) \\
		E_k \oplus S_k \oplus \bar S_k & (\text{$n=2k-3$})\\
		0 & (\text{$n>0$ even}), \end{cases}
\]
where $S_k$ is the complex vector space of level $1$ and weight $k$ cusp forms, $\bar S_k$ its complex conjugate vector space and
$E_k$ the space (of dimension zero or one) generated by the Eisenstein series of weight $k$. 
See \cite[\S 4.3]{Behrend03} for an $\ell$-adic version. 
\end{remark}

\begin{remark}\label{rmk:teichmuller} Earle and Eells \cite[\S 10]{EarleEells69} have shown that the space of complex structures on an oriented differentiable $2$-torus $T$ that are compatible with the given orientation is contractible. It follows that (the geometric realization of the topological category) $\cM_{1}^\an$ is homotopy equivalent with the classifying space of the group $\Diff_+ T$ of orientation-preserving self-diffeomorphisms of $T$ and therefore $
\rH^\bullet(\cM_{1}^\an, \bZ) = \rH^\bullet(\rB \Diff_+ T, \bZ)$.
\end{remark}

\subsection{Characteristic classes}

Let $A$ be an abelian group. Let $\gamma$ be a class in $\rH^\bullet(\cM_{1}^\an,A)$. A curve of genus one $C\to S$ corresponds to a map $f\colon S\to\cM_{1}$. If $S$ is of finite type over $\bC$ then we set
\[
	c_\gamma(C/S) := f^\ast \gamma \in \rH^\bullet(S^\an,A).
\]
In this way $\gamma$ defines a \emph{characteristic class} $c_\gamma$. By construction, we have that
$c_\gamma$ commutes with base change $S'\to S$. If moreover $\gamma$ lies in the dagger part $\rH^\bullet(\cM_1^\an,A)^\dagger$ then we also have $c_\gamma(C/S)=0$ if $C\to S$ has a section.

In the final sections, we study these characteristic classes $c_\gamma$ a bit more in detail.
For  characteristic zero coefficients we prove the following.

\begin{theorem}\label{thm:smooth-vanishing}
Let $\gamma \in \rH^\bullet(\cM_1^\an,\,\bQ)^\dagger$. 
\begin{enumerate}
\item If $S$ is a smooth scheme of finite type over $\bC$ and $C\to S$ is a curve of genus one then $c_\gamma(C/S)=0$;
\item If $\gamma \neq 0$ then there exists a scheme $S$ of finite type over $\bC$ and $C\to S$ such that $c_\gamma(C/S)\neq 0$.
\end{enumerate}
\end{theorem}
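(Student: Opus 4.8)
\emph{Overall strategy.} The plan is to handle both parts through a single mechanism attached to the fibration $J\colon\cM_1\to\cM_{1,1}$ and its fibrewise multiplication maps. Write $\mathcal V:=\rR^1\pi_\ast\bQ$ for the weight-one local system on $\cM_{1,1}$, and also for its pullback along a map $g\colon S\to\cM_{1,1}$ classifying an elliptic family $E\to S$. Since $\rH^\bullet(\SL_2\bZ,M_\bQ)$ is concentrated in degrees $0$ and $1$, and $\Sym^k(\bQ^2)$ has no $\SL_2\bZ$-invariants for $k\geq1$, Theorem \ref{thm:one} identifies the dagger part with $\bigoplus_{k\geq1}\rH^1(\SL_2\bZ,\Sym^k(\bQ^2))$, sitting in position $(p,q)=(1,2k)$ of (\ref{eq:intro-leray}); I may therefore assume $\gamma=\gamma_k$ is homogeneous of fibre-degree $2k$ with $k\geq1$. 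For a genus-one curve $C\to S$ with Jacobian $E\to S$ the curve is an $E$-torsor, with a topological class $c=[C]\in\rH^2(S^\an,\mathcal V_\bQ^\vee)$; unwinding the splitting of Theorem \ref{thm:torus-split} along $g$ gives the pairing description
\[
  c_{\gamma_k}(C/S)=\big\langle g^\ast\gamma_k,\; c^{\cup k}\big\rangle\in\rH^{2k+1}(S^\an,\bQ),
\]
induced by $\Sym^k\mathcal V\otimes\Sym^k\mathcal V^\vee\to\bQ$. In particular $c_{\gamma_k}$ is homogeneous of degree $k\geq1$ in $c$, and it vanishes whenever $C\to S$ has a section, since then $c=0$.

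\emph{Part (1).} The point is that over a smooth base $c$ is torsion. Assume $S$ smooth and irreducible with generic point $\eta$ and $K=\bC(S)$. The fibre $C_\eta$ is a nonempty curve over $K$, hence has a closed point of some degree $N\geq1$, giving an effective divisor $D_\eta$ on $C_\eta$ of degree $N$. Its closure $\bar D\subset C$ is a prime divisor dominating $S$; because the total space $C$ is smooth, $\bar D$ is Cartier, and since $C\to S$ is proper and flat the relative degree of $\cO_C(\bar D)$ is locally constant, hence equal to $N$ on all of $S$. Thus $\cO_C(\bar D)$ is a global line bundle of relative degree $N$, so $\Pic^N_{C/S}\to S$ has a section and $N[C]=0$ in the group of $E$-torsors; equivalently $[N]_\ast C$, the pushforward along $[N]\colon E\to E$, has a section. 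Let $\psi_N\colon\cM_1\to\cM_1$ be fibrewise multiplication by $N$ over $\cM_{1,1}$: on the fibre $\rB T^2$ it is induced by $[N]\colon T^2\to T^2$, which acts by $N$ on $\rH^2(\rB T^2;\bQ)\cong\mathcal V$ and hence by $N^k$ on $\Sym^k\mathcal V=\rH^{2k}(\rB T^2)$; as $\psi_N$ lies over $\cM_{1,1}$ it preserves the decomposition of Theorem \ref{thm:one} and acts on $\gamma_k$ by $N^k$. Since $\psi_N\circ f$ classifies $[N]_\ast C$, which has a section,
\[
  N^k\,c_{\gamma_k}(C/S)=f^\ast\psi_N^\ast\gamma_k=c_{\gamma_k}\big([N]_\ast C/S\big)=0,
\]
so $c_{\gamma_k}(C/S)=0$; summing over $k$ gives (1). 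The one delicate point is the \emph{global} torsion of $c$: a rational multisection of $C_\eta$ only trivializes $c$ generically, and the extra input making it global is the smoothness of the total space $C$, which turns $\bar D$ into an honest line bundle of constant relative degree.

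\emph{Part (2).} By (1) any witnessing base must be singular, so neither the Hodge-weight bound for smooth varieties nor the index argument above is available, and the class must be produced directly. Reducing again to $\gamma=\gamma_k\neq0$, the pairing formula shows it suffices to exhibit a finite-type scheme $S$ with a nonconstant elliptic family $E\to S$ for which $g^\ast\gamma_k\neq0$ in $\rH^1(S^\an,\Sym^k\mathcal V)$, together with a genus-one torsor $C$ whose class $c\in\rH^2(S^\an,\mathcal V^\vee)$ is non-torsion and satisfies $\langle g^\ast\gamma_k,\,c^{\cup k}\rangle\neq0$. As a source of the base class I would begin from a fine modular curve $Y(\ell)$ (a smooth affine scheme) carrying the universal elliptic curve, where $\gamma_k$ pulls back to a nonzero element of $\rH^1\big(Y(\ell),\Sym^k\mathcal V_\bQ\big)$, and then thicken $Y(\ell)$ to a singular $S$ on which a non-torsion torsor class exists and can be paired against the modular class. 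I expect the main obstacle of the whole theorem to lie precisely here: over a necessarily singular $S$ there is no shortcut, so both the construction of the non-torsion torsor and the explicit nonvanishing of $\langle g^\ast\gamma_k,\,c^{\cup k}\rangle$ must be carried out by hand through the spectral sequence (\ref{eq:intro-leray}) and the splitting of Theorem \ref{thm:torus-split}.
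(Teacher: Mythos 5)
Your part (1) is correct, and it takes a genuinely different route from the paper's. The paper invokes Raynaud's theorem (Theorem \ref{thm:Raynaud}) to get that $C$ has finite order $N$ in $\rH^1(S_\et,E)$, trivializes $C$ on a finite Galois \'etale cover $T\to S$, and concludes from the injectivity of $\rH^\bullet(S^\an,\bQ)\to\rH^\bullet(T^\an,\bQ)$. You instead reprove the finite-order statement by the index argument (the closure of a degree-$N$ closed point of $C_\eta$ is Cartier on the regular total space, giving a section of $\Pic^N_{C/S}$, hence $N[C]=0$), and then replace the covering/transfer step by the endomorphism $\psi_N$ of $\cM_1$ over $\cM_{1,1}$ acting by $N^k$ on fibre-degree-$2k$ classes. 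The eigenvalue claim is legitimate even though the paper never asserts the splitting of Theorem \ref{thm:one} to be natural: with $\bQ$-coefficients the dagger part of $\rH^{2k+1}$ \emph{is} the single Leray term $E_2^{1,2k}$ (all other rational terms vanish), so functoriality of the Leray spectral sequence for the map $\psi_N$ over $\cM_{1,1}$ already gives $\psi_N^\ast\gamma_k=N^k\gamma_k$. This is a nice alternative; note only that your Cartier/index argument silently reproves the part of Raynaud's theorem that the paper simply cites.

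Part (2) is a genuine gap: what you give is a plan, not a proof, and the plan as sketched would fail. The entire content of part (2) is the explicit construction of a (necessarily singular) $S$ and a family $C\to S$ with verified nonvanishing, and you explicitly defer exactly this step (``must be carried out by hand''). Moreover, the route you indicate cannot produce an example. If one thickens $Y(\ell)$ by a single singular direction, say $S=Y(\ell)\times X$ with $X$ the pinched $\bP^1$ and $C$ glued by a translation as in Example \ref{ex:infinite-order}, then the topological torsor class has the pure K\"unneth form $c=\delta(P)\otimes\xi$ with $\xi$ a generator of $\rH^1(X,\bQ)\cong\bQ$; since $X\simeq S^2\vee S^1$ one has $\xi\cup\xi=0$, hence $c^{\cup k}=0$ for every $k\geq 2$. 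But by the Callegaro--Cohen--Salvetti table $\rH^1(\SL_2\bZ,\Sym^1(\bQ^2))$ is torsion, so \emph{every} nonzero rational dagger class has $k\geq2$: a single nodal direction detects nothing. In addition, the sections of the universal elliptic curve supplied by a level structure on $Y(\ell)$ are torsion, so their Abel--Jacobi classes $\delta(P)\in\rH^1(Y(\ell),\,\rR^1\pi_\ast\bQ)$ vanish rationally and the pinching mechanism yields rationally trivial torsor classes there. These two obstacles are exactly what the paper's construction is designed to overcome: it takes the base $\cM_{1,k-1}\times X^{k-2}$ (paper's indexing, where $\gamma$ pairs with $\Sym^{k-2}$), so that there are $k-2$ independent nodal directions \emph{and} $k-2$ non-torsion, independent marked-point sections $P_1,\dots,P_{k-2}$; it then needs Lemma \ref{lemma:concrete-H2} together with the $\SL_2\bZ$-equivariant splitting of $0\to\Sym^{k-2}\rR^1\pi_\ast\bQ\to\rR^{2k-4}j_\ast\bQ\to\cQ\to0$ to conclude injectivity on $\rH^1(\cM_{1,1}^\an,-)$, and finally a level structure to replace the stacky base by a scheme. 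None of these steps is present in your proposal. (A smaller point: your pairing formula is asserted as an identity ``by unwinding the splitting of Theorem \ref{thm:torus-split}'', but that splitting is not shown to be natural in the base; what is available, and what suffices for detecting nonvanishing, is the graded statement coming from functoriality of the Leray filtration.)
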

For (2) we give an explicit construction of a (necessarily singular) $S$ and a family $C\to S$ with $c_\gamma(C/S) \neq 0$. Of course, if we allow $S$ to be an algebraic stack, then (2) becomes tautological since one can take $C\to S$ to be the universal curve $\cC\to \cM_1$.

\begin{remark}
The definition of the characteristic classes $c_\gamma(C/S)$ extends to families of genus one Riemann surfaces over complex analytic manifolds. If $S$ is a smooth, finite type scheme over $\bC$, and $C\to S^\an$ an analytic family of genus one Riemann surfaces, then by the first part of Theorem \ref{thm:smooth-vanishing} the characteristic classes $c_\gamma(C/S^\an)$  for $\gamma \in \rH^\bullet(\cM_1^\an,\,\bQ)^\dagger$ are obstructions against the algebraizability of $C\to S^\an$. 
\end{remark}

 The element of order $2$ in $\rH^4(\cM_1^\an,\bZ)^\dagger$ (see Theorem \ref{thm:low-degree}) determines a non-zero $\gamma \in \rH^3(\cM_1^\an,\bZ/2\bZ)$. In section \ref{sec:explicit-torsion-classes} we study the associated characteristic class in more detail. 
 
We construct a class
 \[
 	c(C/S) \in \rH^3(S_\et,\,\bZ/2\bZ)
\]
for every curve of genus one $C$ over a scheme $S$ over $\Spec \bZ[\frac{1}{2}]$, directly in terms of the geometry of $C\to S$, and show that $c=c_\gamma$ for $S$ of finite type over $\bC$. 

The class $c(C/S) \in \rH^3(S_\et,\,\bZ/2\bZ)$ is compatible with base change, and vanishes if $C\to S$ has a section. Being a torsion class, it need not vanish for curves over a smooth base $S$, and indeed we give an example of a smooth scheme $S$ of finite type over $\bC$ (of dimension $3$) and a genus one curve $C\to S$ such that $c(C/S)\neq 0$.  We end by showing that $c(C/S)=0$ if $S$ is the spectrum of a number field.

\subsection*{Acknowledgements} The author is grateful to Bhargav Bhatt, Niels uit de Bos, Bas Edixhoven,  H\'el\`ene Esnault, Bart de Smit and Wouter Zomervrucht for various discussions related to this paper.

The author is supported by a grant of the Netherlands Organisation for Scientific Research (NWO).

\section{Genus one curves and their jacobians}\label{sec:curves}

We recall a few basic results on curves of genus one and on the stacks $\cM_1$ and $\cM_{1,1}$, mostly for lack of proper reference. We also use this as an opportunity to fix  notation and terminology.

For the language of algebraic spaces and stacks, we use the conventions of the \emph{Stacks Project}. See in particular \cite[\href{http://stacks.math.columbia.edu/tag/025R}{Tag 025R}]{stacks-project} and \cite[\href{http://stacks.math.columbia.edu/tag/026N}{Tag 026N}]{stacks-project}.

\begin{definition} Let $S$ be a scheme. A \emph{curve of genus one} over $S$ is a proper smooth morphism of algebraic spaces $p\colon C \to S$,  whose geometric fibers are schemes which are irreducible curves of genus one.
\end{definition}

\begin{definition} Let $S$ be a scheme. An \emph{elliptic curve} over $S$ is a pair $(E,0)$ consisting of a proper smooth scheme $\pi \colon E\to S$ that is a curve of genus one, and a section $0\in E(S)$.
\end{definition}

Of course, an elliptic curve is canonically a group scheme over $S$.

\begin{lemma}\label{lemma:etale-locally-section}
 Let $C\to S$ be a curve of genus one. Then there exists an \'etale surjective $S' \to S$ such that the base change $C'\to S'$ has a section.
\end{lemma}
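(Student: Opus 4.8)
The plan is to forget essentially everything about genus one and retain only that $p\colon C\to S$ is smooth and surjective. Smoothness is part of the definition, and surjectivity holds because every geometric fiber is a nonempty curve (an irreducible one-dimensional variety), so that $p$ hits every point of $S$. The statement then becomes the purely formal assertion that a smooth surjective morphism of algebraic spaces acquires a section after a suitable étale surjective base change, and this is what I would prove.

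First I would reduce from algebraic spaces to schemes. Choosing a scheme $V$ with an étale surjection $V\to C$, the composite $V\to S$ is again smooth (since $V\to C$ is étale and $C\to S$ is smooth) and again surjective. As any section of $V\to S$ composed with $V\to C$ yields a section of $C\to S$, it suffices to treat the case where the total space is a scheme. The problem is moreover étale-local on $S$, so I would fix $s\in S$, produce an étale neighbourhood $(S',s')\to(S,s)$ over which $V_{S'}\to S'$ admits a section, and then take the disjoint union of such neighbourhoods as $s$ ranges over $S$ to obtain the desired étale surjection $S'\to S$.

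To build the local section I would pick a closed point $x$ in the nonempty smooth $\kappa(s)$-scheme $V_s$ whose residue field $\kappa(x)$ is finite separable over $\kappa(s)$; such a point exists because a smooth scheme over a field always has closed points with separable residue field. After replacing $S$ by an étale neighbourhood of $s$ realizing the finite separable extension $\kappa(x)/\kappa(s)$, I may assume $x$ is a $\kappa(s)$-rational point of the fiber $V_s$. Finally I would upgrade this single fiberwise rational point to an honest section via the local structure of smooth morphisms: near $x$ there are opens and an étale map $g\colon V\supseteq U\to \bA^n_S$, and after a translation one arranges $g(x)=0$. The pullback $T:=U\times_{\bA^n_S}S$ of the zero section is then étale over $S$ and contains $x$ with trivial residue extension, so $T\to S$ is an étale neighbourhood of $s$ and the composite $T\hookrightarrow U\to V$ is the sought section near $x$. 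Alternatively one may simply invoke that smooth morphisms admit sections étale-locally (e.g.\ via the Stacks Project \cite{stacks-project}).

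I expect the genuinely delicate points to be bookkeeping rather than conceptual. The one step requiring care is the choice of a closed point of the fiber with \emph{separable} residue field, since an inseparable extension could not be absorbed into an étale base change; and the descent through algebraic spaces, which is handled cleanly by the atlas reduction above. Extending a fiberwise rational point to a genuine section is precisely the infinitesimal-lifting content of smoothness and presents no real obstacle once the structure theorem for smooth morphisms is in hand.
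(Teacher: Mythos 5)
Your proof is correct and follows essentially the same route as the paper: pass to an \'etale atlas $U\to C$ to reduce to a smooth surjective morphism of schemes, then use that such a morphism acquires a section after \'etale surjective base change. The paper simply cites EGA IV 17.6.3 for this last fact, whereas you prove it by hand (separable closed point in the fiber plus the local structure of smooth morphisms) --- the content is the same.
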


\begin{proof} Since $C$ is an algebraic space, there exists a scheme $U$ and an \'etale surjective $U\to C$. The composition $U\to S$ is smooth and surjective, so that by \cite[17.6.3]{EGA-IV-4} there exists an \'etale surjective $S'\to S$ and a section $S'\to U':=S'\times_S U$. Composition with $U'\to C'$ gives the desired section.
\end{proof}

\begin{lemma}\label{lemma:finite-presentation}
Let $S$ be an affine scheme and let $C\to S$ be a curve of genus one. Then there exists a 
cartesian square
\[
\begin{tikzcd}
C \arrow{r} \arrow{d} & C_0 \arrow{d} \\
S \arrow{r} & S_0
\end{tikzcd}
\]
with $S_0$ affine of finite type over $\Spec \bZ$ and $C_0\to S_0$ a curve of genus one. If moreover $s:S\to C$ is a section, then $C_0\to S_0$ can be taken such that there is a section $s_0\colon S_0\to C_0$ inducing $s$.
\end{lemma}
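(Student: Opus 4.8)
The plan is a standard spreading-out (limit) argument. Write $S = \Spec A$ and express $A$ as the filtered colimit $A = \varinjlim_\lambda A_\lambda$ of its finitely generated $\bZ$-subalgebras; then $S = \plim_\lambda S_\lambda$ with $S_\lambda = \Spec A_\lambda$ affine and of finite type over $\Spec \bZ$, and with affine transition morphisms. Since $C\to S$ is proper (hence quasi-compact, separated, and so quasi-separated) and smooth (hence locally of finite presentation), it is of finite presentation, so the limit theory for algebraic spaces --- the analogue for algebraic spaces of EGA IV \S 8, as developed in the Stacks Project \cite{stacks-project} --- provides, for some index $\lambda_0$, an algebraic space $C_{\lambda_0}\to S_{\lambda_0}$ of finite presentation whose base change along $S\to S_{\lambda_0}$ is $C\to S$. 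Enlarging $\lambda_0$ if necessary, the same theory lets me descend the properties ``proper'' and ``smooth'', so I may assume $C_{\lambda_0}\to S_{\lambda_0}$ is proper and smooth.

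It remains to arrange that the geometric fibers are irreducible curves of genus one. For the proper smooth family $C_{\lambda_0}\to S_{\lambda_0}$ I consider the locus $U\subseteq S_{\lambda_0}$ of points $t$ over which the fiber is geometrically connected of dimension one and satisfies $\dim_{\kappa(t)} \rH^0(\cO) = 1$. Relative dimension is locally constant on the source, and by upper semicontinuity of fiber cohomology together with flatness the condition $h^0 = 1$ is open; hence $U$ is open. Over $U$ the fibers are smooth, proper, geometrically connected (so geometrically irreducible) curves with $\chi(\cO) = 0$, i.e. of genus one. The map $S\to S_{\lambda_0}$ factors through $U$, since every fiber of $C\to S$ satisfies these conditions.

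To keep the base affine I reduce to a finite level where the base already maps into $U$. Writing the closed complement $S_{\lambda_0}\setminus U$ as $V(I)$, the fact that $S\to S_{\lambda_0}$ lands in $U$ says exactly $I\cdot A = A$; a relation $\sum_j a_j f_j = 1$ with $f_j\in I$ involves only finitely many $a_j\in A$, which already lie in some $A_\mu$ with $\mu\geq\lambda_0$, and the relation holds in $A_\mu$ after enlarging $\mu$. Thus $S_\mu\to S_{\lambda_0}$ factors through $U$, and I set $S_0 := S_\mu$ and $C_0 := C_{\lambda_0}\times_{S_{\lambda_0}} S_0$. Then $S_0$ is affine of finite type over $\bZ$, the family $C_0\to S_0$ is a curve of genus one, and the square is cartesian by construction. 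For the section, a section $s\colon S\to C$ of $C\to S$ is the same datum as an $S_0$-morphism $S\to C_0$; since $C_0\to S_0$ is of finite presentation and $S = \plim_{\mu'} S_{\mu'}$ over $S_0$, this morphism descends to an $S_0$-morphism $S_{\mu'}\to C_0$ for some $\mu'$, i.e. to a section of $C_0\times_{S_0} S_{\mu'}\to S_{\mu'}$ inducing $s$; replacing $S_0$ by $S_{\mu'}$ and $C_0$ by its pullback finishes the argument.

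The main obstacle is simply having the limit/descent machinery available for algebraic spaces rather than schemes, which matters here because a curve of genus one need not be a scheme. Once one invokes the algebraic-space version of EGA IV \S 8 from the Stacks Project, everything reduces to the routine spreading-out above; the only genuinely separate points are the openness of the good fiber locus $U$ and the affineness trick that keeps $S_0$ of the required form.
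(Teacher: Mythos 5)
Your proposal is correct in substance but takes a different route from the paper. You invoke the limit formalism for algebraic spaces wholesale: spread out $C\to S$ as a finitely presented algebraic space over some $S_{\lambda_0}$, descend properness and smoothness by the corresponding Stacks Project results, then cut out an open locus of good fibers and increase the index until the base maps into it. The paper instead avoids quoting limit theory for algebraic spaces: it chooses an \'etale surjection $U\to C$ with $U$ a quasi-compact scheme, sets $R=U\times_C U$, observes that $C$ is the quotient of the finite-presentation equivalence relation $[R\rightrightarrows U]$ of schemes, and notes that all these data descend to a finitely generated subring of $\Gamma(S,\cO_S)$ --- so the actual spreading out happens at the level of schemes (classical EGA IV \S 8), with $C_0$ reconstructed as the quotient. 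For the section, the paper likewise descends the induced map of equivalence relations $[S''\rightrightarrows S']\to[R\rightrightarrows U]$, where $S'=U\times_C S$, whereas you use the standard fact that morphisms into a finitely presented target descend through the limit; both are fine. Your approach buys a more systematic argument in which every property is descended by a quotable general result; the paper's buys independence from the algebraic-space limit machinery, at the cost of being terse about why the quotient at finite level is again a proper smooth curve of genus one.

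One slip to repair: your locus $U$ is defined only by geometric connectedness, fiber dimension one, and $h^0(\cO)=1$, and you then assert that the fibers over $U$ have $\chi(\cO)=0$. This does not follow --- nothing in your definition of $U$ rules out fibers of higher genus. The fix is immediate: $\chi(\cO_{C_t})$ is locally constant in a proper flat finitely presented family, so the locus where it vanishes is open and closed; replace $U$ by its intersection with that locus. The image of $S$ still lands inside, since the fibers of $C\to S$ have genus one, and the rest of your argument goes through verbatim.
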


\begin{proof}
Since $C$ is quasi-compact,  there is an \'etale surjective $U\to C$ with $U$ a quasi-compact scheme. In particular $U$ is of finite presentation over $S$. Similarly, $R := U\times_C U$ is of finite presentation over $U$ and over $S$. We find that $C$ is the quotient of an equivalence relation $\left[ R \rightrightarrows U \right]$ with $U$, $R$ and the arrows of finite presentation. All these data can be defined over a finitely generated subring of $\Gamma(S,\cO_S)$ and the first claim follows.

For the second claim, take $\left[ R \rightrightarrows U \right]$ as above. Put $S':=U\times_C S$, using the section $S\to C$, and $S'' := S' \times_S S'$. Then $S'$ and $S''$ are \'etale schemes of finite presentation, and the section $S\to C$ induces a map 
\[
	\left[ S'' \rightrightarrows S' \right] \to \left[ R \rightrightarrows U \right]
\]
which determines $s$. Again we see that all data can be defined over a finitely generated subring of $\Gamma(S,\cO_S)$.
\end{proof}

\begin{lemma}\label{lemma:section-implies-representable}
A curve of genus one with a section is representable by an elliptic curve.
\end{lemma}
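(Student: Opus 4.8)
The plan is to use the section to produce a relatively very ample line bundle and thereby realize $C$ as a closed subspace of a projective bundle over $S$; since a closed subspace of a scheme is a scheme, this yields representability, and the resulting scheme is then an elliptic curve by the very definition. Write $p\colon C\to S$ for the structure morphism and $e\colon S\to C$ for the given section. First I would check that $D:=e(S)$ is a relative effective Cartier divisor. Indeed $e$ is a closed immersion, being a section of the separated morphism $p$, and since $p$ is smooth of relative dimension one the subspace $D$ is \'etale-locally on $C$ cut out by a single function that is a nonzerodivisor on each fibre. Hence $\cL:=\cO_C(3D)$ is an invertible sheaf on $C$ whose restriction to each geometric fibre $C_{\bar s}$ is $\cO(3[0])$.

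Next, I would apply cohomology and base change to the proper flat morphism $p$, in the form valid for algebraic spaces. On a geometric fibre $C_{\bar s}$, a genus one curve over a field, the sheaf $\cL$ has degree $3$, so $\rH^0(C_{\bar s},\cL)$ has dimension $3$ and $\rH^1(C_{\bar s},\cL)=0$ by Riemann--Roch. Consequently $R^1 p_\ast\cL=0$, the sheaf $\cE:=p_\ast\cL$ is locally free of rank $3$, and its formation commutes with base change. Moreover the evaluation map $p^\ast\cE\to\cL$ is surjective, since on each fibre $\cO(3[0])$ is globally generated. This surjection determines an $S$-morphism $\phi\colon C\to\bP(\cE)$ with $\phi^\ast\cO(1)=\cL$, where $\bP(\cE)$ is a $\bP^2$-bundle over $S$, and in particular a scheme.

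It then remains to prove that $\phi$ is a closed immersion. As $p$ is proper and $\bP(\cE)\to S$ is separated, the morphism $\phi$ is proper; and since a proper monomorphism is a closed immersion, it suffices to show that $\phi$ is a monomorphism. A morphism locally of finite type is a monomorphism exactly when it is unramified and universally injective, and I would verify both on the fibres $\phi_s\colon C_s\to\bP^2_{\kappa(s)}$. Each $\phi_s$ is the morphism attached to the complete linear system $|3[0]|$ on the genus one curve $C_s$, which is the classical closed embedding of $C_s$ as a plane cubic. In particular each $\phi_s$ is unramified, so $\Omega_{C/\bP(\cE)}=0$ by Nakayama using that $\Omega$ commutes with base change, and each $\phi_s$ is injective with trivial residue extensions, so $\phi$ is universally injective. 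Thus $\phi$ is a closed immersion, $C$ is a closed subspace of the scheme $\bP(\cE)$ and hence a scheme, and $(C,e)$ is an elliptic curve.

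The main obstacle is the passage from these fibrewise statements to the global conclusion while $C$ is a priori only an algebraic space: this is exactly where one needs the algebraic-space versions of cohomology and base change and of the fact that a proper monomorphism is a closed immersion. By contrast, the fibrewise very ampleness of $\cO(3[0])$ on a genus one curve is classical and presents no difficulty.
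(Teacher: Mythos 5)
Your proof is correct and follows essentially the same route as the paper: both use the section to form the degree-$3$ line bundle $\cO_C(3D)$, push it forward to a rank-$3$ locally free sheaf whose formation commutes with base change, and map $C$ into a $\bP^2$-bundle over $S$. The only divergence is the final step: the paper deduces that $C \to \bP^2_S$ is quasi-finite and invokes \cite[II.6.16]{Knutson71} to conclude that $C$ is a scheme, whereas you show the map is a proper monomorphism, hence a closed immersion --- an equally valid (indeed slightly stronger) way to finish.
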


\begin{proof}
Let $p\colon C\to S$ be a curve of genus one, and let $s\colon S\to C$ be a section. By Lemma \ref{lemma:finite-presentation} we may assume that $S$ is affine and noetherian. Then the image of $s$ is closed in $C$. Let $\cI$ be the ideal sheaf corresponding to the image.

We claim that $\cI$ is an invertible sheaf on $C$. It suffices to check this \'etale locally. Choose an \'etale surjective  $U\to C$ of finite presentation, with $U$ a scheme. Base-changing everything to $S':=U\times_C S$, we may  assume that the section $S\to C$ factors over $S\to U$. Now $U$ is a representable smooth curve over $S$ with a section, and the pull-back of $\cI$ to $U$ is the ideal sheaf of this section, hence invertible. 

Let $\cL := \cI^{\otimes -3}$. We claim that $p_\ast \cL$ is locally free of rank $3$. Indeed, for an $s\in S$ denote by $p_s$ the fiber $X_s \to \Spec k(s)$. Then for every $s\in S$ and $i>0$ we have $\rR^i p_{s,\ast} \cL_s=0$. It follows that $\rR^i p_\ast \cL=0$ for all $i>0$ and that the formation of $p_\ast \cL$ commutes with base change. Since for every $s$ the vector space $p_{s,\ast}\cL_s$ is of dimension $3$ over $k(s)$, we conclude that $p_\ast \cL$ is locally free of rank $3$.

Shrinking $S$ we may assume that $p_\ast \cL$ is free of rank $3$, and choosing a basis we obtain a morphism $i\colon C\to \bP^2_S$ over $S$. Its fibers $i_s\colon C_s \to \bP^2_{k(s)}$ are closed immersions. In particular, if $U\to C$ is \'etale surjective and $U$ quasi-compact then the composition $U\to \bP^2_S$ is quasi-finite. We conclude that $C\to \bP^2_S$ is quasi-finite, and hence with \cite[II.6.16]{Knutson71} that $C$ is a scheme.
\end{proof}

\begin{definition}[relative Picard functor] Let $C\to S$ be a curve of genus one and $n$ an integer. Then we define
$\Pic^n_{C/S}$ to be the sheafification of the presheaf
\[
	T \,\mapsto\, \{ \cL \in \Pic C_T \mid \deg \cL_s = n \text{ for all $s\in S$} \} / \Pic T
\]
on $(\Sch/S)_\et$. 
\end{definition}

\begin{lemma} Let $C \to S$ be a curve of genus one. Then $\Pic^1_{C/S}=C$ and $\Pic^0_{C/S}$ is an elliptic curve over $S$. Moreover, $C$ is a torsor under $\Pic^0_{C/S}$ on $(\Sch/S)_\et$.
\end{lemma}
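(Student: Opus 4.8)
The plan is to prove all three assertions first on an \'etale cover of $S$ over which $C$ acquires a section---so that $C$ becomes an elliptic curve by Lemma~\ref{lemma:section-implies-representable}---and then to descend. The two ingredients that make descent work are that the formation of $\Pic^0_{C/S}$ and $\Pic^1_{C/S}$ commutes with base change (each being the sheafification of a presheaf that manifestly does so), and that the properties at stake---being an isomorphism of \'etale sheaves, representability by an algebraic space, smoothness, properness, and the genus of the geometric fibers---may all be checked after an \'etale surjection onto $S$.

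First I would set up the natural maps. A $T$-valued point of $C$ over $S$ is a section $x\colon T\to C_T$ of the smooth separated morphism $C_T\to T$; such a section is a relative effective Cartier divisor of relative degree one, and sending it to the class of $\cO_{C_T}(x)$ defines a morphism of \'etale sheaves $\alpha\colon C\to\Pic^1_{C/S}$. Tensoring line bundles gives an action $\Pic^0_{C/S}\times_S\Pic^1_{C/S}\to\Pic^1_{C/S}$, hence via $\alpha$ an action of $\Pic^0_{C/S}$ on $C$; note that $\Pic^0_{C/S}$ always carries the identity section furnished by the class of $\cO_C$.

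The heart of the argument is the case in which $C\to S$ has a section, so that $C=E$ is an elliptic curve. Here I would invoke the classical theory of genus-one curves over an arbitrary base: fiberwise Riemann--Roch shows that every degree-one line bundle on a genus-one curve is $\cO(x)$ for a unique point $x$, whence $\alpha\colon E\to\Pic^1_{E/S}$ is an isomorphism; and the autoduality of elliptic curves identifies $\Pic^0_{E/S}$ with $E$ itself, an elliptic curve, with respect to which $\alpha$ is equivariant, exhibiting $E$ as the trivial $\Pic^0_{E/S}$-torsor.

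Finally I would descend. Choosing by Lemma~\ref{lemma:etale-locally-section} an \'etale surjection $S'\to S$ over which $C$ has a section, the previous paragraph applies to $C_{S'}$, and the base-change compatibility of $\Pic^0$ and $\Pic^1$ propagates the local conclusions: $\alpha$ becomes an isomorphism after pullback to $S'$, hence is one, giving $\Pic^1_{C/S}=C$; $\Pic^0_{C/S}$ is, after pullback to $S'$, a curve of genus one, so (being an \'etale sheaf that is representable by an algebraic space locally on $S$, equipped with the identity section) it is a curve of genus one with a section, and thus an elliptic curve by Lemma~\ref{lemma:section-implies-representable}; and $C$ is a $\Pic^0_{C/S}$-torsor, since the action map $\Pic^0_{C/S}\times_S C\to C\times_S C$ is an isomorphism after pullback to $S'$ and $C$ admits sections \'etale-locally. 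The main obstacle is precisely the base case: establishing over a general base that $x\mapsto\cO(x)$ identifies an elliptic curve with its degree-one Picard space and that $\Pic^0$ is autodual. This is where the genuine input---relative Riemann--Roch, cohomology and base change, and representability of the Picard functor for a curve with a section---enters, and I would cite a standard reference for it rather than reprove it; the remaining steps are \'etale-descent bookkeeping.
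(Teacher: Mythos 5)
Your proposal is correct and follows essentially the same route as the paper: define the map $C\to\Pic^1_{C/S}$ via sections, reduce to the elliptic-curve case using Lemmas \ref{lemma:etale-locally-section} and \ref{lemma:section-implies-representable}, invoke the classical identifications $E\isomto\Pic^1_{E/S}$ and $E\isomto\Pic^0_{E/S}$ there, and conclude by \'etale descent. Your write-up is in fact somewhat more careful than the paper's (e.g.\ using the identity section of $\Pic^0_{C/S}$ together with Lemma \ref{lemma:section-implies-representable} to get representability by a scheme, where the paper just says ``by descent''), but the underlying argument is the same.
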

 
\begin{proof}
Given a section $s\in C(T)$ we have a line bundle $\cL(s) \in \Pic^1_{C/S}(T)$. This defines a map of sheaves 
$C\to \Pic^1_{C/S}$. By Lemmas \ref{lemma:etale-locally-section} and \ref{lemma:section-implies-representable} the algebraic space $C$ is \'etale locally on $S$ representable by an elliptic curve, hence the map $C\to \Pic^1_{C/S,\et}$ is \'etale locally an isomorphism, and therefore an isomorphism.

Similarly, the sheaf of groups $\Pic^0_{C/S}$ is \'etale locally representable by an elliptic curve, and therefore, by descent, it is an elliptic curve over $S$.  By Lemma \ref{lemma:etale-locally-section} the sheaf $\Pic^1_{C/S}$ has \'etale locally a section, so it is a torsor under $\Pic^0_{C/S}$.
\end{proof}

We conclude

\begin{proposition}\label{prop:curves-versus-torsors}
Let $S$ be a scheme. The functor
\[
	C \mapsto (\Pic^0_{C/S}, \Pic^1_{C/S} )
\]
defines an equivalence of groupoids between
\begin{enumerate}
\item curves of genus one over $S$, and,
\item pairs of an elliptic curve $E/S$ and an $E$-torsor on $(\Sch/S)_\et$.
\end{enumerate}
This equivalence is compatible with base change along maps $S'\to S$.\qed
\end{proposition}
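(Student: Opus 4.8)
The plan is to exhibit an explicit quasi-inverse and to check that the two composites are naturally isomorphic to the identity, with every construction manifestly commuting with base change. First, the functor in the statement is well-defined: by the preceding lemma, for any curve of genus one $C/S$ the sheaf $\Pic^0_{C/S}$ is an elliptic curve, $\Pic^1_{C/S}$ is a torsor under it, and the canonical map $C\to\Pic^1_{C/S}$ sending a section $s$ to the class of $\cL(s)$ is an isomorphism. A morphism of curves of genus one induces compatible maps on the relative Picard functors, so we do obtain a functor to pairs (elliptic curve, torsor); since both sides are groupoids, all these morphisms are isomorphisms.

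Next I would construct the quasi-inverse. Given a pair $(E,T)$ with $T$ an $E$-torsor on $(\Sch/S)_\et$, send it to $T$ regarded as a curve of genus one. This is legitimate: a torsor is \'etale-locally on $S$ isomorphic to $E$, and since $E$ is an algebraic space and being an algebraic space satisfies \'etale descent, $T$ is representable by an algebraic space. \'Etale-locally $T\cong E$ is a proper smooth curve of genus one, and each of these properties is \'etale-local on the base, so $T$ is a curve of genus one over $S$. On morphisms the construction is the obvious one: an isomorphism of pairs $(E,T)\isomto(E',T')$ restricts to an isomorphism $T\isomto T'$ of the underlying torsors.

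It remains to identify the two composites with the identity. Starting from a curve $C$, the lemma gives $\Pic^1_{C/S}=C$ canonically, so $C\mapsto(\Pic^0_{C/S},\Pic^1_{C/S})\mapsto\Pic^1_{C/S}$ returns $C$. Starting from a pair $(E,T)$, applying the lemma to the curve $T$ gives $\Pic^1_{T/S}=T$ and exhibits $\Pic^0_{T/S}$ as an elliptic curve under which $T$ is a torsor; what must be produced is a canonical isomorphism $E\isomto\Pic^0_{T/S}$ identifying the two torsor structures on $T$. \'Etale-locally a trivialization $\varphi\colon T\isomto E$ induces $\varphi^\ast\colon\Pic^0_{E/S}\isomto\Pic^0_{T/S}$, which composed with the canonical identification $\Pic^0_{E/S}\cong E$ yields a local isomorphism $\Pic^0_{T/S}\cong E$. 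To see that these glue, note that any two trivializations differ by a translation $t_a$ of $E$; by the theorem of the square (equivalently, because $\Pic^0$ parametrizes translation-invariant line bundles) one has $t_a^\ast=\id$ on $\Pic^0_{E/S}$, so $\varphi^\ast$ is independent of the chosen trivialization. The local identifications therefore descend to a canonical global isomorphism $E\isomto\Pic^0_{T/S}$. That it intertwines the $E$-action on $T$ with the tensor action of $\Pic^0_{T/S}$ on $\Pic^1_{T/S}=T$ is a direct check over the trivializing cover, where it amounts to the statement $\cO(a-0)\otimes\cO(s)\cong\cO(a+s)$, i.e.\ to Riemann--Roch on a genus one curve.

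The hardest step is precisely this last gluing: verifying that translation-invariance of $\Pic^0$ makes the local identifications $E\cong\Pic^0_{T/S}$ independent of the trivialization and compatible with the torsor actions. Once this is in place, compatibility with base change is immediate, since the formation of $\Pic^0_{C/S}$ and $\Pic^1_{C/S}$ as \'etale sheafifications commutes with pullback along $S'\to S$, as does the reconstruction $(E,T)\mapsto T$.
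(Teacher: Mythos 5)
Your proposal is correct and matches the paper's (implicit) argument: the paper states this proposition as an immediate consequence of the preceding lemma, offering no separate proof beyond the word ``We conclude.'' Your explicit quasi-inverse $(E,T)\mapsto T$, the descent argument showing $T$ is a curve of genus one, and the gluing of the local identifications $E\cong\Pic^0_{T/S}$ via translation-invariance of $\Pic^0$ are precisely the details the paper leaves to the reader, and they are carried out correctly (including the sign in $a\mapsto\cO([a]-[0])$ needed for compatibility of the torsor actions).
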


We denote by $\cM_1(S)$ and $\cM_{1,1}(S)$ the groupoids of curves of genus one
over $S$, and of elliptic curves over $S$ respectively. Varying $S$ we obtain categories $\cM_1$ and $\cM_{1,1}$ fibered in groupoids over $\Sch$. We have a 
\emph{Jacobian map} $J\colon \cM_1 \to \cM_{1,1}$, sending $C$ to $\Pic^0_{C/S}$.
The \emph{forgetful map} $f\colon \cM_{1,1} \to \cM_1$, sending $(E,0)$ to $E$, is a section of $J$.

\begin{theorem} \label{thm:stacks-summary}
$\cM_1$ and $\cM_{1,1}$ are algebraic stacks over $\Spec \bZ$. Moreover,
\begin{enumerate}
\item $\cM_{1,1}$ is Deligne-Mumford, separated and smooth over $\Spec \bZ$;
\item $\cM_1$ is separated and smooth over $\Spec \bZ$;
\item $f\colon \cM_{1,1} \to \cM_1$ is representable by algebraic spaces, proper, smooth, and coincides with the universal curve $\cC \to \cM_1$;
\item $J\colon \cM_1 \to \cM_{1,1}$ coincides with the classifying space $B\cE \to \cM_{1,1}$ of the universal elliptic curve $\cE \to \cM_{1,1}$. 
\end{enumerate}
\end{theorem}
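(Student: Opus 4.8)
The plan is to treat the (essentially classical) properties of $\cM_{1,1}$ as input and to deduce everything about $\cM_1$ formally from the torsor description of Proposition~\ref{prop:curves-versus-torsors}. First I would establish that $\cM_{1,1}$ is a smooth separated Deligne--Mumford stack over $\Spec \bZ$ of relative dimension $1$. A quick route: Zariski-locally every elliptic curve $E/S$ admits a Weierstrass equation (choose bases of $\pi_\ast \cO(2\cdot 0)$ and $\pi_\ast \cO(3\cdot 0)$), so $\cM_{1,1} \simeq [W/G]$, where $W \subset \bA^5_\bZ = \Spec \bZ[a_1,a_2,a_3,a_4,a_6]$ is the locus of invertible discriminant and $G = \bG_m \ltimes \bG_a^3$ is the group of Weierstrass coordinate changes. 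Since $W$ and $G$ are smooth over $\bZ$, this presentation exhibits $\cM_{1,1}$ as an algebraic stack smooth over $\Spec\bZ$. The stabilizer of a point of $W$ is the finite group scheme $\Aut(E,0)$; its finiteness yields separatedness (finite, hence proper, diagonal), and its étaleness yields the Deligne--Mumford property. I would cite Deligne--Rapoport or Katz--Mazur for the étaleness, which is the genuinely delicate point in characteristics $2$ and $3$.

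Next I would prove statement (4) together with the algebraicity and smoothness of $\cM_1$. By Lemma~\ref{lemma:section-implies-representable} the universal elliptic curve $\pi\colon \cE \to \cM_{1,1}$ is representable by schemes, proper and smooth of relative dimension $1$, with its group structure. By Proposition~\ref{prop:curves-versus-torsors} a morphism $S \to \cM_1$ is the same datum as an elliptic curve $E/S$ together with an $E$-torsor on $(\Sch/S)_\et$, with $J$ being the forgetful map. Hence for any $S \to \cM_{1,1}$ classifying $E/S$ the fiber product $\cM_1 \times_{\cM_{1,1}} S$ is the stack of $E$-torsors, i.e.\ $B_S E$; this is exactly the identification $J = (B\cE \to \cM_{1,1})$ of statement (4). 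Since $\cE$ is a flat, proper, finitely presented group scheme over the algebraic stack $\cM_{1,1}$, its classifying stack $B\cE$ is algebraic (Stacks Project \cite{stacks-project}), so $\cM_1$ is algebraic. Moreover the canonical atlas $S \to B_S E$ is a smooth surjection (an $\cE$-torsor), so $B\cE \to \cM_{1,1}$ is smooth; composing with the smooth $\cM_{1,1} \to \Spec\bZ$ shows $\cM_1$ is smooth over $\Spec\bZ$, of relative dimension $1 + (-1) = 0$.

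For the separatedness in (2) I would show the diagonal $\Delta_{\cM_1/\bZ}$ is proper by factoring it as $\cM_1 \xrightarrow{\Delta_{\cM_1/\cM_{1,1}}} \cM_1 \times_{\cM_{1,1}} \cM_1 \to \cM_1 \times_\bZ \cM_1$. The second map is a base change of the finite (hence proper) diagonal of the separated stack $\cM_{1,1}$. For the first, the relative diagonal of $B\cE \to \cM_{1,1}$ is representable by the $\Isom$-spaces of $\cE$-torsors; as $\cE$ is abelian, such an $\underline{\Isom}$ is, when non-empty, a torsor under $\cE$, hence proper over the base. Concretely this is the statement that $\underline{\Isom}_S(C,C') \to \underline{\Isom}_S(J C, J C')$ is a torsor under $J C$ (acting by translations) with finite target, so both factors are proper and $\cM_1$ is separated.

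Finally, for statement (3), given $S \to \cM_1$ classifying a curve $C/S$ I would compute $\cM_{1,1} \times_{\cM_1} S$: a $T$-point is an elliptic curve $(E,0)/T$ together with an isomorphism of genus one curves $E \simeq C_T$, which by Lemma~\ref{lemma:section-implies-representable} is the same as a point of $C(T)$. Hence $\cM_{1,1} \times_{\cM_1} S \simeq C$, so $f$ is the universal curve $\cC \to \cM_1$ and inherits representability by algebraic spaces, properness, and smoothness from $C \to S$. The main obstacle is really the first paragraph --- pinning down that $\cM_{1,1}$ is Deligne--Mumford and smooth over all of $\Spec\bZ$, the delicate input being the automorphism group schemes of supersingular curves in characteristics $2$ and $3$ --- since once $\cM_1 = B\cE$ is in hand, all the assertions about $\cM_1$ follow formally.
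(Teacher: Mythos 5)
Your proposal is correct and follows essentially the same route as the paper: part (4) is read off from Proposition \ref{prop:curves-versus-torsors}, part (3) comes from identifying $\cM_{1,1}\times_{\cM_1}S$ with the sheaf of sections of $C$ (hence with $C$ itself), separatedness comes from factoring $\Isom$ of genus one curves through the finite $\Isom$ of their Jacobians with $\cE$-torsor fibers, and algebraicity/smoothness of $\cM_1$ comes from pulling back a smooth atlas of $\cM_{1,1}$ along the forgetful (trivial-torsor) map. The only cosmetic differences are your Weierstrass presentation $[W/G]$ of $\cM_{1,1}$, where the paper simply cites Deligne--Rapoport, and your appeal to a Stacks Project quotient-stack result for the algebraicity of $B\cE$, which the paper instead obtains directly from the diagonal analysis together with the atlas --- ingredients your write-up also contains, so nothing essential hinges on that citation.
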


\begin{proof}
For (3), let $S$ be a scheme and $S\to \cM_1$ a map corresponding to a curve $C\to S$ of genus one. Then $S\times_{\cM_1} \cM_{1,1}$ is the sheaf of sections of $C$, and hence naturally isomorphic with $C$. This shows that $\cM_1 \to \cM_{1,1}$ is representable by an algebraic space, and that it coincides with the universal curve $\cC \to \cM_1$. In particular, it is proper and smooth.

Now for (1) and (2), we have that 
$\cM_{1,1}$ is a separated smooth Deligne-Mumford stack by \cite{DeligneRapoport73}. $\cM_1$ is a stack by the description in terms of torsors under elliptic curves. If $C_1$ and $C_2$ are genus one curves over $S$ 
with Jacobians $E_1$ and $E_2$ respectively, then $\Isom(C_1,C_2) \to S$ factors
as
\[
	\Isom(C_1,C_2) \to \Isom(E_1,E_2)\to S.
\]
The map $\Isom(C_1,C_2) \to \Isom(E_1,E_2)$ is an $E$-torsor (for $E$ the base change of either $E_1$ or $E_2$ to $\Isom(E_1,E_2)$), so is representable by a proper smooth algebraic space. The map $\Isom(E_1,E_2) \to S$ is finite, and we conclude that $\Isom(C_1,C_2)\to S$ is proper. It follows that the diagonal of $\cM_1$ is representable and proper.  Any choice of a scheme $U$ and smooth surjective $U \to \cM_{1,1}$ induces a smooth surjective $U\to \cM_1$.
(For example one may take $U=Y(n)_{\bZ[1/n]} \coprod Y(m)_{\bZ[1/m]}$ for coprime integers $n,m$ with $n,m \geq 3$.) Since $U\to \Spec \bZ$ is smooth, the algebraic stack $\cM_1$ is smooth over $\Spec \bZ$.

Finally, (4) is a restatement of Proposition \ref{prop:curves-versus-torsors}.
\end{proof}

\begin{theorem}[Raynaud {\cite[XIII.2.6]{Raynaud70}}]\label{thm:Raynaud}
 Let $S$ be a quasi-compact scheme and $C\to S$ a curve of genus one, and $E\to S$ its relative Jacobian.
\begin{enumerate}
\item If $S$ is normal then $C\to S$ is representable by a scheme if and only if $C$ is of finite order as an $E$-torsor;
\item If $S$ is regular then $C\to S$ is of finite order as an $E$-torsor and representable by a scheme.
\end{enumerate}
\end{theorem}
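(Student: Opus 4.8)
The plan is to reformulate everything in terms of the cohomology class of the torsor and then to treat the two implications separately, since only one of them uses the normality or regularity hypothesis. Since $C$ is a torsor under $E$ on $(\Sch/S)_\et$, it has a class $[C]\in\rH^1(S_\et,E)$, and ``finite order as an $E$-torsor'' means precisely that $[C]$ is torsion. Because $\Pic^1_{C/S}=C$ and the addition $\Pic^m_{C/S}\times\Pic^n_{C/S}\to\Pic^{m+n}_{C/S}$ is additive on torsor classes, $\Pic^n_{C/S}$ is the $E$-torsor with class $n[C]$; thus $[C]$ has order dividing $n$ if and only if $\Pic^n_{C/S}\to S$ admits a section. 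Finally I reduce representability to the existence of a relatively ample invertible sheaf on $C$: on a genus one curve any invertible sheaf of positive fibre degree is relatively ample, and (after passing to a multiple to get very ampleness) it embeds $C$ as a closed subscheme of a projective bundle over $S$, exactly as in the proof of Lemma \ref{lemma:section-implies-representable}.

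First I would prove the implication ``finite order $\Rightarrow$ representable'', which in fact needs no hypothesis on $S$ beyond quasi-compactness. Suppose $n[C]=0$. Using the fppf Kummer sequence $0\to E[n]\to E\xrightarrow{\,n\,}E\to 0$ (exact, as $n$ is an fppf-surjective isogeny) together with $\rH^1(S_\et,E)=\rH^1(S_{\mathrm{fppf}},E)$, the class $[C]$ lifts to some $\xi\in\rH^1(S_{\mathrm{fppf}},E[n])$. The associated $E[n]$-torsor $P$ is finite locally free over $S$ of rank $n^2$, and its push-out along $E[n]\hookrightarrow E$ recovers $C$; concretely $p\mapsto p\cdot 0$ defines a finite monomorphism $P\hookrightarrow C$ into the separated scheme $C$, hence a closed immersion. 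Its image is a relative effective Cartier divisor in the smooth relative curve $C\to S$, so $\cO_C(P)$ is a genuine invertible sheaf of positive fibre degree, and a suitable multiple is relatively very ample. Thus $C$ is a scheme. The point of producing an actual divisor, rather than merely a section of $\Pic^n_{C/S}$, is that it side-steps the Brauer obstruction to lifting such a section to an honest line bundle. This settles the ``if'' in (1), and it reduces (2) to the finite-order assertion.

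For the remaining content I would fix $S$ integral (reducing to connected components), with generic point $\eta$ and function field $K=k(\eta)$. The generic fibre $C_\eta$ is a smooth genus one curve over the field $K$, so it carries an invertible sheaf $\cL_\eta$ of some positive degree $d$ (for instance $\cO(x)$ for a closed point $x$, of degree $[\kappa(x):K]$); equivalently $[C]|_\eta\in\rH^1(K_\et,E)$ is torsion, this group being torsion as it is the degree-one Galois cohomology of a profinite group. The task is then to propagate $d[C]|_\eta=0$ to a global identity. The mechanism is to extend $\cL_\eta$ to an invertible sheaf on all of $C$. Since $C\to S$ is smooth, $C$ inherits normality, resp.\ regularity, from $S$. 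When $S$ is regular, $C$ is regular, hence locally factorial, so the closure of a divisor representing $\cL_\eta$ is automatically Cartier and yields a global invertible sheaf of fibre degree $d$, whence $d[C]=0$: this gives the finite-order claim of (2), and representability then follows from the previous paragraph. When $S$ is only normal but $C$ is assumed representable, $C$ is a normal scheme and the same closure is only a Weil divisor; one must show that a suitable multiple is Cartier, and this is where normality (regularity in codimension one, together with codimension-two purity for the Picard functor on $C$) is genuinely used.

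I expect this last extension step — the passage from a Weil divisor to a Cartier divisor on the normal total space $C$, and the bookkeeping that controls by how much one must multiply — to be the main obstacle, and to be exactly the substance of Raynaud's theorem. It is also precisely the point at which the dichotomy appears: regularity makes $C$ locally factorial so that Weil $=$ Cartier and one even gets outright triviality of $\Pic^d_{C/S}$, whereas mere normality forces one to pay the price of a finite multiple and to invoke representability as a hypothesis. The natural way to make the multiplication effective is to localize at each codimension-one point of $S$, where $E_\eta$ acquires a N\'eron model whose component group is finite; the order of that component group bounds the local contribution, and patching these bounds over the quasi-compact normal base produces the global finite order.
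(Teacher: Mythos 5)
A point of framing first: the paper contains no proof of this statement at all --- it is quoted directly from Raynaud \cite[XIII.2.6]{Raynaud70} --- so your reconstruction can only be judged on its own terms and against Raynaud's argument. With that said, two of your three implications are essentially correct and complete. The Kummer-sequence argument for ``finite order $\Rightarrow$ representable'' works: if $n[C]=0$ in $\rH^1(S_\et,E)=\rH^1(S_{\mathrm{fppf}},E)$, the lifted class gives a finite locally free $E[n]$-torsor $P$, the natural map from $P$ to the contracted product $E\wedge^{E[n]}P\cong C$ is a proper monomorphism, hence a closed immersion, and the resulting relative effective Cartier divisor of fibre degree $n^2$ gives a relatively ample invertible sheaf, after which one concludes exactly as in Lemma \ref{lemma:section-implies-representable}. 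Your proof of the finite-order assertion in (2) is also correct: for $S$ regular integral the algebraic space $C$ is regular, the ideal sheaf of the closure of a degree-$d$ closed point of $C_\eta$ is invertible (invertibility is an \'etale-local property, and the \'etale local rings of $C$ are regular, hence factorial), and the resulting line bundle of constant fibre degree $d$ trivializes the torsor $\Pic^d_{C/S}$, so $d[C]=0$.

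The genuine gap is the implication ``$S$ normal and $C$ a scheme $\Rightarrow$ finite order'', which you explicitly leave open; this is not a technical remainder but the actual substance of Raynaud's theorem, the other two implications being standard. Moreover, both mechanisms you sketch for it would fail. First, on a normal scheme it is false in general that some positive multiple of a Weil divisor is Cartier --- that is $\bQ$-factoriality, which normality does not provide --- so ``take the closure of a generic-fibre divisor and pass to a multiple'' cannot be the argument; the representability hypothesis must enter through finer structure (Raynaud works with the complement of an affine open of the scheme $C$, which meets every fibre in a divisor of positive degree, together with his theory of ample sheaves on homogeneous spaces). Second, the N\'eron-model bound is vacuous: $E$ is an abelian scheme, so at every codimension-one point of $S$ (whose local ring is a DVR, by normality) it is already its own N\'eron model with trivial component group. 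Worse, the local statement you aim for is automatic and carries no information: $[C]|_K$ is torsion, killed by some $d\geq 1$, and for a DVR $\cO_{S,s}$ the map $\rH^1(\cO_{S,s},E)\to\rH^1(K,E_K)$ is injective (the closure in $C$ of a generic section is finite over $\cO_{S,s}$, hence equal to $\Spec \cO_{S,s}$ by integral closedness, i.e.\ it is a section), so $d[C]$ already dies at every codimension-one localization. The entire difficulty is that this does not globalize: the obstruction is the possible non-injectivity of $\rH^1(S_\et,E)\to\rH^1(K,E_K)$ for normal non-regular $S$, a genuinely global phenomenon, and no patching principle of the kind you invoke exists --- if it did, part (2) would hold over every normal base and the dichotomy between (1) and (2) would collapse.
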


See \cite[XIII 3.2]{Raynaud70} or \cite{Zomervrucht13} for an example of a $C\to S$ which is not representable by a scheme. We end this section with an example of a representable genus one curve of infinite order. We will use a similar construction in \S \ref{sec:Q-coefficients}.

\begin{example}[{\cite[XIII 3.1]{Raynaud70}}] 
\label{ex:infinite-order}
Let $E$ be an elliptic curve over a field $k$ and let $x\in E(k)$. Let $S$ be the nodal curve $S$ obtained by identifying $\{0\}$ and $\{\infty\}$ in $\bP^1$. Let $p\colon C\to S$ be the curve obtained by identifying the closed subschemes $E\times \{0\}$ and $E\times \{\infty\}$ of $E\times \bP^1$ by translation along $x\in E(k)$, in other words, by identifying points $(a,0)$ with $(a+x,\infty)$ in $E\times \bP^1$. The scheme $C\to S$ exists by \cite[Theorem 5.4]{Ferrand03}. To see that $C\to S$ is a genus one curve it suffices to observe that the pull-back to the \'etale neighbourhood $U = \Spec k[x,y]/(xy) \to S$ of the node $s \in S$ is isomorphic to $E_U \to U$. 

The torsor $C$ is trivial if and only if it has a section, and since there are no non-constant maps $\bP^1\to E$, this happens only if $x=0$. In particular, one finds that the $E_S$-torsor $C\to S$ is of infinite order as soon as $x\in E(k)$ is of infinite order.
\end{example}

\section{Cohomology with $\bZ[\frac{1}{2}]$-coefficients}\label{sec:splitting}

\subsection{A splitting result in group cohomology}

\begin{proposition}\label{prop:equivariant-group-cohomology}
Let $\Lambda$ be a free abelian group of finite rank $d$. Let $G$ be a subgroup of $\Aut \Lambda$. Consider the left-exact functor
\[
	\pi_\ast \colon \Mod_{\bZ[G\ltimes \Lambda]} \to \Mod_{\bZ[G]},\, M \mapsto M^\Lambda.
\]
Then
\[
	\rR \pi_\ast \bZ[\textstyle\frac{1}{d!}] 
	\cong \bigoplus_k \left( \wedge^k\Hom(\Lambda,\bZ[\frac{1}{d!}])\right)[-k]
\]
in $\cD^+(\Mod_{\bZ[G]})$.
\end{proposition}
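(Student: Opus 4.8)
The plan is to identify $\rR\pi_\ast$ with $G$-equivariant group cohomology of $\Lambda$, read off its cohomology objects, and then prove that the resulting complex becomes \emph{formal} once $d!$ is inverted, by producing enough endomorphisms to annihilate all of its Postnikov $k$-invariants.

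First I would note that $\pi_\ast=(-)^\Lambda$ is the invariants functor for the normal subgroup $\Lambda\trianglelefteq G\ltimes\Lambda$, so that $\rR\pi_\ast M$ is computed by the group cohomology $\rH^\bullet(\Lambda,M)$ with its residual $G$-action. Taking $R:=\bZ[\frac{1}{d!}]$ with trivial action, the classifying space $B\Lambda$ is a $d$-torus, so (via the Koszul resolution of $R$ over $R[\Lambda]$, or K\"unneth) the cohomology objects of $X:=\rR\pi_\ast R$ are $\rH^k(\Lambda,R)=\wedge^k\Hom(\Lambda,R)$, with $G$ acting through $\Lambda$. The content of the proposition is thus that $X\in\cD^+(\Mod_{\bZ[G]})$ is isomorphic to $\bigoplus_k \rH^k(X)[-k]$, i.e.\ that $X$ splits $G$-equivariantly into its cohomology.

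To split it I would use Adams-type operators. For each integer $n\ge 1$, multiplication $[n]\colon\Lambda\to\Lambda$ is central in $\End\Lambda$, hence commutes with $G$ and extends to an endomorphism $\id_G\ltimes[n]$ of $G\ltimes\Lambda$ over $\id_G$; restriction along it gives a $G$-equivariant endomorphism $\psi_n\in\End_\cD(X)$, and these satisfy $\psi_m\psi_n=\psi_{mn}$. Since $[n]$ acts by $n$ on $\rH^1(\Lambda,R)=\Hom(\Lambda,R)$, the operator $\psi_n$ acts on $\rH^k(X)=\wedge^k\Hom(\Lambda,R)$ by multiplication by $n^k$. Now I would split $X$ by induction along its truncation tower: assuming $\tau_{\le k-1}X\cong\bigoplus_{j<k}\rH^j(X)[-j]$ compatibly with all $\psi_n$, the obstruction to splitting off $\rH^k(X)[-k]$ is the $k$-invariant $\kappa\colon \rH^k(X)[-k]\to(\tau_{\le k-1}X)[1]$, whose $j$-th component lies in $\Ext^{k-j+1}_{\bZ[G]}(\rH^k(X),\rH^j(X))$. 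Because each $\psi_n$ is functorial on the whole tower and acts by $n^k$ on the source and by $n^j$ on the $j$-th target summand, naturality forces $(n^k-n^j)\kappa_j=0$ for every $n$; multiplicativity of the $\psi_n$ moreover lets one correct any section to a $\psi_n$-equivariant one, keeping the induction compatible.

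The crux, and the unique source of the denominator, is the arithmetic claim that for fixed $0\le j<k\le d$ the integers $\{n^k-n^j\}_{n\ge 2}$ generate the unit ideal of $R=\bZ[\frac1{d!}]$; equivalently their gcd has no prime factor $p>d$. Writing $n^k-n^j=n^j(n^m-1)$ with $m=k-j$: for $j=0$ the value $n=p$ gives $p^k-1\equiv-1\pmod p$; for $j\ge 1$ one has $1\le m\le d-1<p-1$, so a primitive root $n$ modulo $p$ has $n^m\ne 1$ and $p\nmid n$, whence $p\nmid n^j(n^m-1)$. Since $\bZ[\frac1{d!}]$ is exactly the localization inverting the primes $\le d$, each such gcd is a unit there, so the relations $(n^k-n^j)\kappa_j=0$ force $\kappa_j=0$, hence $\kappa=0$, and the induction delivers the asserted splitting; for $\Lambda=\bZ^2$, $G=\SL_2\bZ$ this recovers exactly the factor $\bZ[\frac12]$ of Theorem~\ref{thm:one}. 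I expect the two genuinely substantive points to be (i) setting the formality argument up correctly in the equivariant derived category — ensuring the $\psi_n$ are honest endomorphisms of the object $X$ acting functorially on the Postnikov tower, so that the naturality relation and the $\psi_n$-equivariant correction of sections are legitimate — and (ii) the arithmetic lemma, which is what makes $d!$ (rather than a larger number) suffice. The authors reach formality instead by a simplicial argument through Quillen's computation of derived exterior powers; the route above is an alternative that renders the exponent $d!$ transparent.
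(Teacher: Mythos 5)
Your argument is correct, but it follows a genuinely different route from the paper's. The paper proves this Proposition by exhibiting an explicit $G$-equivariant quasi-isomorphism: on the bar complex $C^\bullet(\Lambda,\bZ[\frac{1}{d!}])$ it sends $\varphi_1\wedge\dots\wedge\varphi_k$ to the antisymmetrized cochain $\frac{1}{k!}\sum_{\sigma}\sgn(\sigma)\varphi_1(\lambda_{\sigma 1})\cdots\varphi_k(\lambda_{\sigma k})$, and verifies that this is a quasi-isomorphism by comparing product structures on cohomology; the factor $d!$ enters through the coefficients $1/k!$ with $k\leq d$. (Quillen's theorem on derived exterior powers, which you ascribe to this proof, is invoked only later, for the relative classifying space in Theorem \ref{thm:torus-split}(2).) Your Adams-operation argument is sound, and is essentially an integral refinement of Deligne's classical splitting argument for abelian schemes: writing $X:=\rR\pi_\ast\bZ[\frac{1}{d!}]$, the operators $\psi_n$ are honest endomorphisms of the bar complex of $\bZ[G]$-modules (strictly multiplicative, $G$-equivariant, acting by $n^k$ on $\rH^k$), naturality of truncation gives $(n^k-n^j)\kappa_j=0$ once the splitting of $\tau_{\leq k-1}X$ is $\psi_n$-equivariant, and the equivariant correction of sections that you invoke does go through: with $\iota\colon\tau_{\leq k-1}X\to\tau_{\leq k}X$ the truncation map and $s$ any section of $\tau_{\leq k}X\to\rH^k(X)[-k]$, write $\psi_n s-s\cdot n^k=\iota h_n$ with components $h_{n,j}\in\Ext^{k-j}_{\bZ[G]}(\rH^k(X),\rH^j(X))$; multiplicativity and commutativity of the $\psi_n$ yield $(n^k-n^j)h_{m,j}=(m^k-m^j)h_{n,j}$, so choosing $r_i\in\bZ[\frac{1}{d!}]$ with $\sum_i r_i(n_i^k-n_i^j)=1$ (your arithmetic lemma, which is correct as stated) and setting $e_j:=\sum_i r_i h_{n_i,j}$ gives $h_{m,j}=(m^k-m^j)e_j$ for all $m$, so that $s+\iota e$ is $\psi_n$-equivariant for every $n$ and the induction proceeds. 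As for what each approach buys: the paper's proof is shorter and produces an explicit, multiplicative splitting map; yours makes the exponent $d!$ arithmetically transparent and is portable to settings with no convenient cochain model (abelian schemes, \'etale cohomology). Its limitation is that it cannot substitute for the Quillen argument in Theorem \ref{thm:torus-split}(2): there the fibre is a classifying space, the cohomology is unbounded in degree, and for fixed $j<k$ the quantity $\gcd_n(n^k-n^j)$ acquires arbitrarily large primes as $k-j$ grows (e.g.\ $p\mid n^p-n$ for all $n$), whereas for the torus itself, where $k\leq d$, your induction terminates and the argument is complete.
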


We have  $\rR^k \pi_\ast \bZ \cong \wedge^k \Hom(\Lambda, \bZ)$ as $\bZ[G]$-modules for all $k$. However,
the complex $\rR \pi_\ast \bZ \in \cD^+(\Mod_{\bZ[G]})$ is in general not split, see \cite{LangerLueck12} for a counterexample.

\begin{proof}[Proof of Proposition \ref{prop:equivariant-group-cohomology}]
The bar-resolution of the $\bZ[\Lambda]$-module $\bZ$ is in fact a resolution of $\bZ[G\ltimes \Lambda]$-modules that is acyclic for $\pi_\ast$, so it can be used to compute $\rR \pi_\ast \bZ$. The result is the usual cochain complex $C^\bullet(\Lambda,\bZ)$ with its natural action of $G$. In detail, for $n\geq 0$ let $C^n(\Lambda,\bZ)$ be the set of maps from $\Lambda^n$ to $\bZ$, and consider the map
\[
	d^n\colon C^n(\Lambda,\bZ) \to C^{n+1}(\Lambda,\bZ)
\]
given by
\begin{eqnarray*}
	(d^n\!f)(\lambda_1,\ldots, \lambda_{n+1}) &=& f(\lambda_2,\ldots,\lambda_{n+1}) \\
	&+& \sum_{i=1}^n (-1)^i f(\lambda_1,\ldots, \lambda_i+\lambda_{i+1}, \ldots, \lambda_{n+1}) \\
	&+& (-1)^{n+1}f(\lambda_1,\ldots, \lambda_n).
\end{eqnarray*}
Then $C^\bullet(\Lambda,\bZ)$ is a complex of $\bZ[G]$-modules, quasi-isomorphic to $\rR \pi_\ast \bZ$. Similarly we have $C^\bullet(\Lambda,\bZ[\frac{1}{d!}]) \cong
\rR \pi_\ast \bZ[\frac{1}{d!}]$.

Now consider for $0\leq k \leq d$ the maps
\[
	a^k\colon \wedge^k \Hom(\Lambda,\,\bZ[\textstyle\frac{1}{d!}])
	\to C^k(\Lambda,\bZ[\textstyle\frac{1}{d!}] )
\]
given by
\[
	\varphi_1\wedge\cdots\wedge\varphi_k \mapsto
	\Big[ (\lambda_1,\ldots,\lambda_k) \mapsto 
	\frac{1}{k!} \sum_{\sigma \in S_k}
		\sgn(\sigma) \varphi_1(\lambda_{\sigma 1}) \cdots \varphi_k(\lambda_{\sigma k})
	\Big].
\]
The collection of maps $a^k$ define a quasi-isomorphism
\[
	a\colon \bigoplus_k
	\big( \wedge^k \Hom(\Lambda,\,\bZ[\textstyle\frac{1}{d!}]) \big)[-k]
	 \longrightarrow C^\bullet(\Lambda,\bZ[\textstyle\frac{1}{d!}])
\]
of complexes of $\bZ[G]$-modules. Indeed, the maps $a^k$ are clearly $G$-equivariant, and a direct computation shows that $d^ka^k(\varphi_1\wedge\cdots\wedge\varphi_k)=0$, so that $a$ is indeed a morphism of complexes. 

Now both source and target of $a$ are differential graded algebras whose cohomology ring is the exterior power ring on $\rH^1=\Hom(\Lambda,\bZ[\frac{1}{d!}])$. Also, $a$ induces the identity map on $\rH^1$. So to show that $a$ is a quasi-isomorphism, it suffices to show that $a$ respects the product structure on the cohomology rings.

Indeed, let $\varphi_1,\ldots,\varphi_k \in  \Hom(\Lambda,\bZ[\frac{1}{d!}])$. Then the product of the cochains $a\varphi_i$ is the map
\[
	\Lambda^k \to \bZ[\textstyle\frac{1}{d!}],\, (\lambda_1,\ldots,\lambda_k) \mapsto \varphi_1(\lambda_1)  \cdots \varphi_k(\lambda_k).
\]
The product on cohomology being graded commutative, we see that this gives the same class in
$\rH^k(C^\bullet(\Lambda,\bZ[\frac{1}{d!}]))$ as the cochain
\[
	(\lambda_1,\ldots,\lambda_k)  \mapsto \frac{1}{k!}
	\sum_{\sigma \in S_k} \sgn(\sigma) \varphi_1(\lambda_{\sigma i}) \cdots \varphi_k(\lambda_{\sigma k}),
\]
which is precisely the image of $\varphi_1\wedge\cdots \wedge \varphi_k$ under $a^k$. We conclude that $a$ respects the product structures and therefore induces an isomorphism on cohomology.
\end{proof}

\subsection{Cohomology of bundles of classifying spaces of tori}
Let $d$ be a non-negative integer. Let $S$ be a topological space and let $\Lambda$ be a sheaf of abelian groups on $S$, locally free of rank $d$. Then $\Lambda \otimes_\bZ S^1$ defines a relative torus $\pi\colon T\to S$ with $\rR^1\pi_\ast \bZ = \Hom(\Lambda,\bZ)$. 

\begin{theorem}\label{thm:torus-split}Let $\Lambda$ be a free abelian group of finite rank $d$.  Let $G$ be a subgroup of $\Aut \Lambda$. Let $\pi \colon T \to \rB G$ be the relative torus  with $\rR^1\pi_\ast \bZ = \Hom(\Lambda,\bZ)$. Let $g\colon \rB(T/\rB G)\to \rB G$ be its relative classifying space. Let 
\[
	\cH := \Hom(\Lambda,\,\bZ[\textstyle\frac{1}{d!}])=\rR^1\pi_\ast \bZ[\textstyle\frac{1}{d!}],
\]
in $\Ab(\rB G)$. Then we have isomorphisms
\begin{enumerate}
\item $\rR \pi_\ast \bZ[\frac{1}{d!}] \cong \bigoplus_k ( \wedge^k \cH)[-k]$, 
\item $\rR g_\ast \bZ[\frac{1}{d!}] \cong \bigoplus_k ( \Sym^k \cH)[-2k]$
\end{enumerate}
in $\cD^+(\Ab(\rB G))$.
\end{theorem}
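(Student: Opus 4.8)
The plan is to read off~(1) from Proposition~\ref{prop:equivariant-group-cohomology}, and then to bootstrap~(2) out of~(1) by a relative bar construction together with Quillen's theorem on derived exterior powers. For~(1), I would first make the geometry explicit. The total space of $\pi\colon T\to\rB G$ is the homotopy quotient $EG\times_G(\Lambda\otimes S^1)$; since $\Lambda\otimes S^1$ is a $K(\Lambda,1)$ on which $G$ acts through $\Aut\Lambda$, this total space is a $K(G\ltimes\Lambda,1)=\rB(G\ltimes\Lambda)$, and $\pi$ is induced by the projection $G\ltimes\Lambda\to G$. Under the identification of abelian local systems on $\rB G$ with $\bZ[G]$-modules, $\rR\pi_\ast$ of the constant sheaf computes the cohomology of the fibre $\Lambda$ with its $G$-action, i.e.\ it is the derived functor of $M\mapsto M^\Lambda$ evaluated at $\bZ$. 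Thus~(1) is literally the conclusion of Proposition~\ref{prop:equivariant-group-cohomology} for $M=\bZ$, where $\wedge^k\cH$ denotes the local system attached to the $G$-module $\wedge^k\Hom(\Lambda,\bZ[\frac{1}{d!}])$.

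For~(2), I would use that the relative classifying space is the geometric realization of the relative nerve $[p]\mapsto T\times_{\rB G}\cdots\times_{\rB G}T$ ($p$ factors). Writing $\pi^{(p)}$ for the structure map of the $p$-fold fibre product, $\rR g_\ast\bZ[\frac{1}{d!}]$ is the totalization of the cosimplicial object $[p]\mapsto\rR(\pi^{(p)})_\ast\bZ[\frac{1}{d!}]$. By~(1) each $\rR\pi_\ast\bZ[\frac{1}{d!}]$ is a bounded complex of locally free $\bZ[\frac{1}{d!}]$-modules, so the K\"unneth formula gives $\rR(\pi^{(p)})_\ast\bZ[\frac{1}{d!}]\cong(\wedge^\bullet\cH)^{\otimes p}$ on the nose. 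Hence $\rR g_\ast\bZ[\frac{1}{d!}]$ is computed by the reduced bar construction of the graded-commutative algebra $A:=\wedge^\bullet\cH$, which by~(1) is the \emph{free} graded-commutative $\bZ[\frac{1}{d!}]$-algebra on $\cH$ placed in cohomological degree $1$, with zero differential.

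At this point I would invoke Quillen's theorem on derived exterior powers, in the form of the d\'ecalage isomorphisms relating $\rL\Sym$, $\rL\wedge$ and $\rL\Gamma$ under suspension. Applied to the free graded-commutative algebra $A$, it computes the bar homology as the free graded-commutative algebra on the suspension of $\cH$ into degree $2$, that is, the polynomial algebra $\Sym^\bullet\cH$ with $\Sym^k\cH$ sitting in degree $2k$. The point is that in cohomology one genuinely obtains symmetric (rather than divided) powers, so no denominators beyond the $d!$ already used in~(1) are introduced, and the d\'ecalage exhibits the answer term by term. The cohomology sheaves themselves are harmless: the fibre of $g$ is $(\bC\bP^{\infty})^{d}$, so $\rR^{2k}g_\ast=\Sym^k\cH$ and the odd ones vanish, without any inversion. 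Combining these gives the desired splitting $\rR g_\ast\bZ[\frac{1}{d!}]\cong\bigoplus_k(\Sym^k\cH)[-2k]$ in $\cD^+(\Ab(\rB G))$.

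The main obstacle, and the step where Quillen's theorem is indispensable, is precisely this \emph{splitting}: showing that the bar (equivalently Postnikov) filtration degenerates and splits $G$-equivariantly once $d!$ is inverted, with no surviving higher extensions or divided-power corrections. Concretely, the two points requiring care are (a)~running the K\"unneth identification at the level of complexes, so that the bar construction really sees the formal exterior algebra produced by~(1) rather than merely its cohomology, and (b)~checking that the d\'ecalage isomorphisms are natural enough in $G$ to descend from $\bZ[G]$-modules to local systems on $\rB G$.
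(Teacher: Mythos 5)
Your proposal is correct and is essentially the paper's own proof: part (1) is read off from Proposition \ref{prop:equivariant-group-cohomology} via $T=\rB(G\ltimes\Lambda)$, and part (2) uses the same simplicial model of $\rB(T/\rB G)$, a chain-level K\"unneth decomposition made possible by the splitting in (1) (this is where $d!$ must be inverted), and Quillen's d\'ecalage theorem \cite[Prop.~I.4.3.2.1(i)]{Illusie71} to identify each weight piece with $(\Sym^k\cH)[-2k]$. One terminological caution: what computes $\rR g_\ast$ is the cosimplicial (cobar-type) totalization rather than the bar homology of the algebra $\wedge^\bullet\cH$ --- the latter would produce divided powers $\Gamma^k\cH$, which differ from $\Sym^k\cH$ as $G$-modules even after inverting $d!$ --- but you correctly land on symmetric powers in the cohomological direction, which is exactly what the d\'ecalage $\rL\!\wedge^k(\cH[-1])\cong(\Sym^k\cH)[-k]$ delivers.
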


By Borel \cite[\S19]{Borel53} one has
\[
	\rR^q g_\ast \bZ = \begin{cases}
		\Sym^k \cH & (q=2k) \\
		0 & (\text{$q$ odd})
	\end{cases}
\]
but it seems unlikely that in general $\rR g_\ast \bZ$ splits as a complex of $\bZ[G]$-modules.

\begin{proof}[Proof of Theorem \ref{thm:torus-split}]
Assertion (1) is just a restatement of Proposition \ref{prop:equivariant-group-cohomology}, since $T=\rB(G\ltimes \Lambda)$ and $\pi$ is the map induced by $G\ltimes \Lambda \to G$. 

We will prove (2) using a simplicial computation. For a non-negative integer $i$, let $T_i$ be the $i$-fold fiber product $T \times_{\rB G} \ldots \times_{\rB G} T$ over $\rB G$, with structure map $\pi_i \colon T_i \to \rB G$.  Then $\rB(T/\rB G)$ can be represented by a simplicial topological group, relative over $\rB G$:
\[
\xymatrix{
\cdots 
\ar@<1.5ex>[r]
\ar@<.5ex>[r]
\ar@<-.5ex>[r]
\ar@<-1.5ex>[r]
&
T_2
\ar@<1ex>[r]
\ar@<0ex>[r]
\ar@<-1ex>[r]
&
T_1
\ar@<.5ex>[r]
\ar@<-.5ex>[r]
&
T_0
}
\]
(here we have only drawn the degeneracy maps). If we have complexes $C_i^\bullet$ representing $\rR \pi_{i,\ast} \bZ[\frac{1}{d!}]$ and maps $C_i^\bullet \to C_{i+1}^\bullet$ representing pull-backs along the various degeneracy maps, then the total  complex of the double complex associated to
\[
\xymatrix{
	C_0^\bullet \ar@<.5ex>[r] \ar@<-.5ex>[r]
	& C_1^\bullet \ar@<1ex>[r] \ar@<0ex>[r] \ar@<-1ex>[r]
	& C_2^\bullet \ar@<1.5ex>[r]\ar@<.5ex>[r]\ar@<-.5ex>[r]\ar@<-1.5ex>[r] & \cdots
}
\]
will be quasi-isomorphic to $\rR g_\ast \bZ[\frac{1}{d!}]$.

Using (1) and K\"unneth we have
\[
	\rR \pi_{i,\ast} \bZ[\textstyle\frac{1}{d!}] = \bigoplus_k (\wedge^k (\cH^i))[-k].
\]
Pull-back along the $j$-th degeneracy map $d_j^i: T_{i+1} \to T_i$ induces a map
\[
	\rR \pi_{i,\ast} \bZ[\textstyle\frac{1}{d!}] \longto \rR \pi_{i+1,\ast} \bZ[\frac{1}{d!}].
\]
In degree $k=1$ it is given by
\[
	\cH^i \to \cH^{i+1},\,
	(s_1,\ldots,s_i) \mapsto 
	\begin{cases}
		(0,s_1,\ldots,s_i) & (j=0) \\
		(s_1,\ldots, s_j,s_j,\ldots, s_i) & (1\leq j \leq i ) \\
		(s_1,\ldots,s_i,0) & (j=i+1)
	\end{cases}
\]
and in degree $k$ by applying the functor $\wedge^k$ to the above map. In other words, we have co-simplicial $\bZ[G]$-modules
\[
A_k := \Big[
\xymatrix{
	\wedge^k0 \ar@<.5ex>[r] \ar@<-.5ex>[r]
	& \wedge^k \cH \ar@<1ex>[r] \ar@<0ex>[r] \ar@<-1ex>[r]
	& \wedge^k (\cH^2) \ar@<1.5ex>[r]\ar@<.5ex>[r]\ar@<-.5ex>[r]\ar@<-1.5ex>[r] & \cdots
}\Big]
\]
with associated cochain complexes $C^\bullet(A_k)$, and a quasi-isomorphism
\begin{equation}\label{eq:double-simplicial-decomposed}
	\rR g_\ast \bZ[\textstyle\frac{1}{d!}] \cong \bigoplus_k C^\bullet(A_k)[-k].
\end{equation}
Now $C^\bullet(A_1)$ is quasi-isomorphic to $\cH[-1]$ (as a direct computation of the cohomology of $C^\bullet(A_1)$ shows), and 
 the co-simplicial module $A_k$ is obtained from $A_1$ by composition with the functor $\wedge^k$. These two observations imply that $A_k$ is quasi-isomorphic to $\rL\!\wedge^k (\cH[-1])$, where $\rL\wedge^k$ is the left derived functor of $\wedge^k$ of Dold and Puppe \cite{DoldPuppe61}. By a theorem of Quillen \cite[Prop.~I.4.3.2.1(i)]{Illusie71}
we have a quasi-isomorphism
\[
	C^\bullet(A_k) \cong \rL\!\wedge^k (\cH[-1]) \cong (\Sym^k \cH)[-k],
\]
and together with (\ref{eq:double-simplicial-decomposed}) the theorem follows.
\end{proof}

\subsection{Cohomology of $\cM_1$ with $\bZ[\frac{1}{2}]$-coefficients}

Let $\pi\colon \cE \to \cM_{1,1}$ be the universal elliptic curve. By 
Theorem \ref{thm:stacks-summary} the stack $\cM_{1}$ coincides with the relative classifying space $\rB(\cE/\cM_{1,1})$. It follows that the homotopy type of $\cM_{1}^\an$ coincides with the homotopy type of (the geometric realization of) the simplicial space
\[
\xymatrix{
&\cdots
\ar@<1.5ex>[r]
\ar@<.5ex>[r]
\ar@<-.5ex>[r]
\ar@<-1.5ex>[r]
&
\cE^\an \times_{\cM_1^\an} \cE^\an
\ar@<1ex>[r]
\ar@<0ex>[r]
\ar@<-1ex>[r]
&
\cE^\an
\ar@<.5ex>[r]
\ar@<-.5ex>[r]
&
\cM_{1,1}^\an
}
\]
We have $\cM_{1,1}^\an \simeq \rB \SL_2 \bZ$, and $\cE^\an \to \cM_{1,1}^\an$ is the torus over $\rB \SL_2 \bZ$ corresponding to the standard representation of $\SL_2 \bZ$ on $\bZ^2$. Theorem \ref{thm:torus-split} with $\Lambda =\bZ^2$ and $G=\SL_2 \bZ$ now implies the following corollary. 

\begin{corollary}[Theorem \ref{thm:one} in the introduction] The cohomology of $\cM_{1}^\an$ satisfies
\[
\rH^n( \cM_{1}^\an\!,\, \bZ[{\textstyle\frac{1}{2}}] ) = \bigoplus_{p+2k=n} \rH^p(\SL_2 \bZ,\, \Sym^k  \bZ[\textstyle\frac{1}{2}]^2  )
\]
for all $n$. \qed
\end{corollary}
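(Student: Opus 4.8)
The plan is to read the corollary off directly from Theorem \ref{thm:torus-split}, whose substantive work is already done, by feeding in the three identifications recalled just above. First I would record those inputs: the analytification $\cM_{1,1}^\an$ is homotopy equivalent to $\rB \SL_2\bZ$ (the upper half-plane being a contractible universal cover with deck group $\SL_2\bZ$); the universal elliptic curve $\cE^\an \to \cM_{1,1}^\an$ is exactly the relative torus over $\rB G$ attached to the standard representation $\Lambda = \bZ^2$ of $G = \SL_2\bZ$; and, by Theorem \ref{thm:stacks-summary}(4), $\cM_1^\an$ is the geometric realization of the relative classifying space $\rB(\cE^\an/\cM_{1,1}^\an) = \rB(T/\rB G)$, with structure map $g$. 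Thus the hypotheses of Theorem \ref{thm:torus-split} are met with $d = 2$, so that $\bZ[\frac{1}{d!}] = \bZ[\frac{1}{2}]$.

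Second, I would invoke part (2) of that theorem to obtain the splitting
\[
	\rR g_\ast \bZ[\textstyle\frac{1}{2}] \cong \bigoplus_k (\Sym^k \cH)[-2k]
\]
in $\cD^+(\Ab(\rB G))$, where $\cH = \Hom(\Lambda, \bZ[\frac{1}{2}])$. The essential point is that this is an isomorphism of complexes in the derived category over the base, not merely an identification of the separate higher direct images $\rR^q g_\ast$. It is precisely this derived-category splitting that makes the Leray spectral sequence for $g$ collapse and, more strongly, forces the filtration it induces on the cohomology of the total space to split.

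Third, I would apply $\rH^\bullet(\rB G, -)$ to both sides. Since $\cM_1^\an \simeq \rB(T/\rB G)$, the left-hand side computes $\rH^n(\cM_1^\an, \bZ[\frac{1}{2}])$, while on the right, hypercohomology commutes with the direct sum (for fixed $n$ only the finitely many $k$ with $2k \leq n$ contribute) and converts each shift $[-2k]$ into a degree shift, yielding
\[
	\rH^n(\cM_1^\an,\,\bZ[\textstyle\frac{1}{2}]) \cong \bigoplus_k \rH^{n-2k}(\rB G,\, \Sym^k \cH).
\]
It then remains to identify the summands. Here $\rH^\bullet(\rB \SL_2\bZ, -)$ is group cohomology $\rH^\bullet(\SL_2\bZ, -)$, and the standard representation is symplectically self-dual: the matrix $\left(\begin{smallmatrix} 0 & 1 \\ -1 & 0 \end{smallmatrix}\right)$ furnishes an isomorphism $\cH = \Hom(\bZ^2, \bZ[\frac{1}{2}]) \cong \bZ[\frac{1}{2}]^2$ of $\SL_2\bZ$-modules, whence $\Sym^k \cH \cong \Sym^k(\bZ[\frac{1}{2}]^2)$. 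Writing $p = n - 2k$ reindexes the sum as $\bigoplus_{p + 2k = n} \rH^p(\SL_2\bZ, \Sym^k(\bZ[\frac{1}{2}]^2))$, which is the claim.

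I do not expect a genuine obstacle: all the content sits in Theorem \ref{thm:torus-split}, and what is left is bookkeeping. The only two points worth a word of care are that the decomposition of $\rR g_\ast$ lives in the derived category, so that passing to hypercohomology is legitimate and needs no separate degeneration argument, and the self-duality identification of $\cH$ with the standard representation, which is what permits writing $\Sym^k(\bZ[\frac{1}{2}]^2)$ rather than $\Sym^k$ of its dual.
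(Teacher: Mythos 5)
Your proposal is correct and follows essentially the same route as the paper: the paper likewise identifies $\cM_{1,1}^\an \simeq \rB\SL_2\bZ$, realizes $\cM_1^\an$ as the relative classifying space of the torus $\cE^\an \to \cM_{1,1}^\an$ via Theorem \ref{thm:stacks-summary}, and applies Theorem \ref{thm:torus-split} with $\Lambda = \bZ^2$, $G = \SL_2\bZ$. Your explicit treatment of the two bookkeeping points (passing from the derived-category splitting to hypercohomology, and the symplectic self-duality identifying $\Hom(\bZ^2,\bZ[\frac{1}{2}])$ with the standard representation) is accurate and merely spells out what the paper leaves implicit.
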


\begin{remark}The same arguments will give a similar description of the cohomology of the relative classifying space of the universal abelian variety over $\cA_g$, with coefficients 
in $\bZ[1/(2g)!]$.
\end{remark}

\section{Cohomology in low degree}\label{sec:low-degree}

We now turn to cohomology with $\bZ$-coefficients. Combining computations of Cohen, Callegaro, and Salvetti \cite{CallegaroCohenSalvetti13} and of
Furusawa, Tezuka, and Yagita \cite{Furusawa88} we will determine the integral cohomology groups of $\cM_{1}^\an$ in low degrees.

\subsection{Summary of Cohen-Callegaro-Salvetti}\label{subsec:summary-CCS}

Let $G=\SL_2 \bZ$, and $M_k$ the $G$-module $\Sym^k (\bZ^2)$. Callegaro, Cohen and Salvetti \cite[3.7, 3.8]{CallegaroCohenSalvetti13} have computed the groups $\rH^p( G, M_k) $ up to isomorphism for all $p$ and $k$. For low values of $k$ one has:
\[
\renewcommand{\arraystretch}{1.4}
\begin{tabular}{ c | c  c  c  c  }
$p$ & $p=0$ & $p=1$ & $p=2,4,6,\ldots$ & $p=3,5,7,\ldots$  \\
  \hline                        
$\rH^p(G,\, \bZ)$ &
	$\bZ$ &
	$0$ & 
	$\bZ/4\oplus \bZ/3$ & 
	$0$ \\
$\rH^p(G,\, M_1)$ &
	$0$ &
	$0$ & 
	$\bZ/2$ & 
	$0$ \\
$\rH^p(G,\, M_2)$ &
	$0$ &
	$\bZ\oplus \bZ/2 $ & 
	$0$ & 
	$(\bZ/2)^2$ \\
$\rH^p(G,\, M_3)$ &
	$0$ &
	$\bZ/2$ & 
	$\bZ/2$ & 
	$\bZ/2$  \\
$\rH^p(G,\, M_4)$ &
	$0$ &
	$\bZ\oplus \bZ/2 \oplus \bZ/3$ & 
	$\bZ/4$ & 
	$(\bZ/2)^2\oplus \bZ/3$ 
\end{tabular}
\]
For $p>1$  cupping with a generator $\gamma$ of $\rH^2(\SL_2 \bZ,\, \bZ ) \cong \bZ/12\bZ$ defines an isomorphism $\rH^p(G, M_i) \to \rH^{p+2}(G, M_i)$, making the table $2$-periodic in the $p$-direction.

\subsection{Summary of Furusawa-Tezuka-Yagita}

The ring $\rH^\bullet(\cM_1^\an, \bZ/2\bZ)$ has been computed by Furusawa, Tezuka and Yagita \cite[Thm.~4.11]{Furusawa88}. (Or rather: they compute $\rH^\bullet(\rB \Diff_+ T,\,\bZ/2\bZ)$, but see Remark \ref{rmk:teichmuller}).
They give a presentation with 10 generators, and many relations, see {\it loc.~cit.}. In low degree, the dimensions of the dagger-part are as follows:
\[
 \renewcommand{\arraystretch}{1.4}
\begin{tabular}{ c || c | c | c | c | c | c | c | c | c }
$n$ & $0$ & $1$ & $2$ & $3$ & $4$ & $5$ & $6$ & $7$ & $8$ \\
  \hline                        
$\dim_{\bF_2} \rH^n(\cM_1^\an,\, \bZ/2\bZ)^\dagger$ &
	$0$ & $0$ & $0$ & $1$ & $2$ & $3$ & $4$ & $5$ & $6$\\
\end{tabular}
\]
Moreover, we have $\dim_{\bF_2} \rH^n(\cM_{1,1}^\an,\,\bZ/2\bZ)=1$ for all $n$.

\subsection{Cohomology of $\cM_1$ in low degree}

\begin{theorem} \label{thm:low-degree-repeated}$\rH^n(\cM_{1}^\an,\,\bZ)^\dagger=0$ for $n\leq 3$. Moreover
\begin{enumerate}
\item $\rH^4(\cM_{1}^\an,\,\bZ)^\dagger \cong \bZ/2\bZ$
\item $\rH^5(\cM_{1}^\an,\,\bZ)^\dagger \cong \bZ\oplus\bZ/2\bZ$
\item $\rH^6(\cM_{1}^\an,\,\bZ)^\dagger \cong \bZ/2\bZ$
\item $\rH^7(\cM_{1}^\an,\,\bZ)^\dagger \cong (\bZ/2\bZ)^3$
\item $\rH^8(\cM_{1}^\an,\,\bZ)^\dagger \cong (\bZ/2\bZ)^2$
\item $\rH^9(\cM_{1}^\an,\,\bZ)^\dagger \cong \bZ\oplus (\bZ/2\bZ)^4\oplus \bZ/3\bZ$.
\end{enumerate}
\end{theorem}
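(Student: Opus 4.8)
The plan is to read off the theorem from the Leray spectral sequence (\ref{eq:intro-leray}) in total degrees $n\le 9$, populating the $E_2$-page with the groups $\rH^p(\SL_2\bZ,M_k)$ tabulated in \S\ref{subsec:summary-CCS} and using the mod $2$ Betti numbers of Furusawa--Tezuka--Yagita to control the abutment. Since $\rR^qJ_\ast\bZ$ vanishes for odd $q$, the page is concentrated in rows $q=2k$ with $E_2^{p,2k}=\rH^p(\SL_2\bZ,M_k)$, and the row $q=0$ recovers $\rH^\bullet(\cM_{1,1}^\an,\bZ)$. Because $f$ is a section of $J$, the edge map $J^\ast$ is split injective, which forces every differential into the bottom row to vanish; hence the $\dagger$-part in degree $n$ is filtered with associated graded $\bigoplus_{q\ge2}E_\infty^{p,n-p}$. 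First I would note that the rows $q\ge2$ contribute nothing in total degree $\le3$ (the lowest relevant entry is $E_2^{2,2}=\rH^2(\SL_2\bZ,M_1)$ in degree $4$), giving $\rH^{\le3}(\cM_1^\an,\bZ)^\dagger=0$, and that the naive $E_2$-contributions in degrees $4,\dots,9$ already read off the six groups in the statement.

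Next I would dispose of the free part and the odd torsion using Theorem \ref{thm:one}. Inverting $2$, that theorem splits the spectral sequence, so the rank $b_n$ and the prime-to-$2$ torsion of $\rH^n(\cM_1^\an,\bZ)^\dagger$ agree with those of $\bigoplus_{q\ge2}E_2^{p,n-p}$. In the range $n\le9$ this yields the ranks ($b_5=b_9=1$, from the $\bZ$ in $\rH^1(\SL_2\bZ,M_2)$ and in $\rH^1(\SL_2\bZ,M_4)$, and $b_n=0$ otherwise) together with the single piece of odd torsion, the summand $\bZ/3$ in degree $9$ coming from $\rH^1(\SL_2\bZ,M_4)$. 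No odd differential or prime-to-$2$ extension can intervene.

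It remains to pin down the $2$-primary torsion, and this is where the Furusawa--Tezuka--Yagita dimensions do the work. Write $D_n=\dim_{\bF_2}\rH^n(\cM_1^\an,\bZ/2)^\dagger$ for their values ($D_3,\dots,D_8=1,2,3,4,5,6$) and $c_n$ for the number of cyclic $2$-summands of $\rH^n(\cM_1^\an,\bZ)^\dagger$. By Theorem \ref{thm:one} every differential in the integral spectral sequence has image killed by a power of $2$, hence finite of $2$-power order, so passing to $E_\infty$ preserves rank and odd torsion and can only lower the number of $2$-torsion cyclic summands. In the range $p+q\le9$ every $E_2$-entry with $q\ge2$ has elementary $2$-torsion (the first $\bZ/4$, namely $\rH^2(\SL_2\bZ,M_4)$, sits in total degree $10$), so with $s_n$ the number of $\bZ/2$-summands in $\bigoplus_{q\ge2}E_2^{p,n-p}$ one obtains a chain $c_n\le s_n^{\mathrm{gr}}\le s_n$, the second inequality because subquotients by finite $2$-groups cannot raise the count, the first because an extension cannot. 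On the other hand the universal coefficient theorem gives $D_n=b_n+c_n+c_{n+1}$, and solving recursively from $c_3=0$ with the ranks of the previous paragraph yields $c_4,\dots,c_9=1,1,1,3,2,4$, which coincides with $s_4,\dots,s_9$. Equality forces every inequality above to be an equality: the spectral sequence degenerates at $2$ through degree $9$, all extensions split, and each $2$-torsion summand is exactly $\bZ/2$. Assembling the free part, the $\bZ/3$, and $(\bZ/2)^{c_n}$ reproduces the six groups claimed.

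I expect the third paragraph to be the crux. The potentially nonzero differentials between the even rows (the maps $d_3$, $d_5$, $d_7$, e.g. $\rH^1(\SL_2\bZ,M_2)\to\rH^4(\SL_2\bZ,M_1)$, feeding degrees $5$--$9$) are not killed by hand but forced to vanish by the numerical match, and the very same match is what excludes a stray $\bZ/4$ concealed in an extension. The argument is therefore only as robust as the agreement between the CCS $E_2$-page and the FTY Betti numbers, and the delicate part is bookkeeping: correctly invoking the $2$-periodicity of the CCS table, tracking filtration degrees so that each $E_2$-entry lands in the right total degree, and verifying that the recursion needs the FTY data only through $D_8$ in order to reach $\rH^9$.
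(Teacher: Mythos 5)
Your proposal is correct and takes essentially the same route as the paper: the Leray spectral sequence for $J$ split by the forgetful section into $E^\dagger\oplus E'$, the Callegaro--Cohen--Salvetti groups as $E_2$-input, the Furusawa--Tezuka--Yagita mod-$2$ dimensions as the abutment constraint, and the universal-coefficient relation $D_n=b_n+c_n+c_{n+1}$ to force vanishing of differentials and splitting of the filtration in total degree $<10$. Your third paragraph (the recursion for $c_n$, the comparison with the count $s_n$ of $\bZ/2$-summands on the $E_2$-page, and the monotonicity chain ruling out hidden $\bZ/4$'s) is an explicit write-up of the counting that the paper compresses into ``one now verifies directly.''
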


This is a  more extended version of Theorem \ref{thm:low-degree} in the introduction. Note that the rank one parts in $\rH^5$ and $\rH^9$ are related to the Eisenstein series $E_4$ and $E_6$.

\begin{proof}[Proof of Theorem \ref{thm:low-degree-repeated}]
Consider the Leray spectral sequence for $J\colon \cM_1 \to \cM_{1,1}$ with
\[
	E_2^{p,q} = \begin{cases}
		\rH^p(\SL_2 \bZ,\, \Sym^k (\bZ^2)) & (q=2k) \\
		0 & (\text{$q$ odd}),
	\end{cases}
\]
converging to the cohomology of $\cM_1$. Using the `forgetful' section $\cM_{1,1} \to \cM_1$ one can split the spectral sequence as
\[
	E = E^\dagger \oplus E'
\]
with $E'$ concentrated in degrees $q=0$, and $E^\dagger$ concentrated in degrees $q>0$.  One has that $E^\dagger$ converges to 
$\rH^\bullet(\cM_1^\an, \bZ)^\dagger$, and $E'$ degenerates at $E'_2$ and gives the cohomology $\rH^\bullet(\cM_{1,1}^\an,\bZ)$.

The relevant part of $E^\dagger_2$ is as follows (using the table of \ref{subsec:summary-CCS}):
\[
\renewcommand{\arraystretch}{1.4}
\begin{tabular}{ c |   c   c   c c c  c c }
$q=8$ &  $\bZ\oplus\bZ/2\oplus\bZ/3$ & &  &  & & &  \\
$q=7$ &  $0$ & $0$ &  &  & & &  \\
$q=6$ &
	$\bZ/2$ & 
	$\bZ/2$ & 
	$\bZ/2$ & 
	 & & & 
	  \\
$q=5$  & $0$ & $0$ & $0$ & $0$ & \\
$q=4$  &
	$\bZ\oplus \bZ/2 $ & 
	$0$ & 
	$(\bZ/2)^2$ & 
	$0$ & $(\bZ/2)^2$ & & 
	 \\
$q=3$ & $0$ & $0$ & $0$ & $0$ & $0$ & $0$ & \\
$q=2$  &
	$0$ & 
	$\bZ/2$ & 
	$0$ & 
	$\bZ/2$ & 
	$0$ & 
	$\bZ/2$ & $0$ \\
	\hline
$E_2^{\dagger,p,q}$	 & $p=1$ & $p=2$ & $p=3$ & $p=4$ & $p=5$ & $p=6$ & $p=7$  \\

\end{tabular}
\]
Using the relation
\[
	\dim_{\bF_2} \rH^n(\cM_1^\an,\bZ/2\bZ)^\dagger = 
	\dim_{\bF_2} \rH^n(\cM_1^\an,\bZ)^\dagger\otimes \bF_2 +
	\dim_{\bF_2} \rH^{n+1}(\cM_1^\an,\bZ)^\dagger[2]
\]
one now verifies directly, that $E^\dagger$ must satifies
\begin{enumerate}
\item the differentials originating from an $E_k^{\dagger,p,q}$ with $p+q<10$  vanish
\item the filtration on $\rH^{n}(\cM_1^\an,\bZ)^\dagger$ with $n<10$ split,
\end{enumerate}
for otherwise $\rH^{p+q}(\cM_{1}^\an,\bF_2)^\dagger$ would be smaller than the group obtained by Furusawa, Tezuka and Yagita.
One finds for $n<10$ that $\rH^{n} = \oplus_{p+q=n} E_2^{p,q}$, which proves the theorem.
\end{proof}

When one tries to extend this strategy further, one runs into the problem of higher $2$-power torsion. For example, there is a copy of $\bZ/4\bZ$ in $E_2^{2,8}$ and the above numerical considerations will not be enough to decide if the spectral sequence degenerates in degree $10$ and higher.

\section{Characteristic classes with coefficients in $\bQ$ }
\label{sec:Q-coefficients}

\begin{theorem}
\label{thm:smooth-vanishing-repeated}
Let $\gamma \in \rH^\bullet(\cM_{1}^\an,\bQ)^\dagger$. 
\begin{enumerate}
\item If $S$ is smooth and of finite type over $\bC$
then $c_\gamma(C/S)=0$ for all genus one curves $C/S$;
\item If $\gamma\neq 0$ then there exists a scheme $S$ of finite type over $\bC$ and a $C/S$ so that $c_\gamma(C/S)\neq 0$.
\end{enumerate}
\end{theorem}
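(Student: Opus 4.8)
The plan is to exploit the commutative group-stack structure of $\cM_1=\rB(\cE/\cM_{1,1})$ over $\cM_{1,1}$: since $\cE$ is a commutative group object, for every integer $m$ there is a multiplication-by-$m$ endomorphism $[m]\colon\cM_1\to\cM_1$ over $\cM_{1,1}$, sending an $E$-torsor $C$ to its $m$-fold contracted product, whose class in $\rH^1(S_\et,E)$ is $m[C]$. This map has two effects I would record first. On cohomology, $[m]$ multiplies $\rR^2 J_\ast\bQ=\cH$ by $m$ (it is the transgression of multiplication by $m$ on the fibre torus), and since $\rR J_\ast\bQ\cong\bigoplus_k(\Sym^k\cH)[-2k]$ of Theorem \ref{thm:torus-split} is generated as an algebra in cohomological degree $2$, it acts by $m^k$ on the $k$-th summand; hence, writing a dagger class along the decomposition of Theorem \ref{thm:one} as $\gamma=\sum_{k\geq1}\gamma_k$, we get $[m]^\ast\gamma=\sum_k m^k\gamma_k$. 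Geometrically, $[m]\circ f$ classifies the curve $[m]C$.

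For part (1), let $C/S$ be a genus one curve with $S$ smooth over $\bC$, classified by $f\colon S\to\cM_1$. Since $S$ is regular, Raynaud's Theorem \ref{thm:Raynaud}(2) shows that the torsor $C$ has finite order, say $n$. Then for every $m\in n\bZ$ the curve $[m]C$ is trivial and hence has a section, so that $c_\gamma([m]C/S)=f^\ast[m]^\ast\gamma=0$ because $\gamma$ is dagger. Combining the two effects, $\sum_k (jn)^k f^\ast\gamma_k=0$ in $\rH^\bullet(S^\an,\bQ)$ for all $j\geq1$. The left-hand side is a polynomial in $j$ with coefficients $n^k f^\ast\gamma_k$, vanishing for all positive integers $j$, hence identically; thus $f^\ast\gamma_k=0$ for each $k$ and $c_\gamma(C/S)=f^\ast\gamma=0$.

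For part (2), I would first reduce to a single homogeneous component. If $\gamma\neq0$, let $k_0$ be largest with $\gamma_{k_0}\neq0$; given any family $C_0/S$ with $f_0^\ast\gamma_{k_0}\neq0$, the twists $[m]C_0$ satisfy $c_\gamma([m]C_0/S)=\sum_k m^k f_0^\ast\gamma_k$, a polynomial in $m$ with nonzero leading coefficient, so some twist has $c_\gamma\neq0$. It thus suffices to build one family, necessarily of infinite order, detecting $\gamma_{k_0}$. I would do this by relativizing Raynaud's Example \ref{ex:infinite-order}: fix $N\geq3$, let $E\to Y(N)$ be the universal elliptic curve over the smooth affine modular curve, and take as base the surface $B=E$ carrying the tautological section $x$ of $E_B=E\times_{Y(N)}E$, which is fibrewise non-torsion. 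Gluing $E_B\times\{0\}$ to $E_B\times\{\infty\}$ inside $E_B\times\bP^1$ by translation along $x$ produces, exactly as in Example \ref{ex:infinite-order}, a genus one curve $C_0\to S$ over the singular finite-type scheme $S=B\times N$, where $N=\bP^1/(0\sim\infty)$ is nodal; its Jacobian is pulled back from $Y(N)$ and its torsor class has infinite order.

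The main obstacle is the non-vanishing $f_0^\ast\gamma_{k_0}\neq0$. The node contributes a loop class $\epsilon\in\rH^1(N^\an,\bQ)$ around which the monodromy of $C_0$ is translation by the universal point $x$; via the K\"unneth decomposition $\rH^\bullet(S^\an)=\rH^\bullet(B^\an)\otimes\rH^\bullet(N)$ this monodromy makes $f_0^\ast$ act on the dagger part as the transgression that multiplies by $\epsilon$ and replaces the $\Sym^{k_0}\cH$-coefficients of $\gamma_{k_0}$ by their pairing against the $k_0$-th power of $x$. The restriction $\rH^p(\SL_2\bZ,\Sym^{k_0}\bQ^2)\to\rH^p(Y(N)^\an,\Sym^{k_0})$ is split injective over $\bQ$, the source being the $\SL_2(\bZ/N\bZ)$-invariants of the target, so the heart of the matter is that this pairing against the universal non-torsion point $x$ is non-degenerate. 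This is exactly where infinite order is essential, since for torsion $x$ the relevant class is rationally zero, in accordance with part (1). I would establish the non-degeneracy (morally an Eichler--Shimura statement: modular-symbol classes pair non-trivially with the universal point) through an explicit model for $\rH^\bullet(S^\an,\bQ)$ from Mayer--Vietoris for the nodal gluing together with the Leray spectral sequence of $C_0\to S\to\cM_{1,1}$, and I expect this computation to be the technical core of the argument.
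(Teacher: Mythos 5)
Your part~(1) is correct and takes a genuinely different route from the paper. The paper also starts from Raynaud's Theorem~\ref{thm:Raynaud}, but then trivializes $C$ on a finite Galois \'etale cover $T\to S$ and uses that $\rH^\bullet(S^\an,\bQ)\to\rH^\bullet(T^\an,\bQ)$ is injective for divisible coefficients; you instead twist by the maps $[m]\colon \cM_1\to\cM_1$ and play the eigenvalues $m^k$ against each other, which neatly avoids the transfer argument. One point needs patching: the splitting of Theorem~\ref{thm:one} comes from Quillen's theorem and is not known to be $[m]^\ast$-equivariant, so you should not decompose $\gamma$ ``along the decomposition of Theorem~\ref{thm:one}''. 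Instead, note that $[m]^\ast$ preserves the Leray filtration and acts on $E_\infty^{p,2k}$ by $m^k$; hence for $m\geq 2$ it is diagonalizable, the operators $[m]^\ast$ commute, and the joint eigenspace decomposition provides the classes $\gamma_k$ with $[m]^\ast\gamma_k=m^k\gamma_k$ for all $m$ simultaneously. With that fix the Vandermonde argument goes through.

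Part~(2) has a genuine gap, and it is not in the reduction to the top component $\gamma_{k_0}$ (that twisting step is fine) but in the proposed family: it cannot detect any nonzero dagger class. Your base $S=B\times N$ with $B=E$ has a single nodal factor, hence a single loop class $\epsilon\in\rH^1(N,\bQ)$, and $\epsilon^2=0$. Consider the map of local systems $\Sym^{k_0}\cH=\rR^{2k_0}J_\ast\bQ\to \rR^{2k_0}j_\ast\bQ$ induced by $f_0$; fiberwise over $y\in Y(N)$ this is the pullback $\rH^{2k_0}(\rB E_y,\bQ)\to\rH^{2k_0}(E_y\times N,\bQ)$ along the classifying map of the restricted torsor. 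Each degree-two generator of $\rH^\bullet(\rB E_y,\bQ)$ pulls back into the K\"unneth component $\rH^1(E_y)\otimes\rH^1(N)$: the other two components die because the torsor is trivial both on $E_y\times\{\mathrm{pt}\}$ and on the normalization $E_y\times\bP^1$, while $\rH^2(N)\to\rH^2(\bP^1)$ is injective. Since $\rH^{2k_0}(\rB E_y,\bQ)$ is spanned by $k_0$-fold products of degree-two classes, and any product of $k_0\geq 2$ classes from $\rH^1(E_y)\otimes\rH^1(N)$ carries the factor $\epsilon^{k_0}=0$, the map above is zero for $k_0\geq 2$, and therefore $f_0^\ast\gamma_{k_0}=0$ (its only possible contribution sits in $\rH^1(Y(N),\rR^{2k_0}j_\ast\bQ)$, as $Y(N)$ has rational cohomological dimension one). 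Since the first nonzero class in $\rH^\bullet(\cM_1^\an,\bQ)^\dagger$ lies in degree $5$, i.e.\ $k_0=2$ (the degree-$3$ part $\rH^1(\SL_2\bZ,\bQ^2)$ vanishes), your construction detects nothing at all. In particular, the ``non-degenerate pairing against $x^{k_0}$'' that you expect to be the technical core does not exist: the relevant obstruction is not the order of the section $x$ but the number of independent loops in the base. This is exactly what the paper's construction supplies: it takes $S=\cM_{1,k-1}\times X^{k-2}$, with monodromy around the $i$-th loop given by translation by the $i$-th universal point $P_i$, so that the pullback of a product $\lambda_1\cdots\lambda_{k-2}$ produces the product $\epsilon_1\cup\cdots\cup\epsilon_{k-2}$ of \emph{distinct} loop classes, which survives; this is the content of Lemma~\ref{lemma:concrete-H2}. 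Any repair of your construction must let the number of nodal factors and twisting sections grow with $k_0$, at which point one essentially recovers the paper's proof.
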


We will see in the proof that the curve $C\to S$ in (2) can be taken to be representable by a scheme.

\begin{proof}
(1). Assume $S$ is smooth and of finite type. Let $C\to S$ be a curve of genus one and let $E\to S$ be its Jacobian. By Theorem \ref{thm:Raynaud}, the torsor $C$ is of finite order in $\rH^1(S_\et,E)$. It follows that 
%there is an $N$ so that $C$ comes from a torsor in $\rH^1(S_\et,E[N])$. In particular,
there is a Galois finite \'etale $T\to S$ so that the base change $C_T\to T$ has a section. Since
$\gamma$ lies in the dagger part, we have $c_\gamma(C_T/T)=0$. Since we work with divisible coefficients, the pull-back map
\[
	\rH^\bullet(S^\an,\bQ) \to
	\rH^\bullet_\et(T^\an, \bQ)
\]
is injective (and the image consists of the Galois-invariant classes), and therefore also $c_\gamma(C/S)$ vanishes.

(2) Let $\gamma$ be a nonzero class in $\rH^n(\cM_{1}^\an,\bQ)$ with $n=2k-3$. We will construct a $C/S$ as in the theorem. The construction only depends on $k$. It is inspired by Example \ref{ex:infinite-order} at the end of Section \ref{sec:curves}. The reader may want to read that example first.

Let $\cC$ be the universal curve over $\cM_{1,k-1}$. It is equipped with sections \[
	P_0,\ldots,P_{k-2} \in \cC(\cM_{1,k-1}).
\]
Taking $P_0$ as identity we obtain an elliptic curve $\cE=(\cC,P_0)$ over $\cM_{1,k-1}$. Let $X$ be the nodal curve obtained by pinching $\bP^1$
along $\{0,\infty\}$. Consider the elliptic curve $\cE\times (\bP^1)^{k-2}$ over $\cM_{1,k-1}\times (\bP^1)^{k-2}$. Pinching the pair of sections
\begin{eqnarray*}
	(s; & x_1, \ldots, x_{i-1}, 0 , x_{i+1}, \ldots,  x_{k-2}) \\
	(s + P_i; & x_1, \ldots, x_{i-1}, \infty , x_{i+1}, \ldots,  x_{k-2}) 
\end{eqnarray*}
for all sections $s$ of $\cE$ and $x_j$ of $\bP^1$ we obtain a genus one curve $C$ over $S:=\cM_{1,k-1} \times X^{k-2}$.

We claim that $c_\gamma(C/S) \in \rH^{2k-3}(S^\an ,\bQ ) $ is non-zero. We will use the following lemma.

\begin{lemma} \label{lemma:concrete-H2} Let $T$ be a torus. Let $C\to T^k\times (S^1)^k$ be the principal $T$-bundle which is trivial over the covering space $T^k\times \bR^k \to T^k\times (S^1)^k$, and such that for every $(t_1,\ldots,t_k) \in T^k$ the monodromy along $\{(t_1,\ldots, t_k)\} \times (S^1)^k$ is given by
\[
	\pi_1( (S^1)^k, 1 )  \overset{\rho}{\cong} \bZ^k \to T, \, (n_1,\ldots, n_k) \mapsto n_1 t_1 + \cdots + n_k t_k
\]
Let $f\colon T^k\times (S^1)^k \to \rB T$ be the corresponding classifying map. Then there is a commutative square
\[
\begin{tikzcd}
	\Sym^k \rH^1(T,\,\bZ) \arrow{d}{\sim} \arrow{r}{\sim} &
	\rH^{2k}(\rB T,\, \bZ ) \arrow{d}{f^\ast}  \\
	\rH^1(S_1,\bZ)^{\otimes k} \otimes \Sym^k \rH^1(T,\,\bZ) \arrow[tail]{r} &
	\rH^{2k}(T^k\times (S^1)^k,\,\bZ),
\end{tikzcd}
\]
where the bottom map is (up to sign) the natural inclusion in 
\[
	\rH^{2k}(  T^k\times (S^1)^k,\,\bZ ) = \wedge^{2k} \big(\,\rH^1(T,\,\bZ)^{\oplus k} \oplus \rH^1(S^1,\,\bZ)^{\oplus k} \big),
\]
and the left map is induced by the orientation $\rho$.
\end{lemma}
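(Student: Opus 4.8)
The plan is to reduce the statement to a computation of the classifying class of $C$ and then to expand a single cup product. Write $V := \rH^1(T,\bZ)$ and $\Lambda := \rH_1(T,\bZ)=\pi_1(T)$, so that $V=\Hom(\Lambda,\bZ)$ and $\rB T$ is an Eilenberg--MacLane space $K(\Lambda,2)$ with $\rH^\bullet(\rB T,\bZ)=\Sym^\bullet V$; the top horizontal isomorphism is $\rH^{2k}(\rB T,\bZ)=\Sym^k V$, transgression identifying $\rH^2(\rB T,\bZ)$ with $V$. The map $f$ is classified by the single class $c:=f^\ast\iota\in\rH^2(T^k\times(S^1)^k,\Lambda)$, where $\iota$ is the tautological class of $K(\Lambda,2)$; since $f^\ast$ is a ring homomorphism and $\rH^\bullet(\rB T)$ is generated in degree $2$ by $V$, everything is determined by $c$. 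Concretely, for $\varphi\in V=\rH^2(\rB T,\bZ)$ one has $f^\ast\varphi=\varphi_\ast c$, where $\varphi_\ast\colon\rH^2(-,\Lambda)\to\rH^2(-,\bZ)$ is induced by $\varphi\colon\Lambda\to\bZ$, and hence $f^\ast(\varphi_1\cdots\varphi_k)=\varphi_{1\ast}c\cup\cdots\cup\varphi_{k\ast}c$.

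The heart of the argument is to compute $c$ from the monodromy data. I would decompose $\rH^2(T^k\times(S^1)^k,\Lambda)$ by K\"unneth into the part from $\rH^2(T^k,\Lambda)$, the cross part $\rH^1(T^k,\bZ)\otimes\rH^1((S^1)^k,\bZ)\otimes\Lambda$, and the part from $\rH^2((S^1)^k,\Lambda)$, and show that only the cross part survives. The restriction of $C$ to $T^k\times\{\mathrm{pt}\}$ is trivial, because the basepoint of $(S^1)^k$ lifts into the trivializing cover $T^k\times\bR^k$, so the first part vanishes; and the restriction of $C$ to $\{p\}\times(S^1)^k$ is the flat bundle attached to the monodromy representation, whose class dies in the torsion-free group $\rH^2((S^1)^k,\Lambda)$ since a flat bundle has vanishing real Chern class, so the third part vanishes. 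The cross part is detected on the coordinate $2$-tori $T_{(i)}\times S^1_{(j)}$, where $C$ is flat along $S^1_{(j)}$ with monodromy $\pr_j|_{T_{(i)}}$; this is $\id_T$ for $i=j$ and constant otherwise. As $\id_T$ represents $\id_\Lambda\in\rH^1(T,\Lambda)=\End(\Lambda)$, this yields
\[
	c=\sum_{i=1}^k \id^{(i)}\otimes\beta_i,\qquad\text{so}\qquad \varphi_{j\ast}c=\sum_{i=1}^k \varphi_j^{(i)}\cup\beta_i,
\]
where $\beta_i\in\rH^1(S^1,\bZ)$ is the generator picked out by $\rho$ and $\varphi_j^{(i)}:=\pr_i^\ast\varphi_j\in\rH^1(T^k,\bZ)$.

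It then remains to expand
\[
	f^\ast(\varphi_1\cdots\varphi_k)=\prod_{j=1}^k\Big(\sum_{i=1}^k\varphi_j^{(i)}\cup\beta_i\Big).
\]
Because $\beta_i\cup\beta_i=0$, only the monomials whose circle-indices form a permutation $\sigma\in S_k$ survive. For each such term I would sort the $2k$ degree-one factors, collecting the $\varphi$'s to the front: this produces a global Koszul sign $(-1)^{\binom{k}{2}}$, while the two reorderings (of the $\beta$'s and of the $\varphi$'s by $\sigma$) each contribute $\sgn(\sigma)$ and so cancel. After substituting $\tau=\sigma^{-1}$, the $\varphi$-part becomes the full symmetrization $\sum_{\tau\in S_k}\varphi_{\tau1}^{(1)}\cup\cdots\cup\varphi_{\tau k}^{(k)}$, which is exactly the image of $\varphi_1\cdots\varphi_k$ under the natural map $\Sym^k V\to\wedge^k\rH^1(T^k,\bZ)$, and the remaining factor is $\beta_1\cup\cdots\cup\beta_k$. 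Up to the sign $(-1)^{\binom{k}{2}}$ this is precisely the image of $\varphi_1\cdots\varphi_k$ under the left-then-bottom composite of the square, which proves commutativity up to the stated sign.

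The main obstacle I anticipate is the computation of $c$ in the second step: once $c$ is pinned down, the final expansion is only Koszul-sign bookkeeping. I would therefore take care to justify both vanishing statements cleanly (triviality over $T^k\times\{\mathrm{pt}\}$ and flatness over $\{p\}\times(S^1)^k$), exploiting that $T^k\times(S^1)^k$ is a product of circles, so that its cohomology is an exterior algebra in which $\rH^2$ is detected on the coordinate $2$-tori.
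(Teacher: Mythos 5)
Your proposal is correct and follows essentially the same route as the paper: identify the bundle by its classifying class $c\in\rH^2(\text{base},\Lambda)$, observe that only the cross Künneth component survives (giving $\sum_i \id^{(i)}\otimes\beta_i$ up to sign), and then use multiplicativity of $f^\ast$ together with $\rH^\bullet(\rB T,\bZ)=\Sym^\bullet\rH^1(T,\bZ)$. The paper organizes this by reducing to $k=1$ via Künneth and the cup product first, asserting the class of the basic bundle there (just as you assert the coordinate-torus restriction via ``$\id_T$ represents $\id_\Lambda$'') and leaving the final expansion as ``one verifies directly'', whereas you compute $c$ for general $k$ and carry out the Koszul-sign bookkeeping explicitly --- a difference of organization, not of method.
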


\begin{remark}If $k=1$ and $T=S^1$ then $C$ is the Heisenberg manifold, the quotient of the $3\times 3$ upper triangular real matrices by the subgroup of upper triangular integral matrices.
\end{remark}

\begin{proof}[Proof of Lemma \ref{lemma:concrete-H2}]
Using the cup product on $\rB T$ and the K\"unneth formula for $T^k\times (S^1)^k$ one reduces to the case $k=1$.

Let $\Lambda = \rH_1( T,\,\bZ)$. Then the $T$-bundles on a manifold $S$ are classified by $\rH^2(S,\Lambda)$. For $T=S^1$ this is the classification by Chern class of an $S^1$-bundle, or equivalently, of a complex line bundle. We have $\rH^2( \rB T, \Lambda ) = \Hom( \Lambda, \Lambda )$, and the universal bundle on $\rB T$ corresponds to $\id_\Lambda$.

Now let $S=T\times (S^1)$. Then we have
\[
	\rH^2(S,\Lambda) = \Hom( \wedge^2 (\Lambda \oplus \bZ),\, \Lambda ).
\]
Let $C\to S$ be the $T$-bundle described in the lemma.  This bundle corresponds (up to sign) to the class
\[
	\wedge^2 \big( \Lambda \oplus \bZ \big) \to \Lambda,\,
	( x , n ) \wedge ( y, m ) \mapsto ny-mx
\]
in $\rH^2(S,\,\Lambda)$. Hence for a $\lambda \in \Hom(\Lambda,\bZ)=\rH^2(\rB T,\bZ)$ we have
\[
	f^\ast \lambda =
	\big[ ( x , n ) \wedge ( y, m ) \mapsto n\lambda(y)-m\lambda(x) \big]
	\in \rH^2( S, \bZ ),
\]
and one verifies directly that the square in the lemma commutes (up to sign).
\end{proof}

We now continue with the proof of Theorem \ref{thm:smooth-vanishing-repeated}.
We prove that  $c_\gamma(C/S) \in \rH^{2k-3}(S^\an ,\bQ ) $ is non-zero. Let 
$f\colon S\to \cM_1$ be the map corresponding to the genus one curve $C\to S$. We have a commutative triangle
\[
\begin{tikzcd}
S \arrow{rr}{f} \arrow[swap]{rd}{j} & & \cM_1 \arrow{ld}{J} \\
& \cM_{1,1} & 
\end{tikzcd}
\]
The fibers of $j$ above a point of $\cM_{1,1}$ corresponding to an elliptic curve $E$ is $E^{k-2} \times X^{k-2}$ which is homotopy-equivalent with $E^{k-2} \times (S^1)^{k-2}$.
Since $\cM_{1,1}$ has cohomological dimension $1$ (for $\bQ$-coefficients), the Leray spectral sequence for $J$ and for $j$ both collapse at $E_2$. We have
\[
	\rH^{2k-3}( S^\an, \bQ ) = \rH^1( \cM_{1,1}^\an,  \rR^{2k-4} j_\ast \bQ  ) \oplus \rH^0( \cM_{1,1}^\an,  \rR^{2k-3} j_\ast \bQ  )
\]
and
\[
	\rH^{2k-3}(\cM_1^\an, \bQ ) = \rH^1( \cM_{1,1}^\an, \Sym^{k-2} \rR^1 \pi_\ast \bQ ).
\]
The lemma provides a short exact sequence
\[
	0 \to \Sym^{k-2} \rR^1 \pi_\ast \bQ \overset{f^\ast}{\longto} \rR^{2k-4} j_\ast \bQ \longto \cQ \to 0.
\]
This sequence splits as a sequence of $\SL_2 \bZ$-modules, hence the induced map 
\[
	\rH^1( \cM_{1,1}^\an,\, \Sym^{k-2} \rR^1 \pi_\ast \bQ ) \overset{f^\ast}{\longto} 
	\rH^1( \cM_{1,1}^\an,\, \rR^{2k-4} j_\ast \bQ  )
\]
is injective, and $c_\gamma(C/S) \neq 0$. Finally, adding a suitable level structure on $\cM_{1,1}$ one finds a finite \'etale cover $S'\to S$ with $S'$ a {scheme}, and with $c_\gamma(C'/S') \neq 0$.
\end{proof}

\begin{remark}
For $\gamma$ coming from cusp forms, the vanishing of $c_\gamma(C/S)$ in part (1) of the theorem also follows from weight considerations. Let $S$ be a smooth scheme of finite type over $\bC$. Then the mixed Hodge structure $\rH^n(S^\an,\bQ)$ has weights $\geq n$ by Deligne's theorem \cite{DeligneHodgeII}. Yet, if $n=2k-3$, then the Hodge structure on $S_k \oplus \bar{S}_k$ is pure of weight $k-1 < n$, hence the pull-back map $S_k \oplus \bar{S}_k \to \rH^n(S^\an,\bC)$ is necessarily the zero map, and $c_\gamma(C/S)=0$.

Note that $\cM_1$ itself does not satisfy Deligne's weight inequality, despite $\cM_1$ being smooth.   See Sun \cite{Sun12} for a weaker inequality (in the $\ell$-adic context) that also holds for smooth \emph{stacks}.
\end{remark}

\section{An explicit torsion example}\label{sec:explicit-torsion-classes}

As we have seen in the previous section, the rational cohomology classes of $\cM_1$ give rise to characteristic classes that are rather pathological. For torsion cohomology classes, there is hope that they give non-trivial invariants for some interesting curves $C\to S$. 
 
From the computations in section \ref{sec:low-degree} we see that the first example of a (dagger) torsion class occurs in degree three, where we have
\[
	\rH^3(\cM_1^\an,\bZ/2\bZ)^\dagger = \bZ/2\bZ.
\]
In this section we will construct directly a characteristic class $c$ that associates to any $p\colon C\to S$ with $S$ a scheme over $\bZ[\frac{1}{2}]$ a class $c(C/S) \in \rH^3(S_\et,\bZ/2\bZ)$, and verify that it coincides with the non-zero class in $\rH^3(\cM_1^\an,\bZ/2\bZ)^\dagger$. We will also give an example of a $C\to S$ with $S$ a smooth scheme over $\bC$ and $c(C/S)\neq 0$.

\subsection{A canonical $E[2]$-torsor}\label{subsec:canonical-torsor}
For every elliptic curve $E\to S$ with $S$ a scheme over $\Spec \bZ[\frac{1}{2}]$ we will construct a canonical $E[2]$-torsor $T(E/S)$ on $S_\et$. The torsor only depends on the structure of $E[4]\to S$.

Let $M$ be a free module of rank $2$ over $\bZ/4\bZ$. Let $M^\ast \subset M$ be the set of elements of exact order $4$. Let $M[2]$ be the submodule of $2$-torsion and $M[2]^\ast$ the subset of elements of exact order $2$. Multiplication by $2$ induces a map
\[
	\varphi \colon M^\ast / \langle -1 \rangle \to M[2]^\ast.
\]
This map is a $2:1$ cover. The group $M[2]$ acts on the fibers by addition in $M[4]$. (Note that an element $a\in M[2]^\ast$ acts as the identity on the fiber $\varphi^{-1}(a)$, and as an involution on the two other fibers). Consider the set $T$ of (unordered) partitions of $\varphi$ into a disjoint union of two $1:1$ covers. Then $T$ consists of $4$ elements which are permuted transitively by $M[2]$. The construction of $T$ is natural, and we see that $T$ is an $\Aut M$-invariant $M[2]$-torsor. 

After the choice of an identification $M \cong (\bZ/4\bZ)^2$, one computes that
\[
	\left(\begin{array}{cc}1 & 1 \\0 & 1\end{array}\right) \in \GL_2 (\bZ/4\bZ)
\]
permutes the elements of $T$ cyclically. In particular, the torsor $T$ has no fixed points and hence is non-trivial. (In fact, one has 
$\rH^1( \GL_2(\bZ/4\bZ),\, (\bZ/2\bZ)^2 ) = \bZ/2\bZ$, so that $T$ is, up to isomorphism, the unique non-trivial torsor).

Now let $S$ be an algebraic stack over $\bZ[\frac{1}{2}]$ and $E \to S$ an elliptic curve. Applying the above construction to $E[4]$ one obtains a \emph{canonical} $E[2]$-torsor $T(E/S)$ on $S_\et$.

\begin{proposition}\label{prop:torsor-non-trivial}
Let $\pi\colon \cE \to \cM_{1,1}$ be the universal elliptic curve. Then
\[
	\rH^1(\cM_{1,1}^\an,\, \cE[2] ) \cong \bZ/2\bZ
\]
and the non-zero element is the class of the torsor $T(\cE/\cM_{1,1})$.
\end{proposition}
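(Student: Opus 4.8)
The plan is to reduce to a group-cohomology computation, evaluate it using the Callegaro--Cohen--Salvetti table of \S\ref{subsec:summary-CCS}, and then detect non-triviality of the specific torsor by a fixed-point criterion.

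First, since the upper half-plane $\mathbb H$ is contractible and is the universal cover of $\cM_{1,1}^\an$ with deck group $\SL_2\bZ$, we have $\cM_{1,1}^\an \simeq \rB\SL_2\bZ$, and local systems correspond to $\SL_2\bZ$-modules. Under this dictionary $\cE[2]$ is the mod-$2$ reduction $\bF_2^2$ of the standard representation $M_1=\bZ^2$ (the monodromy on $E[2]=H_1(E,\bZ)/2$ is the reduction of that on $H_1$), so that $\rH^1(\cM_{1,1}^\an,\cE[2]) \cong \rH^1(\SL_2\bZ,\bF_2^2)$. I will also use that the \'etale torsor $T(\cE/\cM_{1,1})$ analytifies to a topological $\cE[2]$-torsor representing the same class, via the comparison of \'etale and singular cohomology with finite coefficients.

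Second, to compute the group I would apply the long exact cohomology sequence to the short exact sequence of $\SL_2\bZ$-modules
\[
	0 \to M_1 \xrightarrow{\ \cdot 2\ } M_1 \to \bF_2^2 \to 0 .
\]
From the table in \S\ref{subsec:summary-CCS} one reads $\rH^0(\SL_2\bZ,M_1)=\rH^1(\SL_2\bZ,M_1)=0$ and $\rH^2(\SL_2\bZ,M_1)=\bZ/2$. The vanishing of $\rH^1(\SL_2\bZ,M_1)$ makes the connecting map $\rH^1(\SL_2\bZ,\bF_2^2)\to \rH^2(\SL_2\bZ,M_1)$ injective with image the $2$-torsion of $\rH^2(\SL_2\bZ,M_1)$; since the latter is $\bZ/2$ this yields $\rH^1(\cM_{1,1}^\an,\cE[2])\cong\bZ/2$. (Alternatively one can compute directly through the amalgam $\SL_2\bZ\cong\bZ/4 *_{\bZ/2}\bZ/6$ and Mayer--Vietoris, using that the centre acts trivially on $\bF_2^2$ because $-1\equiv 1$, while the order-$4$ and order-$6$ generators reduce respectively to a transposition and a $3$-cycle of the three nonzero vectors.)

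Third, as the group is $\bZ/2$ it remains only to see that the class of $T(\cE/\cM_{1,1})$ is nonzero, i.e.\ that the torsor is non-trivial. An $\cE[2]$-torsor over $\rB\SL_2\bZ$ is trivial precisely when its fibre, as an $\SL_2\bZ$-set, has a fixed point. But the computation in \S\ref{subsec:canonical-torsor} shows that the image of $\left(\begin{smallmatrix}1&1\\0&1\end{smallmatrix}\right)$ in $\GL_2(\bZ/4\bZ)$ permutes the four elements of $T$ cyclically, so $T(\cE/\cM_{1,1})$ admits no global section and is non-trivial; it therefore represents the unique nonzero element of $\bZ/2$. The group computation is immediate from the cited results, so the only point genuinely requiring care is the identification of the analytic torsor with its equivariant cohomology class together with the fixed-point criterion for triviality, which I expect to be the main (though mild) obstacle.
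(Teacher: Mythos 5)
Your proposal is correct and follows essentially the same route as the paper: identify $\rH^1(\cM_{1,1}^\an,\cE[2])$ with $\rH^1(\SL_2\bZ,(\bZ/2\bZ)^2)$, compute that this group is $\bZ/2\bZ$ using the Callegaro--Cohen--Salvetti data, and deduce non-triviality of $T(\cE/\cM_{1,1})$ from the fact that $\left(\begin{smallmatrix}1&1\\0&1\end{smallmatrix}\right)$ permutes its four elements cyclically, so the torsor has no fixed point. Your long exact sequence argument via $0\to\bZ^2\xrightarrow{2}\bZ^2\to\bF_2^2\to 0$ is merely an explicit filling-in of the paper's citation of \S\ref{subsec:summary-CCS}, whose table only lists integral coefficients.
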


\begin{proof}
We have $\rH^1(\cM_{1,1}^\an,\, \cE[2] )=\rH^1(\SL_2 \bZ,\,(\bZ/2\bZ)^2)$.
Now, for the first statement, see for example \S \ref{subsec:summary-CCS}. For the second statement, it suffices to observe that by the above we have that the monodromy action of
\[
	\left(\begin{array}{cc}1 & 1 \\0 & 1\end{array}\right) \in \SL_2 \bZ
\]
has no fixed points, so $T(\cE/\cM_{1,1})$ has no sections.
 \end{proof}

\subsection{The characteristic class $c$}
We now use the above torsor to construct for every algebraic stack $S$ over $\bZ[\frac{1}{2}]$ and for every curve of genus one $p\colon C\to S$ a class $c(C/S) \in \rH^3(S_\et,\,\bZ/2\bZ)$. 

Let $E\to S$ be the Jacobian of $p\colon C\to S$. Let $T(E/S)$ be the associated $E[2]$-torsor constructed above. Note that there is a canonical isomorphism
\[
	\rR^1 p_\ast (\bZ/2\bZ)  = E[2]
\]
on $S_\et$ (since $E[2]$ is canonically isomorphic to its $\bF_2$-dual). In particular, the torsor $T(E/S)$ defines a class
\[
	t \in \rH^1( S_\et,\, \rR^1 p_\ast \bZ/2\bZ ).
\]	
Now the $E_2$-page of the Leray spectral sequence for $p\colon C \to S$ provides a differential
\[
	d\colon \rH^1( S_\et,\, \rR^1 p_\ast \bZ/2\bZ ) \to \rH^3( S_\et,\,  \bZ/2\bZ ),
\]
and we define $c(C/S)=d(t) \in \rH^3( S_\et,\,  \bZ/2\bZ )$. 

\begin{proposition}$c(C/S)$ is compatible with base change along arbitrary maps $S'\to S$ of schemes over $\bZ[\frac{1}{2}]$. If $C\to S$ has a section, then $c(C/S)=0$.
\end{proposition}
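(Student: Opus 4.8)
The plan is to treat the two assertions separately: the base-change compatibility will be reduced to the naturality of each of the three ingredients entering the definition $c(C/S)=d(t)$, while the vanishing will follow from a standard edge-map argument in the Leray spectral sequence.

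\emph{Base change.} Fix a map $g\colon S'\to S$ of schemes over $\bZ[\frac{1}{2}]$ and form the cartesian square
\[
\begin{tikzcd}
C' \arrow{r}{g'} \arrow[swap]{d}{p'} & C \arrow{d}{p} \\
S' \arrow{r}{g} & S
\end{tikzcd}
\]
with $C' = C\times_S S'$. I would verify that each step producing $c(C/S)$ commutes with $g^\ast$. First, the Jacobian commutes with base change, so $\Pic^0_{C'/S'} = E\times_S S' =: E'$ by Proposition \ref{prop:curves-versus-torsors}; consequently $E'[4] = g^\ast E[4]$, and since the torsor $T(E/S)$ is manufactured from $E[4]$ by an \'etale-local, $\Aut$-equivariant recipe, its formation is functorial and $T(E'/S') \cong g^\ast T(E/S)$. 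Second, the canonical identification $\rR^1 p_\ast \bZ/2\bZ \cong E[2]$ is natural, and by the proper base change theorem (applicable since $p$ is proper and $\bZ/2\bZ$ is torsion, with $E[2]$ finite \'etale as $2$ is invertible) the base-change maps $g^\ast \rR^q p_\ast \bZ/2\bZ \to \rR^q p'_\ast \bZ/2\bZ$ are isomorphisms for all $q$; hence $t' = g^\ast t$ under these identifications. Third, these same isomorphisms assemble $g^\ast$ into a morphism of Leray spectral sequences, so the differential $d=d_2\colon E_2^{1,1}\to E_2^{3,0}$ commutes with pullback. Combining the three steps gives $g^\ast c(C/S) = d(g^\ast t) = d(t') = c(C'/S')$.

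\emph{Vanishing.} Suppose $p\colon C\to S$ admits a section $s$. Then $s^\ast p^\ast = \id$ on $\rH^\bullet(S_\et,\bZ/2\bZ)$, so the pullback $p^\ast$ is injective. Since the fibers of $p$ are geometrically connected we have $\rR^0 p_\ast\bZ/2\bZ = \bZ/2\bZ$, and the bottom-row edge homomorphism $E_2^{n,0}=\rH^n(S_\et,\bZ/2\bZ)\to\rH^n(C_\et,\bZ/2\bZ)$ is exactly $p^\ast$. This edge map factors as the surjection $E_2^{n,0}\twoheadrightarrow E_\infty^{n,0}$ followed by the inclusion $E_\infty^{n,0}\hookrightarrow \rH^n(C_\et,\bZ/2\bZ)$; injectivity of the composite forces $E_2^{n,0}=E_\infty^{n,0}$, i.e.\ every differential landing in the bottom row vanishes. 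In particular $d=d_2\colon E_2^{1,1}\to E_2^{3,0}$ is zero, whence $c(C/S)=d(t)=0$.

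The main obstacle I anticipate is bookkeeping rather than conceptual. The delicate points are to phrase the construction of $T$ as a genuine natural transformation of the functor $M\mapsto T$ on free rank-two $\bZ/4\bZ$-modules, compatible with the $\Aut M$-action so that it descends along $E[4]\to S$ and so that its formation provably commutes with \emph{arbitrary} base change; and to record the proper base-change isomorphisms as a true morphism of spectral sequences, so that commutation with the differential $d$ is justified and not merely plausible. Both are standard, but should be stated with care.
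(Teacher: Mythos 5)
Your proof is correct and follows essentially the same route as the paper, which simply states that base-change compatibility is clear from the construction and that a section makes the bottom row of the Leray spectral sequence split off, killing the differential $d$. Your elaborations (functoriality of $T(E/S)$, proper base change, the edge-map/retraction argument showing $E_2^{n,0}=E_\infty^{n,0}$) are exactly the standard details the paper leaves implicit.
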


\begin{proof}
It is clear from the construction that $c(C/S)$ is compatible with base change along any $S'\to S$. If $C\to S$ has a section, then the bottom row of the Leray spectral sequence for $p\colon C\to S$ splits off, and the map $d$ becomes the zero map, hence $c(C/S)=d(t)=0$.
\end{proof}

\begin{proposition}Let $\gamma$ the unique non-zero element of $\rH^3(\cM_{1}^\an,\bZ/2\bZ)^\dagger$. Then for every scheme  $S$ of finite type over $\bC$ and for every curve of genus one $C\to S$, one has $c(C/S) = c_\gamma(C/S)$ in 
$\rH^3(S_\et,\, \bZ/2\bZ) = \rH^3(S^\an,\, \bZ/2\bZ)$.
\end{proposition}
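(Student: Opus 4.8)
The plan is to prove the stronger statement that the two classes agree at the universal level, and then to deduce the general case by naturality. First I would reduce everything to a single equality in $\rH^3(\cM_1^\an,\bF_2)$. Both $c$ and $c_\gamma$ are compatible with base change, and for $S$ of finite type over $\bC$ the comparison isomorphism $\rH^3(S_\et,\bF_2)\cong\rH^3(S^\an,\bF_2)$ (already used to phrase the proposition) is natural in $S$. Applying the construction of $c$ to the universal curve $\cC\to\cM_1$ — the construction is defined for algebraic stacks over $\bZ[\frac12]$ — produces a universal class $\tilde c:=c(\cC/\cM_1)\in\rH^3((\cM_1)_\et,\bF_2)\cong\rH^3(\cM_1^\an,\bF_2)$. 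For $f\colon S\to\cM_1$ classifying $C/S$ one then has $c(C/S)=f^\ast\tilde c$ by functoriality and $c_\gamma(C/S)=f^\ast\gamma$ by definition, so it suffices to prove $\tilde c=\gamma$.

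Next I would check that $\tilde c$ lies in the dagger part. The forgetful map $\cM_{1,1}\to\cM_1$ classifies the universal elliptic curve $\cE$ viewed as a genus one curve, which carries its zero section; hence $c(\cE/\cM_{1,1})=0$ by the section-vanishing property, that is, the pullback of $\tilde c$ along the forgetful section vanishes. Since that section splits off the image of $J^\ast$, its kernel is exactly the dagger part, so $\tilde c\in\rH^3(\cM_1^\an,\bF_2)^\dagger=\bF_2$. It therefore remains only to show $\tilde c\neq0$.

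The heart of the argument is to compute the leading term of $\tilde c$ along the Leray filtration for $J$. By the low-degree analysis of \S\ref{sec:low-degree} the dagger part of $\rH^3$ is the single surviving term $E_2^{1,2}=\rH^1(\cM_{1,1}^\an,\cE[2])=\rH^1(\SL_2 \bZ,\bF_2^2)$, whose generator is the class of $T(\cE/\cM_{1,1})$ by Proposition \ref{prop:torsor-non-trivial}; this is the nonzero element $\gamma$. On the other side $\tilde c=d_2(t)$ is built from the Leray sequence of $f\colon\cC\to\cM_1$ and the torsor class $t=[T(E/\cM_1)]=J^\ast[T(\cE/\cM_{1,1})]$ in $\rH^1(\cM_1,\rR^1 f_\ast\bF_2)=\rH^1(\cM_1,J^\ast\cE[2])$. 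Since the odd direct images $\rR^{2k+1}J_\ast\bF_2$ vanish, $J^\ast$ identifies $\rH^1(\cM_{1,1}^\an,\cE[2])\isomto\rH^1(\cM_1^\an,J^\ast\cE[2])$, so $t$ is nonzero. Now over each fibre $\rB\cE$ of $J$ the universal curve restricts to the universal torus bundle with fibre $\cE$ over $\rB\cE$ (a path-loop fibration, with contractible total space), for which the fibrewise transgression $\rH^1(\cE,\bF_2)=\cE[2]\isomto\cE[2]=\rH^2(\rB\cE,\bF_2)$ is the identity. This transgression is the associated graded of $d_2$ with respect to the $J$-filtration; applying $\rH^1(\SL_2 \bZ,-)$, the differential $d_2$ carries the piece $\rH^1(\SL_2 \bZ,\cE[2])$ in bidegree $(1,0)$ that contains $t$ isomorphically onto the piece $E_2^{1,2}=\rH^1(\SL_2 \bZ,\cE[2])$ in bidegree $(1,2)$. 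Hence the image of $\tilde c$ in $E_\infty^{1,2}$ is the generator, so $\tilde c\neq0$, and combining with the previous step $\tilde c=\gamma$.

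The main obstacle is this last step: making precise that the differential $d_2$ of the Leray sequence for $f\colon\cC\to\cM_1$ is, on the associated graded for the $J$-filtration, the fibrewise transgression of the universal torus bundle. Concretely one must organize the two nested fibrations $\cC\to\cM_1\to\cM_{1,1}$ — whose composite is the identity on $\cM_{1,1}$, so that the relative fibre is the contractible total space of the path-loop fibration over $\rB\cE$ — and verify that the only contribution of $d_2(t)$ to the dagger graded piece is this transgression, which is an isomorphism. The self-duality $\cE[2]\cong\cE[2]^\vee$ already used to identify $\rR^1 f_\ast\bF_2$ with $\cE[2]$, together with Proposition \ref{prop:torsor-non-trivial}, guarantees that the two copies of $\rH^1(\SL_2 \bZ,\cE[2])$ — the one housing $t$ and the one housing $\gamma$ — are matched by this isomorphism.
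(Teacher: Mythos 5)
Your first two steps --- passing to the universal class $\tilde c=c(\cC/\cM_1)$ via base-change compatibility, and placing it in the dagger part by pulling back along the forgetful section (where the pullback of the universal curve is $\cE$ with its zero section) --- are exactly the paper's reduction, and they are fine. The divergence, and the problem, is in the non-vanishing step. There you assert that the $d_2$-differential of the Leray spectral sequence of $f\colon\cC\to\cM_1$ is, on the associated graded of the Leray filtration for $J$, induced by the fibrewise transgression of the universal torus bundle $\cE\cE^\an\to\rB\cE^\an$, and you yourself flag this as ``the main obstacle'' that ``one must verify.'' As written this is not a proof but a statement of what remains to be proved, and it is precisely the technical heart of your argument. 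The claim is, I believe, true, but it is not a routine citation: one has to (i) describe $d_2$ on the row $q=1$ as composition with the $k$-invariant class in $\Ext^2_{\cM_1}(\rR^1 f_\ast\bF_2,\bF_2)$ of the truncation triangle of $\tau_{\leq 1}\rR f_\ast\bF_2$, (ii) check that the $J$-Leray (truncation) filtration is compatible with this composition pairing, so that filtration degrees add and graded pieces pair through graded pieces, and (iii) identify the image of the $k$-invariant in the lowest graded piece $\rH^0\bigl(\cM_{1,1}^\an,\cHom(\cE[2],\rR^2 J_\ast\bF_2)\bigr)$ with the fibrewise transgression isomorphism. None of this is done, and without it the step ``$d_2$ carries the piece containing $t$ isomorphically onto $E_2^{1,2}$'' is unsupported.

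It is worth knowing that the paper avoids this double-fibration bookkeeping entirely by an indirect counting argument in the Leray spectral sequence of $f\colon\cM_{1,1}\to\cM_1$ itself: since $\cE[2]$ has no global sections one has $E_2^{0,1}=0$, so $E_2^{2,0}=\rH^2(\cM_1^\an,\bZ/2\bZ)\cong\bZ/2\bZ$ survives to $E_\infty$; but $\rH^2(\cM_{1,1}^\an,\bZ/2\bZ)\cong\bZ/2\bZ$ is then already exhausted by $E_\infty^{2,0}$, forcing $E_\infty^{1,1}=0$, i.e.\ forcing $d_2\colon E_2^{1,1}\to E_2^{3,0}$ to be \emph{injective}. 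Since $t\neq 0$ by Proposition \ref{prop:torsor-non-trivial}, it follows at once that $\tilde c=d_2(t)\neq 0$, with no need to compute $d_2$ at all. I would recommend you either substitute this counting argument for your step 3, or, if you want to keep the transgression picture (which has the merit of also re-proving that $\gamma$ is the image of the torsor class under $E_2^{1,2}\to\rH^3$, the content of the paper's subsequent remark), supply the derived-category argument (i)--(iii) above in full.
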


\begin{proof}
Let $f \colon \cM_{1,1} \to \cM_1$ be the map that forgets a point. This coincides with the universal genus one curve over $\cM_1$. Clearly the construction of the class $c$ carries over to this situation, and yields a class
\[
	c(\cM_{1,1}/\cM_1) \in \rH^3( \cM_1^\an,\, \bZ/2\bZ ).
\]
Since $c(C/S)=0$ whenever $C\to S$ admits a section, we see that
\[
	c(\cM_{1,1}/\cM_1) \in \rH^3( \cM_1^\an,\, \bZ/2\bZ )^\dagger \cong \bZ/2\bZ.
\]
It therefore suffices to prove that $c(\cM_{1,1}/\cM_1)$ is non-zero. 

Let $\cE \to \cM_1$ be the Jacobian of $\cM_{1,1} \to \cM_1$. The associated $\cE[2]$-torsor $T(\cE/\cM_1)$ defines a class
\[
	t \in \rH^1 (\cM_1^\an,\, \cE[2] ) = \rH^1( \SL_2 \bZ, (\bZ/2\bZ)^2 ) \cong \bZ/2\bZ.
\]
which is non-zero by Proposition \ref{prop:torsor-non-trivial}.

Now consider the Leray spectral sequence for $\cM_{1,1} \to \cM_1$:
\[
	E_2^{p,q} = \rH^p( \cM_{1}^\an,\, \rR^q p_\ast \bZ/2\bZ )  \Longrightarrow \rH^{p+q}( \cM_{1,1}^\an,\, \bZ/2\bZ ).
\]
We have $\rH^2( \cM_{1,1}^\an, \bZ/2\bZ ) = \bZ/2\bZ$. Since $\cE[2]$ has no global sections we have $E_2^{0,1} = 0$. By the computations in low degree in Theorem \ref{thm:low-degree} we have $E_2^{2,0}=\bZ/2\bZ$. This $E_2^{2,0}$ contributes to 
$\rH^2( \cM_{1,1}^\an, \, \bZ/2\bZ)$, so $E_2^{1,1}$ cannot contribute. But this implies that the differential $d\colon E_2^{1,1} \to E_2^{3,0}$ must be injective. In other words,
\begin{equation}\label{eq:E-2-diff}
	\rH^1( \cM_1^\an,\, \cE[2] ) \overset{d}{\longto} \rH^3( \cM_{1}^\an,\, \bZ/2\bZ )
\end{equation}
is injective, and $c(\cM_{1,1}/\cM_1)$, being the image of $t$, is non-zero.
\end{proof}

\begin{remark}
The proof also shows that the non-trivial class in $\rH^3(\cM_1^\an,\,\bZ/2\bZ)^\dagger$ is the image of the class of the torsor $T$ under
\[
	\rH^1(\SL_2 \bZ,\, (\bZ/2\bZ)^2 ) = \rH^1(\cM_{1,1}^\an,\, \rR^1 J_\ast \bZ/2\bZ ) \longto \rH^3(\cM_1^\an,\,\bZ/2\bZ ),
\]
where the last map comes from the Leray spectral sequence for $J\colon \cM_1 \to \cM_{1,1}$.
\end{remark}

\subsection{An explicit non-vanishing example}

We now give an example of a \emph{smooth} scheme $S$ over $\bC$ and a curve of genus one $C\to S$ such that $c(C/S)$ is non-zero in $\rH^3(S^\an,\,\bZ/2\bZ)$.

Let $E$ be an elliptic curve over $\bC$ and let $X$ be an irreducible proper smooth scheme over $\bC$ such that
\begin{enumerate}
\item $\rH^1(X,\cO_X)=0$,
\item $\pi_1(X,x) = \bZ/2\bZ$.
\end{enumerate}
Such $X$ exist, for example one can take $X$ to be an Enriques surface. It follows from these conditions that the Albanese variety of $X$ is trivial, and that the torsion subgroup of $\rH^2(X^\an,\bZ)$ is $\bZ/2\bZ$. Let $\rH^2(X^\an,\bZ/2\bZ)^t$ be the image of $\rH^2(X^\an,\bZ)_\tors$ in $\rH^2(X^\an,\bZ/2\bZ)$. This is a one-dimensional subspace. 

Let $X'\to X$ be the universal cover of $X$. This is a Galois cover of degree $2$, and we denote by $\sigma$ the involution of $X'$. Let $P\in E(\bC)$ be a point of order $2$, and let $C\to X$ be the quotient of $E\times X'$ by the involution $(e,x) \mapsto (e+P,\sigma x)$. Then $C\to X$ is an $E$-torsor. Since the Albanese variety of $X$ vanishes we have $E(X)=E(\bC)$. It follows that the map
\[
	\rH^1(X^\an,\, E[2]) \to \rH^1(X^\an,\,E)
\]
is injective and hence that the torsor $C\to X$ is non-trivial. Let $f\colon X\to \rB E$ be the classifying map corresponding to $C$.

\begin{lemma}\label{lemma:enriques-torsion-class}
The image of $f^\ast \colon \rH^2(\rB E,\,\bZ/2\bZ) \to \rH^2(X,\,\bZ/2\bZ)$ is the rank one submodule
$\rH^2(X,\,\bZ/2\bZ)^t$.
\end{lemma}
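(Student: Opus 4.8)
The plan is to factor the classifying map $f$ through the $2$-torsion subgroup and then to compute on $\rH^2(-,\bZ/2\bZ)$ by recognizing the relevant pullback as a Bockstein. First I would record the factorization. By construction the $E$-torsor $C\to X$ is induced from the double cover $X'\to X$: if $\phi\colon\bZ/2\bZ\injto E[2]$ denotes the injection $1\mapsto P$ (injective since $P$ has order $2$), then one checks directly from the definition of $C$ that $C\cong X'\times_{\bZ/2\bZ}E$, the $E$-bundle associated to the principal $\bZ/2\bZ$-bundle $X'$ along $\bZ/2\bZ\xrightarrow{\phi}E[2]\xrightarrow{\iota}E$. Hence, writing $w\colon X\to\rB(\bZ/2\bZ)$ for the classifying map of $X'$ and $\iota\colon E[2]\injto E$ for the inclusion, the map $f$ factors as
\[
	X\xrightarrow{\,w\,}\rB(\bZ/2\bZ)\xrightarrow{\rB\phi}\rB E[2]\xrightarrow{\rB\iota}\rB E.
\]

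Next I would compute the three pullbacks on the relevant cohomology. Identify $\rH^2(\rB E,\bZ/2\bZ)\cong\Hom(E[2],\bZ/2\bZ)\cong\rH^1(\rB E[2],\bZ/2\bZ)$. The key step is the identity $(\rB\iota)^\ast\lambda=\beta\lambda$, where $\beta$ is the mod $2$ Bockstein: this is the standard fact that $\bR\bP^\infty\to\bC\bP^\infty$ carries the degree-two generator to the square of the degree-one generator, applied to each of the two circle factors of $E^\an\simeq(S^1)^2$. Combining with $(\rB\phi)^\ast\lambda=\lambda(P)\,a$ (where $a$ generates $\rH^1(\rB(\bZ/2\bZ),\bZ/2\bZ)$) and $w^\ast a=\epsilon$, the generator of $\rH^1(X,\bZ/2\bZ)=\Hom(\pi_1(X),\bZ/2\bZ)$ classifying $X'$, naturality of $\beta$ yields
\[
	f^\ast\lambda=\lambda(P)\cdot\beta(\epsilon)\in\rH^2(X,\bZ/2\bZ).
\]
As $P\neq0$ there is some $\lambda$ with $\lambda(P)\neq0$, so the image of $f^\ast$ is precisely the line spanned by $\beta(\epsilon)$.

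It then remains to identify $\langle\beta(\epsilon)\rangle$ with $\rH^2(X,\bZ/2\bZ)^t$. Since $\rH^1(X,\bZ)=\Hom(\pi_1(X),\bZ)=0$, the long exact sequence for $0\to\bZ\xrightarrow{2}\bZ\to\bZ/2\bZ\to0$ shows that the integral Bockstein $\tilde\beta\colon\rH^1(X,\bZ/2\bZ)\to\rH^2(X,\bZ)$ is injective with image the full $2$-torsion subgroup; as $\rH^2(X,\bZ)_\tors=\bZ/2\bZ$ this image is all of $\rH^2(X,\bZ)_\tors$, generated by $\tilde\beta(\epsilon)$. Its reduction modulo $2$ is $\beta(\epsilon)$, and this is nonzero: an order-two class cannot lie in $2\rH^2(X,\bZ)$ in the absence of $4$-torsion. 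Therefore $\beta(\epsilon)$ spans the one-dimensional space $\rH^2(X,\bZ/2\bZ)^t$, which is exactly the image of $f^\ast$.

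The main obstacle is the cohomological input $(\rB\iota)^\ast=\beta$, on which the whole argument pivots: everything downstream is naturality once this identification is in place. The subtler bookkeeping is to carry the identification through the two circle factors of $E$ and, at the end, to verify using $\rH^1(X,\bZ)=0$ and $\rH^2(X,\bZ)_\tors=\bZ/2\bZ$ that reduction modulo $2$ does not annihilate the torsion generator $\tilde\beta(\epsilon)$; both are exactly the hypotheses placed on $X$ (e.g.\ an Enriques surface).
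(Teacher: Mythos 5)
Your proof is correct, and it takes a genuinely different route from the paper's. The paper argues analytically: writing $E=\bC/\Lambda$, it uses the exponential sequence $0\to\Lambda\to\cO_{X^\an}\to E\to 0$ and the hypothesis $\rH^1(X,\cO_X)=0$ to embed $\rH^1(X^\an,E)$ into $\rH^2(X^\an,\Lambda)$; the image $x$ of the class of the torsor $C$ is then a non-zero $2$-torsion element (non-vanishing being supplied by the Albanese argument preceding the lemma), and $f^\ast$ is identified with $\lambda\mapsto\lambda_\ast x$, which therefore has non-zero image contained in the one-dimensional space $\rH^2(X,\bZ/2\bZ)^t$. You instead work purely topologically: the associated-bundle description $C\cong X'\times_{\bZ/2\bZ}E$ lets you factor $f$ through $\rB(\bZ/2\bZ)$; the computation of $(\rB\iota)^\ast$ reduces to the classical fact that $\bR\bP^\infty\to\bC\bP^\infty$ pulls the degree-two generator back to $a^2=\beta a$ (applied to each circle factor of $E^\an$, which your K\"unneth bookkeeping handles correctly); and naturality of the Bockstein together with the Bockstein exact sequence on $X$ finishes the identification. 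Your route avoids the exponential sequence, the hypothesis $\rH^1(X,\cO_X)=0$, and the previously established non-triviality of the torsor $C$ (your computation re-proves it), using only $\pi_1(X)=\bZ/2\bZ$ and $\rH^2(X^\an,\bZ)_\tors=\bZ/2\bZ$; it also yields the more explicit conclusion that the image is spanned by $\beta(\epsilon)=\epsilon^2$, the square of the class of the double cover. What the paper's route buys is brevity, given the discussion preceding the lemma, and the identification of $f^\ast$ as pairing with the torsor class $x$, a formulation that transfers directly to the relative setting over $Y_1(2n)$ in the theorem that follows; either argument, however, relativizes without difficulty.
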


\begin{proof}
Write $E=\bC/\Lambda$ and consider the corresponding short exact sequence
\[
	0 \to \Lambda \to \cO_{X^\an} \to E \to 0
\]
on $X^\an$. Since $\rH^1(X,\,\cO_X)=0$ the map $\rH^1(X^\an,\, E) \to \rH^2(X^\an,\, \Lambda)$ is injective. Let $x \in \rH^2(X^\an,\,\Lambda)$ be the image of the class of $C$. Then $x$ is a non-zero $2$-torsion element. In particular, the map
\[
	\Hom(\Lambda,\bZ/2\bZ) \overset{x}{\longto} \rH^2(X^\an,\,\bZ/2\bZ)
\]
has image in $\rH^2(X^\an,\,\bZ/2\bZ)^t$ and is non-zero. Since this map is precisely the map $f^\ast$, the lemma follows.
\end{proof}

Now let $n\geq 2$. Then $\Gamma_1(2n)$ is torsion free and the modular curve $Y_1(2n)_\bC$ is a fine moduli scheme. Let $S=X\times Y_1(2n)$ and let $\cE\to S$ be the pull-back of the universal elliptic curve over $Y_1(2n)$. By the level $\Gamma_1(2n)$-structure, the elliptic curve $\cE\to S$ is equipped with a nontrivial section $\xi\colon S\to \cE[2]$. Let $\cC\to S$ be the $\cE$-torsor  trivialized by $X'\times Y_1(2n) \to S$, and on which monodromy acts by translation over the $2$-torsion section $\xi$. The fibers over $Y_1(2n)$ are precisely the curves $C\to X$ constructed above.

\begin{theorem}$c(\cC/S)\neq 0$.
\end{theorem}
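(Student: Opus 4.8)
The plan is to compute the class $c(\cC/S)=d(t)\in\rH^3(S^\an,\bZ/2\bZ)$ directly, exploiting the product structure $S=X\times Y$ with $Y:=Y_1(2n)$ and isolating a single non-zero K\"unneth component. Since $Y$ is an affine curve we have $\rH^j(Y,-)=0$ for $j\geq 2$, and since $\cE$ (hence $\cE[2]$, hence the canonical torsor $T(\cE/S)$ and its class $t$) is pulled back from $Y$, I may write $t=1\boxtimes t_Y$ with $t_Y\in\rH^1(Y,\cE[2])$. The differential $d\colon\rH^1(S,\cE[2])\to\rH^3(S,\bZ/2\bZ)$ in the Leray sequence of the torus torsor $p\colon\cC\to S$ is, by the standard transgression-equals-Euler-class description of the second differential, cup product with the mod $2$ Euler class $e\in\rH^2(S,\cE[2])$ followed by the Weil pairing $\cE[2]\otimes\cE[2]\to\bZ/2\bZ$; thus $c(\cC/S)=\langle t\cup e\rangle$.

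Next I would pin down the only component of $e$ that matters. Pairing $t=1\boxtimes t_Y$ against $e$, the K\"unneth summands of $e$ in $\rH^0(X)\otimes\rH^2(Y,\cE[2])$ and in $\rH^1(X)\otimes\rH^1(Y,\cE[2])$ contribute to $\rH^{\geq 2}(Y,\bZ/2\bZ)=0$, so only the $\rH^2(X)\otimes\rH^0(Y,\cE[2])$-component survives. That component is detected by restricting to a fibre $X\times\{y\}$, where $\cC$ specializes to the Enriques torsor $C\to X$ whose monodromy is translation by $\xi$: here Lemma \ref{lemma:enriques-torsion-class} shows $e|_{X\times\{y\}}=\tau_X\otimes\xi(y)$, with $\tau_X$ the generator of $\rH^2(X,\bZ/2\bZ)^t$. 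Hence the surviving component is $\tau_X\boxtimes\xi$, and carrying out the cup product with the Weil pairing collapses everything to
\[
 c(\cC/S)=\tau_X\boxtimes\langle t_Y,\xi\rangle ,
\]
where $\langle t_Y,\xi\rangle\in\rH^1(Y,\bZ/2\bZ)$ is the image of $t_Y$ under $\cE[2]\xrightarrow{\langle-,\xi\rangle}\bZ/2\bZ$.

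By the K\"unneth theorem over $\bF_2$ it remains to show that both factors are non-zero. As $\tau_X\neq 0$ by construction, the crux is $\langle t_Y,\xi\rangle\neq 0$. I would identify $\rH^1(Y,\bZ/2\bZ)=\rH^1(\Gamma_1(2n),\bZ/2\bZ)=\Hom(\Gamma_1(2n),\bZ/2\bZ)$ (as $Y$ is a $K(\Gamma_1(2n),1)$) and observe that $\gamma\mapsto\langle t_Y(\gamma),\xi\rangle$ is a genuine homomorphism, because $\xi$ is $\Gamma_1(2n)$-invariant and the Weil pairing is $\SL_2\bZ$-equivariant. It therefore suffices to evaluate it on one element. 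I would take $\gamma=\left(\begin{smallmatrix}1&1\\0&1\end{smallmatrix}\right)\in\Gamma_1(2n)$ and trace the canonical torsor construction of \S\ref{subsec:canonical-torsor}, exactly as in the proof of Proposition \ref{prop:torsor-non-trivial}: on the four-element torsor $T$ attached to $\cE[4]$, this $\gamma$ moves a chosen base point by a translation $v\in\cE[2]$ with $\langle v,\xi\rangle=1$, so $\langle t_Y(\gamma),\xi\rangle=1$ and the homomorphism is non-trivial.

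The main obstacle is precisely this last explicit computation: one must carefully follow how $\left(\begin{smallmatrix}1&1\\0&1\end{smallmatrix}\right)$ permutes the four partitions defining $T$ (it induces a four-cycle mixing the linear action on $\cE[2]$ with a translation) and extract the resulting element of $\cE[2]$. One must also check that the value of the homomorphism $\langle t_Y,\xi\rangle$ on $\gamma$ is independent of the chosen base point, which holds because a change of base point alters the cocycle by a coboundary $\gamma\mapsto \gamma v_0-v_0$, and $\langle\gamma v_0-v_0,\xi\rangle=\langle v_0,\gamma^{-1}\xi\rangle-\langle v_0,\xi\rangle=0$ since $\gamma^{-1}\xi=\xi$. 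Once $\langle t_Y,\xi\rangle\neq 0$ is established, combining it with $\tau_X\neq 0$ and the displayed formula gives $c(\cC/S)\neq 0$.
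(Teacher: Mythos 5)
Your proof is correct, but it takes a genuinely different route from the paper's. The paper never computes the Leray differential $d$ explicitly: it uses base-change compatibility to identify $c(\cC/S)$ with the pullback of the universal class (the image of the canonical torsor under the edge map for $J\colon \cM_1\to\cM_{1,1}$), and then deduces non-vanishing from a commutative square comparing the Leray spectral sequences of $J$ and of $j\colon S\to Y_1(2n)$, the key point being injectivity of the pullback $\rH^1(\cM_{1,1},\cE[2])\to\rH^1(Y_1(2n),\,\rR^2 j_\ast\bZ/2\bZ)$. You instead compute $c(\cC/S)=d(t)$ head-on via the formula ``$d_2$ equals cup product with the mod-$2$ Euler class followed by the Weil pairing'', and use the K\"unneth decomposition of $S=X\times Y_1(2n)$ together with $\rH^{\geq 2}(Y_1(2n),-)=0$ to get the product formula $c(\cC/S)=\tau_X\boxtimes\langle t_Y,\xi\rangle$. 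The two arguments then converge on the same core inputs: your non-vanishing of $\langle t_Y,\xi\rangle$, evaluated on the unipotent matrix, is exactly the paper's computation that $\beta_\ast T$ is a non-trivial $\Gamma_1(2n)$-torsor for $\beta=\langle-,\xi\rangle$ the unique non-trivial equivariant map $\cE[2]\to\bZ/2\bZ$ (I checked the four-cycle: the unipotent moves a base point of $T$ by a translation pairing to $1$ against $\xi$, so your claim is right); and your identification $e|_{X\times\{y\}}=\tau_X\otimes\xi(y)$ is the paper's Lemma \ref{lemma:enriques-torsion-class}, plus the small extra observation---worth making explicit---that the class of $C\to X$ is pushed forward along $\xi_\ast$ from the double cover, which is what pins the $E[2]$-factor down to $\xi(y)$ rather than an arbitrary non-zero vector. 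What your route buys is more information (an explicit formula for the class, not merely its non-vanishing) and independence from the earlier Proposition identifying $c$ with the universal class on $\cM_1$. What it costs is the one step you assert without proof: the transgression-equals-Euler-class description of $d_2$ for a torsor under a \emph{family} of tori with non-trivial monodromy on the lattice. That statement is true and standard, but it is precisely what the paper's functoriality argument is designed to avoid, so a careful write-up should either prove it (e.g.\ by reduction to the universal case over $\rB(\Aut\Lambda\ltimes(\Lambda\otimes S^1))$, or to the circle-bundle Gysin sequence) or cite it precisely.
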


\begin{proof}

Consider the diagram
\[
\begin{tikzcd}
 S = X\times Y_1(2n) \arrow{r}{f} \arrow[swap]{dr}{j} & \cM_{1,\Gamma_{1}(2n)} \arrow{d}{J'} \arrow{r} & \cM_1 \arrow{d}{J} \\
 & Y_1(2n) \arrow{r} & \cM_{1,1},
\end{tikzcd}
\]
where $\cM_{1,\Gamma_{1}(2n)}$ is defined as to make the square cartesian. 

As in Lemma \ref{lemma:enriques-torsion-class} we have a rank one subsheaf $(\rR^2 j_\ast\bZ/2\bZ)^t$ of
$\rR^2 j_\ast \bZ/2\bZ$, and an exact sequence 
\[
0 \longto \bZ/2\bZ \longto  \rR^2 J'_\ast\bZ/2\bZ  \longto (\rR^2 j_\ast\bZ/2\bZ)^t \longto 0
\]
of local systems on $Y_1(2n)$. Since the middle term is isomorphic with $\cE[2]$, which has a unique non-trivial $\Gamma_1(2n)$-submodule (generated by $\xi$) this short exact sequence is isomorphic with the short exact sequence
\[
	0 \longto \bZ/2\bZ \overset{\xi}{\longto}
	 \cE[2] \overset{-\wedge \xi}{\longto}
	 \bZ/2\bZ \longto 0.
\]
We claim that the pull-back map
\[
	\rH^1(\cM_{1,1},\, \rR^2 J_\ast \bZ/2\bZ)  \to \rH^1(Y_1(2n),\, \rR^2 j_\ast \bZ/2\bZ),
\]
or equivalently,  that the composite map
\[
	\rH^1(\cM_{1,1},\, \cE[2] ) \to
	\rH^1(Y_1(2n),\, \cE[2]) \overset{-\wedge \xi}{\longto}
	\rH^1(Y_1(2n),\,\bZ/2\bZ)
\]
is injective.

We have $\rR^2 J_\ast \bZ/2\bZ = \cE[2]$, hence
the group $\rH^1(\cM_{1,1},\, \rR^2 J_\ast \bZ/2\bZ)$ has order $2$, generated by the class of the canonical $\cE[2]$-torsor $T=T(\cE/\cM_{1,1})$. 
The image of $T$ in $\rH^1(\Gamma_1(2n),\, E[2])$ is non-trivial, since $T$ has no fixed points under the action of
\begin{equation}\label{eq:upper-triangular}
	\left(\begin{array}{cc}1 & 1 \\0 & 1\end{array}\right) \in \Gamma_1(2n).
\end{equation}
There is a unique non-trivial $\Gamma_1(2n)$-equivariant map $\beta\colon E[2]  \to \bZ/2\bZ$. The $E[2]$-torsor $T$ induces a $\bZ/2\bZ$-torsor $\beta_\ast T$. One computes that the action of the matrix (\ref{eq:upper-triangular}) on $\beta_\ast T$ is non-trivial, and hence the map
\[
	\beta\colon \rH^1( \Gamma_1(2n),\, E[2] ) \to \rH^1( \Gamma_1(2n),\,\bZ/2\bZ )
\]
maps the class of $T$ to a non-zero class. Since any map from $E[2]$ to a $\bZ/2\bZ$-module with trivial $\Gamma_1(2n)$-action factors over the map $\beta$, we conclude that also the map
\[
	\rH^1( \Gamma_1(2n),\, E[2] ) \to \rH^1( \Gamma_1(2n),\, \rH^2(X,\, \bZ/2\bZ) )
\]
induced by  $f^\ast$ maps $T$ to a non-zero class. This proves the claim.

Finally, comparing the Leray spectral sequences for $S\to Y_1(2n)$ and for $\cM_1 \to \cM_{1,1}$ we find a commutative diagram
\[
\begin{tikzcd}
	\rH^1(\cM_{1,1},\, \rR^2 J_\ast \bZ/2\bZ) \arrow[tail]{r} \arrow[tail]{d} & \rH^3(\cM_{1,1},\,\bZ/2\bZ) \arrow{d} \\
	\rH^1(Y_1(2n),\, \rR^2 j_\ast \bZ/2\bZ) \arrow[tail]{r} & \rH^3( S,\,\bZ/2\bZ).
\end{tikzcd}
\]
The class $c(\cC/S) \in \rH^3(S,\,\bZ/2\bZ)$ is the image of the unique non-trivial element of $\rH^1(\cM_{1,1},\, \rR^2 J_\ast \bZ/2\bZ)$, and therefore 
$c(\cC/S)\neq 0$.
\end{proof}

%\begin{proposition}Let $n\geq 4$. Let $\cE \to Y_1(n)_{\bC}$ be the universal elliptic curve with a point of order $n$. Let $X/\bC$ be the nodal curve obtained by pinching $\bP^1$ along $\{0,\infty\}$. Let $S := \cE \times X$ and $C \to S$ be the genus one curve constructed in the proof of Theorem \ref{thm:smooth-vanishing-repeated}. Then $c(C/S) \neq 0$ in $\rH^3(S,\,\bZ/2\bZ)$.
%\end{proposition}
%
%\begin{proof}
% Let $\cM_{1,\Gamma_1(n)}$ be the pullback of $\cM_1$ along $Y_1(n) \to \cM_{1,1}$. That is, $\cM_{1,\Gamma_1{(n)}}$ parametrizes pairs $(C,P)$ of a genus one curve $C$ and a section $P$ of order $n$ in the Jacobian of $C/S$. We have $\rH^2(Y_1(n),\bF_2) = 0$. Considering the Leray spectral sequence for the forgetful map $Y_1(n) \to \cM_{1,\Gamma_1(n)}$ we find with the same argument as above that the map
%\[
%	\rH^1( \cM_{1,\Gamma_1(n)}^\an,\, \cE[2] ) \overset{d}{\longto} \rH^3( \cM_{1,\Gamma_1(n)}^\an,\, \bZ/2\bZ )
%\]
%(analogous to (\ref{eq:E-2-diff})) is injective. Also, the $\cE[2]$-torsor $T$ is non-trivial over $Y_1(n)$ (and over $\cM_{1,\Gamma_1{(n)}}$) since it has no fixed points under the monodromy action of
%\[
%	\left(\begin{array}{cc}1 & 1 \\0 & 1\end{array}\right) \in \Gamma_1(n).
%\]	
%This implies that the pullback map
%\[
%	\rH^3(\cM_1^\an,\,\bZ/2\bZ)^\dagger \to \rH^3(\cM_{1,\Gamma_1(n)}^\an,\,\bZ/2\bZ)
%\]
%is injective. Bla: end of proof, refer to Lemma in $\bQ$-section. Do it with integral coefficients.
%\end{proof}
%

We end by proving that $c(C/K)=0$ if $K$ is a number field.

\begin{proposition}Let $C\to \Spec \bR$ be a curve of genus one. Then $c(C/\bR)=0$.
\end{proposition}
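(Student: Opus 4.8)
The target group is $\rH^3(\Spec\bR,\bZ/2\bZ)=\rH^3(G,\bZ/2\bZ)\cong\bZ/2\bZ$, where $G=\Gal(\bC/\bR)$ acts trivially on the coefficients. Since $\Spec\bR$ has a single geometric point, the Leray spectral sequence for $p\colon C\to\Spec\bR$ is the Hochschild--Serre spectral sequence $E_2^{a,b}=\rH^a(G,\rH^b(C_\bC,\bZ/2\bZ))\Rightarrow \rH^{a+b}(C,\bZ/2\bZ)$, and $c(C/\bR)=d_2(t)$ is the image of the canonical torsor class $t\in E_2^{1,1}=\rH^1(G,E[2])$ under $d_2\colon E_2^{1,1}\to E_2^{3,0}=\rH^3(G,\bZ/2\bZ)$. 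If $C(\bR)\neq\emptyset$ then $C$ is an elliptic curve and $c(C/\bR)=0$, since $c$ vanishes on curves with a section; so I may assume $C(\bR)=\emptyset$, equivalently that $[C]$ is the nontrivial class of $\rH^1(\bR,E)$.

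The plan is to split according to the $G$-module structure of $E[2]$. Complex conjugation $\sigma$ acts on $E[2]=(\bZ/2\bZ)^2$ through $\GL_2(\bF_2)$, with two possibilities. If $\sigma$ acts nontrivially (equivalently, not all of the $2$-torsion is real, i.e. $\Delta<0$), then $E[2]$ is the regular module $\bF_2[G]$, so by Shapiro's lemma $\rH^1(G,E[2])=0$; hence $t=0$ and $c(C/\bR)=0$. (In this case one also has $\rH^1(\bR,E)=0$, so $C$ automatically has a real point.) There remains the case where $\sigma$ acts trivially on $E[2]$, i.e. all three nonzero $2$-torsion points are real and $E(\bR)$ has two connected components; here $\rH^1(G,E[2])\cong(\bZ/2\bZ)^2$ and the argument cannot be purely formal.

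In this remaining case I would compute $d_2$ explicitly. Writing $u$ for the generator of $\rH^\bullet(G,\bZ/2\bZ)=\bF_2[u]$ and using K\"unneth (trivial action), the map $d_2\colon \rH^1(G,E[2])\to\rH^3(G,\bZ/2\bZ)$ is given by cup product with the Euler class $e\in\rH^2(G,E[2])$ of the fibration $p$, composed with the fibrewise cup product $\rH^1(C_\bC)\otimes\rH^1(C_\bC)\to\rH^2(C_\bC)=\bZ/2\bZ$; under $\rH^1(C_\bC,\bZ/2\bZ)\cong E[2]$ the latter is the Weil pairing, which is \emph{alternating}. Concretely $e$ is the image of $[C]\in\rH^1(\bR,E)$ under the connecting map of $0\to E[2]\to E\overset{2}{\to}E\to 0$, and one checks this map is injective here (because multiplication by $2$ kills $\rH^1(\bR,E)\cong\bZ/2\bZ$), so $e=u^2\otimes\eta$ with $\eta\in E[2]$ nonzero; writing $t=u\otimes\tau$, one obtains $c(C/\bR)=\langle\tau,\eta\rangle\,u^3$. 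It therefore suffices to prove $\langle\tau,\eta\rangle=0$, i.e. that the two distinguished $2$-torsion directions $\tau$ (coming from the canonical $E[4]$-torsor $T(E/\bR)$) and $\eta$ (coming from the obstruction class of $C$) are linearly dependent in the symplectic plane $E[2]$.

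The main obstacle is precisely this last vanishing. It requires unwinding the $E[4]$-construction of \S\ref{subsec:canonical-torsor} for a real elliptic curve with $\sigma\equiv 1\bmod 2$ in order to pin down $\tau$, and identifying $\eta$ in terms of the real structure. I expect both classes to be forced onto the single line $E(\bR)^0[2]$ cut out by the identity component (or simply to coincide); since $E[2]$ is two-dimensional, once $\tau$ and $\eta$ are seen to span at most a line, the alternating property of the Weil pairing gives $\langle\tau,\eta\rangle=0$ and hence $c(C/\bR)=0$. Finally I would note that this real computation is exactly what is needed for the number field statement: for a number field $K$ the group $\rH^3(K,\bZ/2\bZ)$ injects, by Tate--Poitou duality, into $\bigoplus_{v\text{ real}}\rH^3(\bR,\bZ/2\bZ)$, so vanishing over every real place yields $c(C/K)=0$.
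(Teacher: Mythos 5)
Your first two reductions are correct and parallel the paper's opening move: if $C(\bR)\neq\emptyset$ then $c(C/\bR)=0$ since $c$ kills curves with a section, and if conjugation acts nontrivially on $E[2]$ then $E[2]\cong\bF_2[\Gal_\bR]$ is an induced module, so $\rH^1(\bR,E[2])=0$ and $t=0$. (The paper reaches the same two cases from the two conjugacy classes of the conjugation action on $\rH_1(E(\bC),\bZ)$.) The genuine gap is exactly where you flag it: in the remaining case ($E[2]$ entirely real, $C$ the nontrivial torsor, so $C(\bR)=\emptyset$) you have reduced the proposition to the vanishing $\langle\tau,\eta\rangle=0$, and this vanishing --- which is the entire content of the statement in the only nontrivial case --- is left as an expectation. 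As written, the proposal is not a proof.

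Worse, the expected collinearity fails, so the step would not merely remain open: it is false. Normalize $E(\bC)=\bC/\Lambda$, $\Lambda=\bZ+i\bZ$, conjugation $z\mapsto\bar z$; the nontrivial torsor is $C(\bC)=\bC/\Lambda$ with twisted conjugation $z\mapsto\bar z+\frac12$, a free involution with quotient a Klein bottle, so for $\bZ/2$-coefficients the Leray spectral sequence of $C\to\Spec\bR$ is the Lyndon--Hochschild--Serre spectral sequence of $1\to\Lambda\to\pi_1(K)\to\Gal_\bR\to 1$. One checks your formula against it: $d$ has one-dimensional kernel, spanned by the class corresponding to $\frac12$ (the unique $2$-torsion bundle represented by a conjugation-invariant divisor), and $\eta=\delta[C]=\frac12$. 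On the other hand, unwinding \S\ref{subsec:canonical-torsor}: conjugation acts on $E[4]$ by $(a,b)\mapsto(a,-b)$, hence fixes each of the four elements of $M^\ast/\langle-1\rangle$ lying over $\frac12$ and over $\frac i2$, and swaps the two elements lying over $\frac{1+i}{2}$; the unique element of $E[2]$ whose translation action on $T$ realizes this permutation is $\frac{1+i}{2}$. So $\tau=\frac{1+i}{2}$: the classes $\tau$ and $\eta$ span $E[2]$, $\langle\tau,\eta\rangle\neq 0$, and your formula yields $c(C/\bR)\neq0$. Carried to completion, your route contradicts the proposition rather than proving it.

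This should be confronted with the paper's own argument, which is entirely different and incompatible with yours: the paper shows the \emph{whole} differential $d$ vanishes, by asserting that $\rH^1(C_\et,\mu_2)\surjto(\Pic C)[2]$ and that $(\Pic C)[2]\to E[2](\bR)$ is an isomorphism, whence $\alpha=0$ and then $d=x\cup\alpha(-)=0$. If $d$ were identically zero, nondegeneracy of the Weil pairing in your formula would force $\eta=0$, which you correctly rule out (multiplication by $2$ kills $\rH^1(\bR,E)$). The two arguments cannot both be right, and the suspect step is the paper's isomorphism: when $C(\bR)=\emptyset$ the Brauer obstruction intervenes, $\Pic^0(C)$ is only the identity component $E(\bR)^0\subsetneq E(\bR)=\Pic^0_{C/\bR}(\bR)$, and $(\Pic C)[2]$ has order $2$, not $4$ (indeed $\rH^1(C_\et,\bZ/2\bZ)$ is the cohomology of a Klein bottle and cannot surject onto $(\bZ/2\bZ)^2$). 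So your approach, once made explicit, is valuable --- but what it uncovers is a defect in the proposition and its proof (and hence in the number-field corollary resting on it), not a proof.
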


\begin{proof}
Let $E/\bR$ be the Jacobian of $C$. The action of complex conjugation on $\rH_1(E(\bC),\bZ)$ is, on a suitable basis, given by one of the following two matrices
\[
	\left(\begin{array}{cc}0 & 1 \\1 & 0\end{array}\right), \quad
	\left(\begin{array}{cc}1 & 0 \\0 & -1\end{array}\right).
\]
In the former case one has $\rH^1(\bR, E)=0$ and in the latter case $\rH^1(\bR, E )=\bZ/2\bZ$. So we may assume we are in the second case. We then have $E[2](\bR) \cong (\bZ/2\bZ)^2$, and $E(\bR)$ has two connected components.

The Leray spectral sequence for $C\to S:=\Spec \bR$ gives an exact sequence
\[
	\rH^1(S_\et, \bZ/2\bZ) \longto \rH^1(C_\et, \bZ/2\bZ ) \longto \rH^0(S_\et, E[2] ) \overset{\alpha}{\longto} \rH^2(S_\et, \bZ/2\bZ ).
\]
The middle map factors as 
\[
	\rH^1(C_\et, \bZ/2\bZ ) = \rH^1(C_\et, \mu_2) \longto
	(\Pic C)[2] \longto E[2](\bR) = \rH^0(S_\et, E[2] ).
\]
The first map is surjective, and the second an isomorphism, so we see that the composite is surjective and hence $\alpha=0$.

Recall that the cohomology ring $\rH^\bullet(\Gal_\bR,\,\bZ/2\bZ)$ is isomorhic to $\bF_2[x,y]/(x^2)$ with $\deg x=1$ and $\deg y=2$.  Consider the square
\[
\begin{tikzcd}
\rH^0(S_\et, E[2] ) \arrow{r}{\alpha} \arrow{d}{\cdot x} 
	& \rH^2(S_\et, \bZ/2\bZ ) \arrow{d}{\cdot x} \\
\rH^1(S_\et, E[2] ) \arrow{r}{d} & \rH^3(S_\et, \bZ/2\bZ ),
\end{tikzcd}
\]
where $d$ is the map $E_2^{1,1} \to E_2^{1,3}$ from the Leray spectral sequence. Since this spectral sequence is compatible with cup product, the square commutes. Since $E[2]\cong (\bZ/2\bZ)^2$ the left map is surjective, and since $\alpha=0$ it follows that the bottom map $d$ vanishes.  We see that the image of the canonical torsor under $d$ vanishes and hence $c(C/\bR)=0$.
\end{proof}

\begin{corollary}Let $K$ be a number field and $C/K$ a curve of genus one. Then $c(C/K)=0$.
\end{corollary}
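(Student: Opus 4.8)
The plan is to reduce the global statement to the real case already settled in the previous Proposition, by means of a local–global principle for the Galois cohomology of $K$. First I would observe that, since $K$ has characteristic zero, $\Spec K$ is a scheme over $\bZ[\frac12]$, so the class $c(C/K)$ is defined and lives in $\rH^3(\Spec K_\et,\,\bZ/2\bZ) = \rH^3(\Gal_K,\,\bZ/2\bZ)$. For each place $v$ of $K$, the base-change compatibility of $c$ (already recorded) identifies the image of $c(C/K)$ under the localization map $\rH^3(\Gal_K,\,\bZ/2\bZ) \to \rH^3(\Gal_{K_v},\,\bZ/2\bZ)$ with $c(C_{K_v}/K_v)$, where $C_{K_v}$ is the genus one curve obtained by base change to the completion.

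The key arithmetic input I would invoke is the statement from Poitou--Tate duality that, for a number field $K$ and a finite module, the localization map in degrees $\geq 3$ identifies global cohomology with the sum of the local contributions at the \emph{real} places:
\[
	\rH^3(\Gal_K,\,\bZ/2\bZ) \longisomto \bigoplus_{v\text{ real}} \rH^3(\Gal_{K_v},\,\bZ/2\bZ).
\]
Here the non-archimedean places drop out because a $p$-adic field has cohomological dimension $2$, so $\rH^3(\Gal_{K_v},\,\bZ/2\bZ)=0$, and the complex places drop out since $\rH^{>0}(\Gal_\bC,\,-)=0$; only the real places survive. It is really only the \emph{injectivity} of this localization map that I need.

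With that in hand the argument concludes formally. For each real place $v$ the completion $K_v$ is $\bR$, the base change $C_{K_v}$ is a curve of genus one over $\bR$, and the preceding Proposition gives $c(C_{K_v}/K_v)=0$. By base-change compatibility every real localization of $c(C/K)$ vanishes, and by injectivity of the localization map it follows that $c(C/K)=0$. The main obstacle is simply locating and correctly applying the relevant duality statement, namely that $\rH^3(\Gal_K,\,\bZ/2\bZ)$ injects into the product of its real-place localizations (equivalently, that $\Gal_K$ has strict cohomological dimension $2$ away from the $2$-torsion carried by the real places). Everything else---the identification of étale with Galois cohomology, the base-change compatibility of $c$, and the vanishing over $\bR$---is either formal or already established.
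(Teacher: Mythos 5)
Your proposal is correct and follows essentially the same route as the paper: the paper's proof is exactly the citation of Tate's duality theorem (Tate, ICM 1962, Thm.~3.1.c) that the localization map $\rH^3(\Gal_K,\bZ/2\bZ)\to\prod_{\sigma\colon K\injto\bR}\rH^3(\Gal_\bR,\bZ/2\bZ)$ is an isomorphism, combined with the base-change compatibility of $c$ and the preceding Proposition's vanishing over $\bR$. Your additional remarks (cohomological dimension $2$ of non-archimedean local fields, vanishing at complex places) correctly explain why only the real places appear, but they are subsumed in the cited theorem.
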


\begin{proof}
By \cite[Thm~3.1.c]{Tate63}, the natural map
\[
	\rH^3(\Gal_K,\, \bZ/2\bZ ) \longto \!\! 
	\prod_{\sigma\colon K \injto \bR} \!\!\! \rH^3(\Gal_\bR,\, \bZ/2\bZ)
\]
is an isomorphism.
\end{proof}

\end{document}